\DeclareMathAlphabet{\mathbbold}{U}{bbold}{m}{n}
\newcommand{\cmr}[1]{{\color{red}{#1}}}
\newcommand{\stiny}[1]{{\scalebox{.5}{#1}}}
\newcommand{\smtiny}[1]{{\scalebox{.63}{#1}}}
\newcommand{\argmin}{{\mathrm{argmin}}}
\newcommand*{\minOp}{\operatornamewithlimits{min}\limits}
\newcommand*{\supOp}{\operatornamewithlimits{sup}\limits}
\newcommand*{\sumOp}{\operatornamewithlimits{\sum}\limits}
\newcommand*{\limOp}{\operatornamewithlimits{lim}\limits}
\newcommand{\tolimOp}{\operatornamewithlimits{\longrightarrow}\limits}
\newcommand{\tr}{\mathsf{\scriptscriptstyle T}}
\newcommand{\zero}{\mathbf{0}}
\newcommand{\one}{\mathbf{1}}
\newcommand{\eye}{\mathbb{I}}
\newcommand{\eye}{\mathbf{I}}
\newcommand{\vc}[1]{{ \mathrm{#1} }}
\newcommand{\mx}[1]{{ \mathrm{#1} }}
\newcommand{\drm}{\mathrm{d}}
\newcommand{\linspan}{\mathrm{span}} 
\newcommand{\proj}{\mathrm{proj}} 
\newcommand{\norm}[1]{\|#1\|}
\newcommand{\inner}[2]{{ \langle {#1,#2} \rangle}}
\newcommand{\imag}[1]{\mathrm{imag}(#1)}
\newcommand{\real}[1]{\mathrm{real}(#1)}
\newcommand{\ellone}{\ell^{1}}
\newcommand{\ellinfty}{\ell^{\infty}}
\newcommand{\Lone}{L^{1}}
\newcommand{\Linfty}{L^{\infty}}
\newcommand{\Lscrone}{\Lscr^{1}}
\newcommand{\Lscrinfty}{\Lscr^{\infty}}
\newcommand{\Lscrp}{\Lscr^{p}}
\newcommand{\Ascr}{{\mathscr{A}}}
\newcommand{\Bscr}{{\mathscr{B}}}
\newcommand{\Dscr}{{\mathscr{D}}}
\newcommand{\Fscr}{{\mathscr{F}}}
\newcommand{\Gscr}{{\mathscr{G}}}
\newcommand{\Hscr}{{\mathscr{H}}}
\newcommand{\Lscr}{{\mathscr{L}}}
\newcommand{\Pscr}{{\mathscr{P}}}
\newcommand{\Tscr}{{\mathscr{T}}}
\newcommand{\Uscr}{{\mathscr{U}}}
\newcommand{\Vscr}{{\mathscr{V}}}
\newcommand{\Wscr}{{\mathscr{W}}}
\newcommand{\Xscr}{{\mathscr{X}}}
\newcommand{\Ccal}{{\mathcal{C}}}
\newcommand{\Ecal}{{\mathcal{E}}}
\newcommand{\Fcal}{{\mathcal{F}}}
\newcommand{\Gcal}{{\mathcal{G}}}
\newcommand{\Hcal}{{\mathcal{H}}}
\newcommand{\Ical}{{\mathcal{I}}}
\newcommand{\Jcal}{{\mathcal{J}}}
\newcommand{\Lcal}{{\mathcal{L}}}
\newcommand{\Ncal}{{\mathcal{N}}}
\newcommand{\Rcal}{{\mathcal{R}}}
\newcommand{\Scal}{{\mathcal{S}}}
\newcommand{\Ycal}{{\mathcal{Y}}}
\newcommand{\Cbb}{{\mathbb{C}}}
\newcommand{\Nbb}{{\mathbb{N}}}
\newcommand{\Rbb}{{\mathbb{R}}}
\newcommand{\Sbb}{{\mathbb{S}}}
\newcommand{\Tbb}{{\mathbb{T}}}
\newcommand{\Xbb}{{\mathbb{X}}}
\newcommand{\Ybb}{{\mathbb{Y}}}
\newcommand{\Zbb}{{\mathbb{Z}}}
\newcommand{\bbh}{\mathds{h}}
\newcommand{\bbk}{\mathbbm{k}}
\newtheorem{theorem}{Theorem}
\newtheorem*{theorem*}{Theorem}
\newtheorem{definition}{Definition}
\newtheorem*{definition*}{Definition}
\newtheorem{assumption}{Assumption}
\newtheorem{proposition}[theorem]{Proposition}
\newtheorem{corollary}[theorem]{Corollary}
\newtheorem{lemma}[theorem]{Lemma}
\newtheorem{remark}{Remark}
\newtheorem{example}{Example}
\newtheorem*{example*}{Example}
\newtheorem*{claim*}{Claim}
\newtheorem{problem}{Problem}
\newtheorem*{problem*}{Problem}
\newcommand\xqed[1]{%
	\leavevmode\unskip\penalty9999 \hbox{}\nobreak\hfill
	\quad\hbox{#1}}
\newcommand{\nD}{n_{\stiny{$\!\Dscr$}}}
\newcommand{\nS}{n_{\text{s}}}
\newcommand{\Lu}[1]{\mx{L}^{\!\vc{u}}_{#1}}
\newcommand{\gS}{\vc{g}^{\stiny{$(\Scal)$}}}
\newcommand{\gtS}{{g}^{\stiny{$(\Scal)$}}}
\newcommand{\GS}{G^{\stiny{$(\Scal)$}} }
\newcommand{\GStilde}{G^{\stiny{$(\tilde{\Scal})$}} }
\newcommand{\GSbar}{G^{\stiny{$(\overline{\Scal})$}} }
\newcommand{\gSbar}{\vc{g}^{\stiny{$(\overline{\Scal})$}}}
\newcommand{\GSDelta}{G^{\stiny{$(\Delta\Scal)$}} }
\newcommand{\gSDelta}{\vc{g}^{\stiny{$(\Delta\Scal)$}}}
\newcommand{\geps}{\vc{g}^{(\epsilon)}}
\newcommand{\xeps}{\vc{x}^{(\epsilon)}}
\newcommand{\veps}{\vc{v}^{(\epsilon)}}
\newcommand{\Gstareps}{G^{(\star,\epsilon)}}
\newcommand{\gstareps}{\vc{g}^{(\star,\epsilon)}}
\newcommand{\xstareps}{\vc{x}^{(\star,\epsilon)}}
\newcommand{\vstareps}{\vc{v}^{(\star,\epsilon)}}
\newcommand{\gstar}{\vc{g}^{\star}}
\newcommand{\xstar}{\vc{x}^{\star}}
\newcommand{\vstar}{\vc{v}^{\star}}
\newcommand{\gstart}[1]{g_{#1}^{\star}}
\newcommand{\gstarepst}[1]{g_{#1}^{(\star,\epsilon)}}
\newcommand{\gSproj}{\vc{g}^{\stiny{$(\Scal)$}}_{\perp}}
\newcommand{\vcg}{\vc{g}}
\newcommand{\vcu}{\vc{u}}
\newcommand{\vcv}{\vc{v}}
\newcommand{\vcx}{\vc{x}}
\newcommand{\vcy}{\vc{y}}
\newcommand{\mxA}{\mx{A}}
\newcommand{\vcq}{\vc{q}}
\newcommand{\vca}{\vc{a}}
\newcommand{\vcb}{\vc{b}}
\newcommand{\vcc}{\vc{c}}
\newcommand{\mgstareps}{m_{\gstareps}}
\newcommand{\gone}{g^{\stiny{$(1)$}} }
\newcommand{\gtwo}{g^{\stiny{$(2)$}} }
\newcommand{\vcgone}{\vc{g}^{\stiny{$(1)$}} }
\newcommand{\vcgtwo}{\vc{g}^{\stiny{$(2)$}} }
\newcommand{\kernel}{\bbk}
\newcommand{\Hk}{\Hscr_{\bbk}}
\newcommand{\Vk}{\Vscr_{\bbk}}
\newcommand{\phiu}[1]{\varphi_{#1}^{\text{\rm{(u)}}}}
\newcommand{\expe}{\mathrm{e}}
\newcommand{\Jimage}{\mathrm{j}}
\newcommand{\Fw}{\Fcal_\omega}
\newcommand{\Fwr}{\Fcal_\omega^{\text{\rm{(r)}}}}
\newcommand{\Fwi}{\Fcal_\omega^{\text{\rm{(i)}}}}
\newcommand{\Fwrj}[1]{\Fcal_{\omega_{#1}}^{\text{\rm{(r)}}}}
\newcommand{\Fwij}[1]{\Fcal_{\omega_{#1}}^{\text{\rm{(i)}}}}
\newcommand{\phiwr}{\varphi_\omega^{\text{\rm{(r)}}}}
\newcommand{\phiwi}{\varphi_\omega^{\text{\rm{(i)}}}}
\newcommand{\phiwrk}[1]{\varphi_{\omega,#1}^{\text{\rm{(r)}}}}
\newcommand{\phiwik}[1]{\varphi_{\omega,#1}^{\text{\rm{(i)}}}}
\newcommand{\phiwrone}{\varphi_{\omega_1}^{\text{\rm{(r)}}}}
\newcommand{\phiwrtwo}{\varphi_{\omega_2}^{\text{\rm{(r)}}}}
\newcommand{\phiwione}{\varphi_{\omega_1}^{\text{\rm{(i)}}}}
\newcommand{\phiwitwo}{\varphi_{\omega_2}^{\text{\rm{(i)}}}}
\newcommand{\phir}[1]{\varphi_{#1}^{\text{\rm{(r)}}}}
\newcommand{\phii}[1]{\varphi_{#1}^{\text{\rm{(i)}}}}
\newcommand{\mg}{m_{\vc{g}}}
\newcommand{\mgr}{m_{\vc{g}}^{(\mathrm{r})}}
\newcommand{\mgi}{m_{\vc{g}}^{(\mathrm{i})}}
\newcommand{\ddomega}{\frac{\mathrm{d}}{\mathrm{d}\omega}}
\newcommand{\domega}{\mathrm{d}\omega}
\newcommand{\wmax}{\omega_{\mathrm{max}}}
\newcommand{\mesh}{\mathrm{mesh}}
\newcommand{\nP}{n_{\stiny{$\!\Pscr$}}}
\newcommand{\Iu}{\Ical^{\smtiny{$(\mathrm{u})$}}}
\newcommand{\IP}{\Ical^{\smtiny{$(\mathscr{P})$}}}
\newcommand{\Iall}{\Ical}
\newcommand{\Pscrbar}{\overline{\Pscr}}
\newcommand{\nPbar}
{n_{{\stiny{$\overline{\Pscr}$}}}}
\newcommand{\gstarepsbar}{\overline{\vc{g}}^{(\star,\epsilon)}}
\newcommand{\omegabar}{\overline{\omega}}
\newcommand{\IPbar}{\Ical^{\smtiny{$(\overline{\mathscr{P}})$}}}
\newcommand{\mbar}{\overline{m}}
\newcommand{\Phibar}{\overline{\Phi}}
\newcommand{\phibar}{\overline{\varphi}}
\newcommand{\vcabar}{\overline{\vc{a}}}
\newcommand{\vcbbar}{\overline{\vc{b}}}
\newcommand{\vccbar}{\overline{\vc{c}}}
\newcommand{\FDIsub}{\text{FDI}_{\text{sub}}}
\newcommand{\FDIRKHS}{\text{FDI}_{\text{RKHS}}}
\newcommand{\alphadc}{\alpha_{\text{dc}}}
\newcommand{\FBsub}{\text{FB}_{\text{sub}}}
\newcommand{\FBRKHS}{\text{FB}_{\text{RKHS}}}
\title{\LARGE Kernel-Based Identification with Frequency Domain Side-Information}
\author{M.~Khosravi and R.~S.~Smith
\thanks{Mohammad Khosravi is with  Delft Center for Systems and Control, Delft University of Technology,
	Delft, The Netherlands (email: Mohammad.khosravi@tudelft.nl).}
\thanks{Roy~S.~Smith is with with Automatic Control Laboratory, ETH Z\"urich, Switzerland (email: rsmith@control.ee.ethz.ch).}
}
\begin{document}
\maketitle
\begin{abstract}
In this paper, we discuss the problem of system identification when frequency domain side information is available on the system. Initially, we consider the case where the prior knowledge is provided as being the $\Hcal_{\infty}$-norm of the system bounded by a given scalar. This framework provides the opportunity of considering various forms of side information such as the dissipativity of the system as well as other forms of frequency domain prior knowledge. We propose a nonparametric identification method for estimating the impulse response of the system under the given side information. The estimation problem is formulated as an optimization in a reproducing kernel Hilbert space (RKHS) endowed with a stable kernel. The corresponding objective function consists of a term for minimizing the fitting error, and a regularization term defined based on the norm of the impulse response in the employed RKHS. To guarantee the desired frequency domain features defined based on the prior knowledge, suitable constraints are imposed on the estimation problem. The resulting optimization has an infinite-dimensional feasible set with an infinite number of constraints. We show that this problem is a well-defined convex program with a unique solution. We propose a heuristic that tightly approximates this unique solution. The proposed approach is equivalent to solving a finite-dimensional convex quadratically constrained quadratic program. The efficiency of the discussed method is verified by several numerical examples.	
\end{abstract}

\section{Introduction}\label{sec:introduction}
System identification problem, initially introduced in \cite{zadeh1956identification}, deals with the theory and techniques for estimating suitable mathematical models describing dynamical systems using measurement data. The topic has received substantial attention according to its broad applicability in numerous phenomena in science and technology \cite{LjungBooK2, ljung2010perspectives, schoukens2019nonlinear}. In many situations, identifying a dynamical system is beyond fitting a mathematical model to the input-output measurement data. More precisely, we may additionally need to integrate a specific attribute or a known feature of the system into the model. This side information is possibly provided from our general understanding of the behavior of the system based on its inherent physical nature, or acquired from qualitative characteristics and phenomena observed from historical or experimental data.
For example, in the identification of nonlinear dynamics  various forms of side information such as stability, region of attraction, dissipativity and many other ones are incorporated \cite{umenberger2018specialized,khosravi2021ROA,hara2019dissipativityKoopman,khosravi2021grad,ahmadi2020learning,khosravi2022Koopman}.

The incorporation of side information has been considered in the identification of linear dynamics, e.g., the low complexity of the model is imposed by considering sparsity promoting regularizations \cite{khosravi2020low, khosravi2020regularized,shah2012linear,pillonetto2016AtomicNuclearKernel,smith2014frequency}.
For penalizing the order of systems, the rank and the nuclear norm of the corresponding Hankel matrix are utilized in  \cite{smith2014frequency, pillonetto2016AtomicNuclearKernel,fazel2013hankel, mohan2010reweighted}. 
For the same purpose, the notion of  atomic transfer functions and regularization based on the atomic norm are employed \cite{shah2012linear,khosravi2020low}.
Identification with the prior knowledge involving positivity features of the system such as  compartmental structure and the internal or external positivity are discussed as well in the literature   \cite{khosravi2019positive,grussler2017identification,umenberger2016scalable, de2002identification, zheng2018positive, benvenuti2002model}.
Other forms of side information are studied in  \cite{goethals2003identification, hoagg2004first, okada1996subspace, miller2013subspace, inoue2019subspace, abe2016subspace}, e.g., including information on the location of the eigenvalues is discussed in \cite{okada1996subspace, miller2013subspace}, 
positive-realness is studied in \cite{goethals2003identification, hoagg2004first}, and
utilizing prior knowledge on the moments of the transfer function is discussed in \cite{inoue2019subspace}.
The subspace identification method is employed to include information on the steady-state behavior \cite{alenany2011improved, yoshimura2019system, lacy2003subspace} such as the  stability of the system \cite{lacy2003subspace}. %

Starting from the seminal work Pillonetto and De Nicolao \cite{pillonetto2010new}, a paradigm shift known as \emph{kernel-based} approach has been emerged in system identification and also the idea of integrating prior knowledge \cite{ljung2020shift,pillonetto2014kernel,chiuso2019system,khosravi2021robust}. 
In the proposed framework, the identification problem is formulated as a regularized regression in a reproducing kernel Hilbert space (RKHS) \cite{aronszajn1950theory} where
the regularization term is defined based on the norm of RKHS to penalize the feasible solutions not following the prior knowledge. 
Indeed, by suitable choice of the kernel function or imposing appropriate constraints in the regression problem, one can incorporate various prior knowledge such as the stability of the system, the resonant frequencies, gain, the smoothness of the impulse response, and internal or external positivity of the system \cite{chen2018kernel,khosravi2021SSG, marconato2016filter,khosravi2019positive,zheng2018positive,khosravi2021POS}.
Moreover, employing a Tikhonov-like regularization in this framework leads in improvement of the bias-variance trade-off \cite{pillonetto2014kernel}.

Input-output behavioral properties such as the the $\Hcal_\infty$-norm of the plant, or more generally frequency domain properties like the dissipativity of the system \cite{willems1972dissipative}, can be strongly useful in feedback controller design \cite{zames1966input, brogliato2007dissipative}.
These features are a priori known for many systems due to their inherent physical nature, e.g., the electrical circuits where the energy is dissipated by the resistors 
\cite{willems1972dissipative}.
Also, they can be verified using recently developed data-driven methods 
\cite{muller2017stochastic, romer2017determining, romer2019one, koch2019sampling, koch2020provably}.
Accordingly, knowledge on these valuable attributes are potentially available or can be extracted to be used later as side information.
However, in the existing research studies on the identification of LTI systems with prior knowledge \cite{abe2016subspace}, the question on efficient integration of the aforementioned frequency domain side information has not suitably addressed.   
For example, in the standard implementation of subspace and kernel-based identification methods, the information on the bound of system's $\Hcal_{\infty}$-norm is not encoded in the identified model (see the example in Section \ref{sec:problem_statement}).

The main goal of this paper is to develop identification methods utilizing frequency domain side information in a numerically tractable fashion and also providing suitable theoretical guarantees. First, we consider the case where the prior knowledge is provided as being the $\Hcal_{\infty}$-norm of the system bounded by a given scalar. This framework provides the opportunity of considering various forms of side information such as the dissipativity of the system as well as other types of frequency domain prior knowledge like the DC-gain of the system.  We propose a nonparametric identification method for estimating the impulse response of the system under the given side information. Since kernel-based approach provides a powerful framework, the identification problem is formulated as a constrained regularized optimization problem regression in an RKHS endowed with a stable kernel \cite{pillonetto2014kernel,chen2018stability,khosravi2022Lut}.  The objective function of the optimization problem consists of a term for minimizing the fitting error, and a regularization term defined based on the norm of the impulse response in the employed RKHS. To guarantee the desired frequency domain prior knowledge, suitable constraints are imposed on the estimation problem.  Accordingly, we obtain a regularized optimization problem in an infinite-dimensional space with an infinite number of constraints. Following this, we show that this problem is a convex program that attains a unique solution, and consequently, it is well-posed. Then, towards deriving a tractable scheme, we consider a suitable finite set of frequencies, and subsequently, a new optimization is formulated as an approximate estimation problem with the same objective function but with constraints defined on the given finite set. We show that the new problem attains a unique solution with an especial parametric form. Subsequently, an equivalent finite-dimensional convex optimization is derived as a convex quadratically constrained quadratic program (QCQP). More precisely, by solving this optimization problem, the coefficients in the parametric form are estimated, and hence, the solution of the approximate problem is obtained. We derive suitable bounds on the tightness of this approximation. Moreover, we provide theoretical guarantees on the convergence of the approximate solution to the solution of the original problem. The efficiency of the discussed method is verified by several numerical examples.

\section{Notations and Preliminaries}
The set of natural numbers, the set of non-negative integers, the set of real numbers, the set of non-negative real numbers, the set of complex numbers, the $n$-dimensional Euclidean space, the space of $n$ by $m$ real matrices, and  the space of $n$ by $n$ real symmetric matrices are denoted by $\Nbb$, $\Zbb_+$,  $\Rbb$, $\Rbb_+$, $\Cbb$, $\Rbb^n$,  $\Rbb^{n\times m}$, and $\Sbb^n$, respectively.
For any $z\in\Cbb$, the real and imaginary part of $z$ are denoted by $\real{z}$ and $\imag{z}$, respectively. 
The inner product and norm of Hilbert space $\Hscr$ is denoted by $\inner{\cdot}{\cdot}_{\Hscr}$ and $\norm{\cdot}_{\Hscr}$, respectively, and when it is clear from the context, we drop the subscript.
To handle discrete and continuous time in the same formulation, 
$\Tbb$ denotes either $\Zbb_+$ or $\Rbb_+$, and $\Tbb_{\pm}$ is the set of scalars $t$ where either $t\in\Tbb$ or $-t\in\Tbb$. 
Given measure space $\Xscr$, the space of measurable functions $g:\Xscr\to \Rbb$ is denoted by $\Rbb^{\Xscr}$.
The element $\vc{u}\in\Rbb^{\Xscr}$ is shown entry-wise as $\vcu=(u_x)_{x\in\Xscr}$, or equivalently as $\vcu=\big(u(x)\big)_{x\in\Xscr}$.
Depending on the context of discussion, $\Lscrinfty$ refers either to $\ellinfty(\Zbb)$ or $\Linfty(\Rbb)$. 
Similarly, $\Lscrone$ is either $\ellone(\Zbb_+)$ or $\Lone(\Rbb_+)$. 
For $p\in\{1,\infty\}$, the norm in $\Lscrp$ is denoted by $\|\cdot\|_{p}$.
Let $(\Xbb,\|\cdot\|_{\Xbb})$ and $(\Ybb,\|\cdot\|_{\Ybb})$ be two normed vector spaces. The set of linear bounded (continuous) operators $\mx{A}:\Xbb\to\Ybb$, denoted by $\Lcal(\Xbb,\Ybb)$,
is a normed vector space with the norm defined as $\|\mx{A}\|_{\Lcal(\Xbb,\Ybb)}:=\sup_{\vc{x}\in\Xbb,\|\vc{x}\|_{\Xbb}\le 1}\ \|\mx{A}\vc{x}\|_{\Ybb}$.
The identity matrix/operator and the zero vector are denoted by $\eye$ and $\zero$ respectively.
Given $\Vscr\subseteq \Xbb$, the linear span of $\Vscr$, denoted by $\linspan\Vscr$, is a linear subspace of $\Xbb$ containing linear combination of the elements of $\Vscr$.
Let $\Ycal$ be a set and $\Ccal\subseteq\Ycal$. 
We define the function $\delta_{\Ccal}$ as
$\delta_{\Ccal}(y) = 0$, if $y\in\Ccal$ and $\delta_{\Ccal}(y) = \infty$, otherwise.
Similarly, function $\mathbf{1}_{\Ccal}$ is defined as $\mathbf{1}_{\Ccal}(y) = 1$, if $y\in\Ccal$ and $\mathbf{1}_{\Ccal}(y) = 0$, otherwise.

\section{System Identification with Frequency Domain Side Information}\label{sec:problem_statement}
Consider a stable LTI system $\Scal$ described with impulse response $\gS:=(\gtS_t)_{t\in\Tbb}\in\Rbb^\Tbb$ and transfer function $\GS$. 
For the case of discrete-time and the case of continuous-time, we have here respectively $\Tbb :=\Zbb_+$ and $\Tbb :=\Rbb_+$.
Let the system be actuated with a bounded input signal denoted by $\vc{u}=(u_t)_{t\in \Tbb}\in \Lscr_\infty$.
Accordingly, for any $t\in\Tbb$, 
one can define linear map $\Lu{t}$ over the space of stable impulse responses, for the discrete-time case, as  
\begin{equation}\label{eqn:Lut_Z}
\Lu{t}(\vcg) := \sum_{s\in \Zbb_+}g_s u_{t-s},
\end{equation}
and similarly, for the case of continuous-time, as
\begin{equation}\label{eqn:Lut_R}
\Lu{t}(\vcg) := \int_{\Rbb_+}\!\!g_s u_{t-s}\ \!\drm s.
\end{equation}
Let the output of the system be measured at time instants $\Tscr:=\{t_i\ \!|\! \ i=0,\ldots,\nD\!-\!1\}$, for a given $\nD\in\Nbb$. More precisely, define $y_t$ as 
\begin{equation}\label{eqn:output_sys_S}
y_t := \Lu{t}(\gS)+w_t, \qquad t\in\Tscr,
\end{equation}
where,  for any $t\in\Tscr$, $w_t$ denotes the measurement uncertainty.
Subsequently, let $\Dscr$ be the set of input-output pairs, i.e., $\Dscr$ is defined as
$\Dscr = \{(u_t,y_t)\ | \ t\in\Tscr \}$.

Let assume we know that the $\Hcal_{\infty}$-norm of system $\Scal$ is bounded by a given scalar $\rho\in\Rbb_+$. 
The question is whether this side-information is naturally encoded in the identification problem. 
The following example elaborates this issue by demonstrating that the information on the bound of the system's $\Hcal_{\infty}$-norm is not included in the models identified by the standard identification approaches such as the subspace and the kernel-based methods.
\begin{example*}\label{exm:pf}\normalfont
Let $\Scal$ be a discrete-time system described by the following transfer function
\begin{equation} \label{eqn:sys_example}
	\GS(z) = \frac{1}{2z-1}\ +\ \frac{3}{100}\ \frac{z-1}{z^2+z+0.9}.
\end{equation}
This can be a model of a low-pass filter where its frequency response has a deformity in high frequencies potentially due to the impact of aging, or possible mistakes in design and implementation. To obtain set of data $\Dscr$, 
we actuate the system with a random white Gaussian signal of length $\nD=150$, and then, the output of system is measured where SNR $= 14.5$ [dB] (see Figure \ref{fig:example}).
Additionally, let assume we are given the side-information $\|\GS\|_{\Hcal_\infty}\le 1$ which can be due to the filter nature of $\Scal$. 
We employ \textsc{Matlab}'s \textsc{System Identification Toolbox} \cite{ljung2012version}  to estimate models $\hat{G}_1$ and $\hat{G}_2$ for the system using \texttt{impulseest} and \texttt{n4sid}, respectively.
The results are shown in Figure \ref{fig:example} where we have $\|\hat{G}_1\|_{\Hcal_\infty}=1.24$ and $\|\hat{G}_2\|_{\Hcal_\infty}=1.38$. 
One can see that the models estimated by the mentioned standard identification methods do not comply the side-information.
\end{example*}
\begin{figure}[t]
	\centering
	\iffalse
	\includegraphics[width =0.45\textwidth]{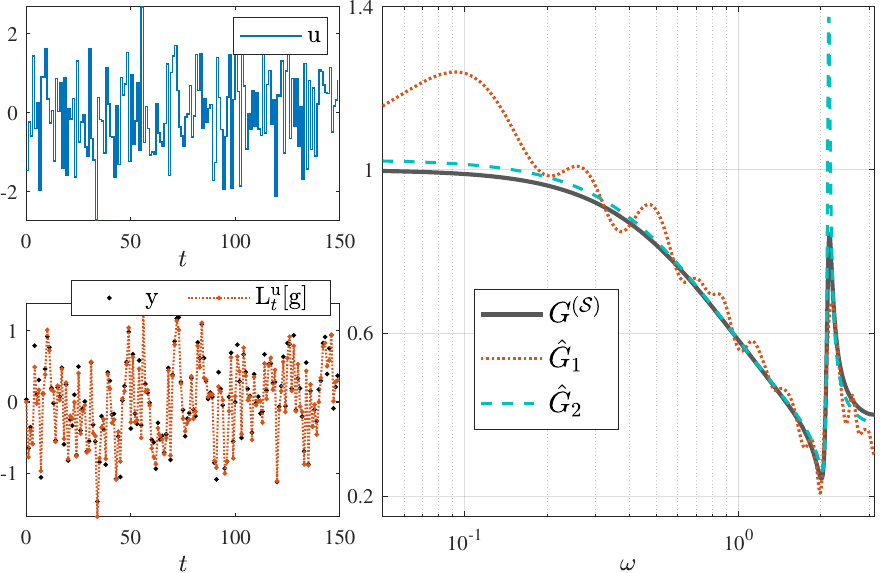}
	\caption{Left: The input-output pairs of data for system \eqref{eqn:sys_example} are shown. Right: The estimated models $\hat{G}_1$ and $\hat{G}_2$ for the system \eqref{eqn:sys_example} are compared with  the true transfer function.}
	\else
	\includegraphics[width =0.43\textwidth]{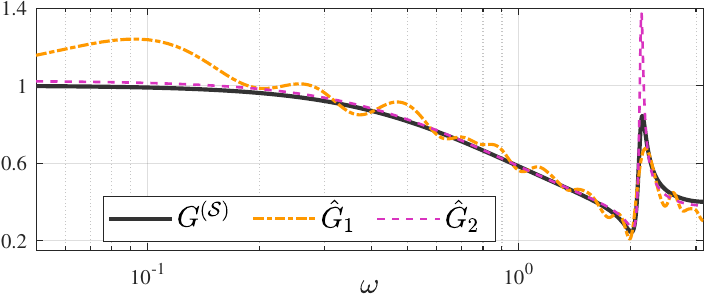}
	\caption{The estimated models $\hat{G}_1$ and $\hat{G}_2$ for the system \eqref{eqn:sys_example} are compared with  the true transfer function.}	
	\fi
	\label{fig:example}
\end{figure}
Motivated by this example, we introduce the main problem discussed in this paper as the system identification with side-information on the $\Hcal_{\infty}$-norm of the system.
\begin{problem}\label{prob:Hinft_le_1}
	Given the set of input-output data $\Dscr$, estimate the impulse response of system $\Scal$ satisfying the side-information $\|\GS\|_{\Hcal_\infty}\le \rho$, where $\rho$ is a given non-negative real scalar. 
\end{problem}

This problem can be extended to the identification problem with more general sense of dissipativity side-information. 
More precisely, let $\mx{Q}\in\Sbb^2$ be an indefinite matrix and assume that in addition to the given set of data $\Dscr$, we know that the system $\Scal$ is 
{\em dissipative} \cite{antoulas2005approximation} with respect to quadratic supply rate function $s_{\mx{Q}}(u,y)$ defined as 
\begin{equation}\label{eqn:supplyrate}
	\begin{split}
		s_{\mx{Q}}(u,y)&:= 
		\begin{bmatrix}u&y\end{bmatrix} 
		\mx{Q}\begin{bmatrix}u\\y\end{bmatrix}
		= 
		\begin{bmatrix}u&y\end{bmatrix} 
		\begin{bmatrix}q_{u}& q_{uy}\\q_{uy}&q_{y}\end{bmatrix} 
		\begin{bmatrix}u\\y\end{bmatrix}
		\\&= 
		q_{u} u^2 + 2 q_{uy} uy + q_{y} y^2, 
	\end{split}   
\end{equation}
where $q_y<0$ \footnote{In other words, given that the system is initially at rest,
for any input-output pairs $\big(u_t,y_y\big)_{t\in\Tbb}$ and any $\tau\in\Tbb$, we have 
$\int_{0}^{\tau}s_{\mx{Q}}(u_t,y_t)\drm t \ge 0$, if $\Tbb=\Rbb_+$, 
and $\sum_{t=0}^{\tau}s_{\mx{Q}}(u_t,y_t) \ge 0$, if $\Tbb=\Zbb_+$.
}\cite{antoulas2005approximation,haddad2011nonlinear}. 
For the case of $s_{\mx{Q}}(u,y)=  \rho^2 u^2 - y^2$ where $\rho\in\Rbb_+$, this dissipativity prior knowledge is equivalent to being $\Lscr_2$-gain or $\Hcal_\infty$-norm of the system not larger than $\rho$. 
Given the above side-information on the dissipativity of system $\Scal$, a desired identification procedure for estimating the impulse response of the system should suitably utilize this information and also guarantee that the identified model satisfies the given feature.
More precisely, one has to address the following problem:
\begin{problem}\label{prob:dissip}
	Given the set of data $\Dscr$ and considering the side-information on the dissipativity of system with respect to the supply rate $s_{\mx{Q}}(u,y)$, estimate the impulse response of system $\Scal$ satisfying the provided side-information. 
\end{problem}
Note that we have 
\begin{equation}
	\mx{Q} = \begin{bmatrix}q_{u}& q_{uy}\\q_{uy}&q_{y}
	\end{bmatrix} =
	\begin{bmatrix}l_1& l_2\\0&l_3
	\end{bmatrix} 
	\begin{bmatrix}1& 0\\0&-1
	\end{bmatrix} 
	\begin{bmatrix}l_1& 0\\l_2&l_3
	\end{bmatrix} 
\end{equation}
where 
$l_1=(\det\mx{Q}/q_y)^{\frac{1}{2}}$,
$l_2=-q_{uy}/(-q_y)^{\frac{1}{2}}$,
and 
$l_3 = (-q_y)^{\frac{1}{2}}$, 
%
and subsequently, one can define system $\tilde{\Scal}$ with input $v = l_1 u$ and output $z = l_2u+l_3y$ as shown in Figure \ref{fig:Hsys}. Then, $\tilde{\Scal}$ is a dissipative system with respect to supply rate function $s(v,z) = u^2-y^2$, or equivalently, we have $\|\GStilde\|_{\Hcal_\infty}\le 1$ where $\GStilde$ is the transfer function of system $\tilde{\Scal}$.
Meanwhile, using the introduced change of variables and based on $\Dscr$, we can define set of data $\Dscr_{\tilde{\Scal}}:=\{(v_t,z_t)\ |\ t\in\Tscr\},$ where $v_t := l_1 u_t$  and $z_t := l_2u_t+l_3y_t$, for $t\in\Tscr$. 
Accordingly, the problem is equivalent to identifying system $\tilde{\Scal}$ given the set of data $\Dscr_{\tilde{\Scal}}$ and the side-information $\|\GStilde\|_{\Hcal_\infty}\le 1$.
Therefore, in order to address Problem \ref{prob:dissip} it is enough to find a solution approach for Problem \ref{prob:Hinft_le_1}. 
\begin{figure}[h]
	\centering
	\includegraphics[width =0.35\textwidth]{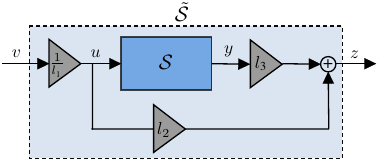}
	\caption{The equivalent system $\tilde{\Scal}$ with $\Hcal_\infty$-norm less than or equal to $1$.}
	\label{fig:Hsys}
\end{figure}

The Problem \ref{prob:Hinft_le_1} can be further extended to the case where system $\Scal$ is approximately known.
More precisely, let $\overline{\Scal}$ be a known system with transfer function $\GSbar$ and we are given that $\|\GS-\GSbar\|_{\Hcal_\infty}\le \rho$, for a known $\rho\in\Rbb_+$. 
This can be the case in various situations, e.g., the system might be identified previously as $\overline{\Scal}$, while it has been changed later due to aging or possible modifications of the plant; or towards a specific application, the model of the system has been estimated as $\overline{\Scal}$ using a particular identification approach, and additionally, we would like to obtain an improved version of the model for some other purpose like prediction.
Based on this discussion, one can propose the following identification problem:
\begin{problem}\label{prob:Hinft_err_eps}
	Given the set of input-output data $\Dscr$, estimate the impulse response of system $\Scal$ satisfying the side-information $\|\GS-\GSbar\|_{\Hcal_\infty}\le \rho$, where $\overline{\Scal}$ is a known system with transfer function $\GSbar$ and $\rho\in\Rbb_+$ is a given scalar. 
\end{problem}
Let $\Delta\Scal$ be the system with the transfer function $\GSDelta:=\GS-\GSbar$ and the impulse response $\gSDelta:=\gS-\gSbar$, where $\gSbar$ is the impulse response of system $\bar{\Scal}$. 
Since $\gSbar$ is known, one can obtain $\Lu{t}(\gSbar)$, and subsequently, define  $d_t $ as $d_t:= y_t-\Lu{t}(\gSbar)$, for any $t\in\Tscr$. 
Due to \eqref{eqn:output_sys_S} and the linearity of $\Lu{t}$, we have
\begin{equation}
	d_t = \Lu{t}(\gS)+w_t-\Lu{t}(\gSbar)  = \Lu{t}(\gSDelta) +w_t, \quad t\in\Tscr.
\end{equation}
Accordingly, we can define $\Dscr_{\Delta\Scal}:=\{(u_t,d_t)\ |\ t\in\Tscr\}$ as the input-output pairs of data for the system $\Delta\Scal$.
Therefore, the Problem \ref{prob:Hinft_err_eps} is equivalent to identifying system $\Delta\Scal$ given the set of data $\Dscr_{\Delta\Scal}$ and the side-information $\|\GSDelta\|_{\Hcal_\infty}\le \rho$, which is in form of Problem \ref{prob:Hinft_le_1}.

Let $\Tbb=\Zbb_+$, $\Tscr:=\{i=0,\ldots,\nD\!-\!1\}$, and consider the following identification problem:
\begin{problem}\label{prob:Hinft_weight_eps}
	Given the set of input-output data $\Dscr$, estimate the impulse response of system $\Scal$ satisfying the frequency domain side-information $\|W\GS\|_{\Hcal_\infty}\le \rho$, where $\rho\in\Rbb_+$ is a given scalar, and weight $W$ is a known stable transfer function with stable casual inverse. 
\end{problem}
Let the output signal $\vc{y}=(y_t)_{t=0}^{\nD-1}$ be filtered by system $W$ and $\vc{p}:=(p_t)_{t=0}^{\nD-1}$ be the resulting filtered signal. 
One can see that $\Dscr_H:=\{(u_t,p_t)|t\in\Tscr\}$ is a set of input-output data for the system with transfer function $H:=W\GS$, where we know that $\|H\|_{\Hcal_\infty}\le \rho$. Let $\hat{H}$ be the solution of Problem~\ref{prob:Hinft_le_1} for this setting. Then, $\hat{G}:=W^{-1}\hat{H}$ is a solution to Problem~\ref{prob:Hinft_weight_eps}.

Considering the above discussion, a solution approach for Problem \ref{prob:Hinft_le_1} leads to addressing Problem~\ref{prob:dissip}, Problem~\ref{prob:Hinft_err_eps}, and Problem~\ref{prob:Hinft_weight_eps}. 
In the remainder of the paper, we discuss solving Problem~\ref{prob:Hinft_le_1}. 
\section{The Estimation Problem: Existence and Uniqueness of the Solution}
\label{sec:MainOpt}
In this section, an optimization problem is introduced to address the estimation Problem \ref{prob:Hinft_le_1}. Additionally, we investigate the existence and uniqueness properties for the solution of this problem.
 
Let $\Fscr\subseteq\Rbb^\Tbb$ be a suitable functional space of stable impulse responses taken as the hypothesis set for the estimation Problem \ref{prob:Hinft_le_1}.
Given bounded signal $\vc{u}\in\Rbb^\Tbb$, with respect to each $t\in\Tbb$, we have the linear map $\Lu{t}:\Fscr\to \Rbb$ as defined in \eqref{eqn:Lut_Z} and \eqref{eqn:Lut_R}.
Based on this definition and the set of data $\Dscr$, one can define the {\em empirical loss} function or the {fitting error} function, $\Lcal_{\Dscr}:\Fscr\to\Rbb$, as the sum of squared error. In other words, for a given candidate impulse response $\vc{g}\in\Fscr$, we have that
\begin{equation}\label{eqn:LD}
\Lcal_{\Dscr}(\vc{g}) := \sumOp_{t\in\Tscr}(\Lu{t}(\vc{g})-y_t)^2.
\end{equation} 
Define $\Gscr\subseteq\Lscr_1$ as the set of impulse responses corresponding to the systems with $\Hcal_\infty$-norm less than or equal to $\rho$. 
More precisely, 
we have
\begin{equation}\label{eqn:set_G_Z}
\!\!\Gscr\!:=\!\bigg\{\vcg\!=\!(g_t)_{t\in\Zbb_+}
\!\in\!\Lscr_1
\bigg| \supOp_{\omega\in[0,\pi]}
\Big|
\sum_{t\in\Zbb_+}g_t\expe^{-\Jimage\omega t}
\Big| \!\le\! \rho
\bigg\},
\end{equation}
and
\begin{equation}\label{eqn:set_G_R}
\Gscr\!:=\!\bigg\{\vcg\!=\!(g_t)_{t\in\Rbb_+}
\!\in\!\Lscr_1
\bigg| \supOp_{\omega\in\Rbb_+}
\Big|
\int_{\Rbb_+}g_t\expe^{-\Jimage\omega t}
\drm t\Big| \!\le\! \rho
\bigg\},
\end{equation}
respectively for $\Tbb=\Zbb_+$ and $\Tbb=\Zbb_+$.
Note that since each element of $\Gscr$ belongs to $\Lscr_1$ the summation in \eqref{eqn:set_G_Z} and the integration in \eqref{eqn:set_G_R} are well-defined.
Accordingly, in order to address Problem \ref{prob:Hinft_le_1}, it is enough to solve the following optimization problem
\begin{equation}\label{eqn:opt_F_0}
\begin{array}{cl}
\minOp_{\vc{g}\in\Fscr}
&\Lcal_{\Dscr}(\vc{g}) + \lambda\Rcal(\vc{g})
\\
\mathrm{s.t.}
&
\vc{g}\in\Gscr, 
\end{array}
\end{equation}
where $\Rcal:\Fscr\to\Rbb_+$ is a suitable regularization function and
$\lambda>0$ is the regularization weight.
Due to the definition of set $\Gscr$ in \eqref{eqn:set_G_Z} and \eqref{eqn:set_G_R}, the constraint in \eqref{eqn:opt_F_0} is essentially equivalent to uncountably infinite number of inequalities. Consequently, the optimization problem \eqref{eqn:opt_F_0} is an infinite-dimensional program with uncountably infinite number of constraints which is not tractable at the current form. 
Therefore, we should address the following questions:
\begin{enumerate}
\item {\it What is a suitable candidate for hypothesis set $\Fscr$?}
\item {\it Does optimization problem \eqref{eqn:opt_F_0} admit an optimal solution? Is this solution unique?}
\item {\it How can we obtain the solution of \eqref{eqn:opt_F_0} or a tight approximation for it?}
\end{enumerate}
In the remainder of this section, we address these questions except the last one which is postponed to Section \ref{sec:Tractable}.
\subsection{Stable Reproducing Kernel Hilbert Spaces}
The hypothesis space taken for estimating the unknown impulse response is a type of Hilbert spaces known as  {\em reproducing kernel Hilbert spaces} (RKHS) which
are introduced  briefly below (see \cite{aronszajn1950theory,berlinet2011reproducing} for more details).
The structure of RKHS provides a suitable framework for investigating the problem and obtaining a tractable scheme for solving \eqref{eqn:opt_F_0}. 

\begin{definition}[\cite{berlinet2011reproducing}]
\label{def:RKHS}
Let $\Hscr\subseteq \Rbb^{\Tbb}$ be a Hilbert space endowed with inner product $\inner{\cdot}{\cdot}_{\Hscr}$ and induced norm $\norm{\cdot}_{\Hscr}$. Then, $\Hscr$ is a {\em reproducing kernel Hilbert space} (RKHS) if for any $t\in \Tbb$, 
we have
$\sup\big\{|g_t|\ \big|\ \vcg:=(g_t)_{t\in\Tbb}\in\Hscr,\|\vc{g}\|_{\Hscr}\le 1\big\}<\infty$. 
\end{definition}
Along with the RKHS, the notion of Mercer kernel is introduced which is provided in the next definition. 
\begin{definition}[\cite{berlinet2011reproducing}]
\label{def:kernel_and_section}
The continuous function $\kernel:\Tbb\times\Tbb\to \Rbb$ is a {\em Mercer kernel} or {\em positive-definite kernel} when for any $m\in\Nbb$, $t,s,t_1,\ldots,t_m\in\Tbb$ and $a_1,\ldots,a_m\in\Rbb$, we have $\kernel(t,s)=\kernel(s,t)$ and $\sum_{1\le i,j \le m} a_ia_j\kernel(t_i,t_j)\ge 0$.
For each $t\in\Tbb$, 
the {\em section} of kernel $\kernel$ at $t$ 
is defined as the function $\kernel(t,\cdot):\Tbb\to\Rbb$ and denoted by $\kernel_{t}$.
\end{definition}
The next theorem shows the connection between RKHS introduced in Definition \ref{def:RKHS} and the notion of Mercer kernel defined in Definition \ref{def:kernel_and_section}.
\begin{theorem}[\cite{berlinet2011reproducing}]\label{thm:RKHS_mercer}
Given Mercer kernel $\kernel:\Tbb\times\Tbb\to \Rbb$, there exists a RKHS $\Hk\subseteq\vc{g}\in\Rbb^{\Tbb}$, endowed with inner product $\inner{\cdot}{\cdot}_{\Hk}$ and norm $\|\cdot\|_{\Hk}$, such that for any $t\in\Tbb$, we have
$\kernel_k\in\Hk$,  and $\inner{\vc{g}}{ \kernel_{t}}_{\Hk}=g_t$, for all $\vc{g}=(g_t)_{t\in\Tbb}\in\Hk$.
The second feature is called {\em reproducing property}.
\end{theorem}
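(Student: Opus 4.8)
The plan is to carry out the classical Moore--Aronszajn construction, building $\Hk$ as the completion of the linear span of the kernel sections. First I would set $\Hscr_0 := \linspan\{\kernel_t \mid t\in\Tbb\}$ and define a candidate inner product on it by declaring, for $f = \sum_{i=1}^m a_i \kernel_{t_i}$ and $h = \sum_{j=1}^n b_j \kernel_{s_j}$,
\begin{equation*}
\inner{f}{h}_{\Hscr_0} := \sum_{i=1}^m\sum_{j=1}^n a_i b_j\, \kernel(t_i,s_j).
\end{equation*}
The first routine checks are that this expression is well defined (independent of the chosen representations of $f$ and $h$), bilinear, and symmetric; well-definedness follows by rewriting $\inner{f}{h}_{\Hscr_0} = \sum_j b_j f(s_j) = \sum_i a_i h(t_i)$, each side depending on only one of the two representations. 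Positive semidefiniteness, $\inner{f}{f}_{\Hscr_0}\ge 0$, is exactly the Mercer kernel condition $\sum_{i,j} a_i a_j \kernel(t_i,t_j)\ge 0$ from Definition \ref{def:kernel_and_section}.

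Next I would establish the reproducing property on $\Hscr_0$: directly from the definition, $\inner{f}{\kernel_t}_{\Hscr_0} = \sum_i a_i \kernel(t_i,t) = f(t)$ for every $f\in\Hscr_0$ and $t\in\Tbb$. This immediately upgrades the semidefinite form to a genuine inner product, since the Cauchy--Schwarz inequality (valid for any symmetric positive semidefinite form) gives $|f(t)|^2 = |\inner{f}{\kernel_t}_{\Hscr_0}|^2 \le \inner{f}{f}_{\Hscr_0}\, \kernel(t,t)$, so $\inner{f}{f}_{\Hscr_0}=0$ forces $f(t)=0$ for all $t$, i.e. $f=0$ as an element of $\Rbb^\Tbb$.

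The substantive step, and the one I expect to be the main obstacle, is to realize the abstract metric completion of $(\Hscr_0,\inner{\cdot}{\cdot}_{\Hscr_0})$ as an honest space of functions on $\Tbb$ rather than as equivalence classes of Cauchy sequences. The key is again the inequality $|f(t)|\le \norm{f}_{\Hscr_0}\,\kernel(t,t)^{1/2}$: any $\norm{\cdot}_{\Hscr_0}$-Cauchy sequence $(f_n)$ in $\Hscr_0$ is, for each fixed $t$, a Cauchy sequence of reals, hence converges to a pointwise limit $f(t):=\lim_n f_n(t)$. I would take $\Hk$ to be the set of all such pointwise limits, show the limit function depends only on the equivalence class of the Cauchy sequence, and transport the inner product via $\inner{f}{h}_{\Hk} := \lim_n \inner{f_n}{h_n}_{\Hscr_0}$; standard completion arguments then show this is well defined and makes $\Hk$ a Hilbert space into which $\Hscr_0$ embeds isometrically and densely. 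The reproducing property passes to the limit: for $f\in\Hk$ with $f_n\to f$ in norm, continuity of the inner product yields $\inner{f}{\kernel_t}_{\Hk} = \lim_n \inner{f_n}{\kernel_t}_{\Hscr_0} = \lim_n f_n(t) = f_t$, the claimed identity. Finally, this same computation gives $|g_t| = |\inner{\vcg}{\kernel_t}_{\Hk}| \le \norm{\vcg}_{\Hk}\,\kernel(t,t)^{1/2}$, so the supremum in Definition \ref{def:RKHS} is bounded by $\kernel(t,t)^{1/2}<\infty$ for each $t$, confirming that $\Hk$ is an RKHS in the required sense.
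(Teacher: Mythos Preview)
Your proposal is the classical Moore--Aronszajn construction and is correct. Note, however, that the paper does not supply its own proof of this theorem: it is stated with a citation to \cite{berlinet2011reproducing} and used as background, so there is no in-paper argument to compare against. What you have written is precisely the standard proof one finds in that reference (and in Aronszajn's original paper), so in that sense your approach coincides with the source the paper defers to.

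One small point worth tightening in the completion step: beyond showing that the pointwise limit $f$ depends only on the equivalence class of the Cauchy sequence, you also need the converse direction---that the map from the abstract completion into $\Rbb^{\Tbb}$ is \emph{injective}, i.e., if a Cauchy sequence $(f_n)$ in $\Hscr_0$ has pointwise limit $0$ then $\norm{f_n}_{\Hscr_0}\to 0$. This is the usual subtle step in Moore--Aronszajn and follows from combining the Cauchy property with $\inner{f_n}{h}_{\Hscr_0}\to 0$ for every $h\in\Hscr_0$; you have subsumed it under ``standard completion arguments,'' which is acceptable, but it is the one place where a reader might want an extra line.
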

Based on Theorem \ref{thm:RKHS_mercer}, we know that a RKHS is uniquely characterized with a Mercer kernel. 
Therefore, in the current context, the kernel $\kernel$ is supposed to be chosen suitably such that each impulse response $\vc{g}\in\Hk$ represents a stable system in the bounded-input-bounded-output (BIBO) sense. More precisely, we should have $\Hk\subseteq\Lscr^1$. When kernel $\kernel$ satisfies this feature, it is called a {\em stable kernel} \cite{ chen2018stability}. The following theorem provides a necessary and sufficient condition for the stability of a given kernel.
\begin{theorem}[\cite{chen2018stability,carmeli2006vector}]
Let $\kernel:\Tbb\times\Tbb\to \Rbb$ be a Mercer kernel. Then, $\kernel$ is {\em stable}  if and only if,  
when $\Tbb=\Zbb_+$, we have
$\sum_{t\in\Zbb_+}\big|\sum_{s\in\Zbb_+}u_s\kernel(t,s)\big|<\infty$, and, when $\Tbb=\Rbb_+$, we have 
$\int_{\Rbb_+}\big|\int_{\Rbb_+}u_s\kernel(t,s)\drm s\big|\drm t<\infty$,
for any $\vc{u}=(u_t)_{t\in\Tbb}\in\Lscr^{\infty}$.

\end{theorem}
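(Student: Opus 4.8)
The plan is to convert the set inclusion defining stability, namely $\Hk\subseteq\Lscr^1$, into a boundedness statement about the linear map $T$ associated with the kernel, where $(T\vcu)(t):=\sum_{s}u_s\kernel(t,s)$ in the case $\Tbb=\Zbb_+$ and $(T\vcu)(t):=\int_{\Rbb_+}u_s\kernel(t,s)\,\drm s$ in the case $\Tbb=\Rbb_+$. This quantity is exactly what appears inside the stated summation/integration, so the theorem asserts precisely that $T$ maps $\Lscr^\infty$ into $\Lscr^1$. Note first that for $(T\vcu)(t)$ to be well defined for \emph{every} $\vcu\in\Lscr^\infty$ one already needs each section $\kernel_t$ to lie in $\Lscr^1$ (take $u_s=\mathrm{sign}\,\kernel(t,s)$), and that once $\kernel_t\in\Lscr^1$ we have $(T\vcu)(t)=\inner{\vcu}{\kernel_t}$ as the $\Lscr^\infty$--$\Lscr^1$ pairing. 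The key preliminary step is to upgrade stability to a \emph{quantitative} inclusion: I claim $\Hk\subseteq\Lscr^1$ is equivalent to the inclusion map $\iota:\Hk\to\Lscr^1$ being bounded, i.e. $\norm{\vcg}_{1}\le C\norm{\vcg}_{\Hk}$ for some $C$ and all $\vcg\in\Hk$. This follows from the closed graph theorem: both spaces are Banach, and if $\vcg_n\to\vcg$ in $\Hk$ while $\iota\vcg_n\to\vc{h}$ in $\Lscr^1$, then the reproducing property of Theorem~\ref{thm:RKHS_mercer} gives pointwise convergence $\vcg_n\to\vcg$, whereas $\Lscr^1$ convergence gives (a.e., along a subsequence) $\iota\vcg_n\to\vc{h}$, so $\vcg=\vc{h}$ and the graph is closed.

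For the direction ($\Rightarrow$), I would assume stability and pass to the adjoint $\iota^{*}:(\Lscr^1)^{*}=\Lscr^\infty\to\Hk^{*}\cong\Hk$, which is bounded because $\iota$ is. For $\vcu\in\Lscr^\infty$, evaluating $\iota^{*}\vcu\in\Hk$ at $t$ through the reproducing property yields $(\iota^{*}\vcu)(t)=\inner{\iota^{*}\vcu}{\kernel_t}_{\Hk}=\inner{\vcu}{\iota\kernel_t}=\inner{\vcu}{\kernel_t}=(T\vcu)(t)$, so $T\vcu=\iota^{*}\vcu\in\Hk\subseteq\Lscr^1$. This is exactly the claimed condition.

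For the direction ($\Leftarrow$), I would assume $T\vcu\in\Lscr^1$ for every $\vcu\in\Lscr^\infty$; the same closed graph argument shows $T:\Lscr^\infty\to\Lscr^1$ is bounded, say $\norm{T\vcu}_{1}\le C\norm{\vcu}_{\infty}$. Fix $\vcg\in\Hk$ and, in the discrete case, take the truncated sign sequence $\vcu^{(N)}$ with $u^{(N)}_t=\mathrm{sign}(g_t)$ for $t\le N$ and $0$ otherwise, so that $T\vcu^{(N)}=\sum_{t\le N}u^{(N)}_t\kernel_t$ is a finite combination of sections, hence in $\Hk$. Then $\sum_{t\le N}|g_t|=\sum_{t\le N}u^{(N)}_t\inner{\vcg}{\kernel_t}_{\Hk}=\inner{\vcg}{T\vcu^{(N)}}_{\Hk}\le\norm{\vcg}_{\Hk}\norm{T\vcu^{(N)}}_{\Hk}$, while the self-pairing identity $\norm{T\vcu^{(N)}}_{\Hk}^{2}=\inner{\vcu^{(N)}}{T\vcu^{(N)}}\le\norm{\vcu^{(N)}}_{\infty}\norm{T\vcu^{(N)}}_{1}\le C$ bounds the second factor. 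Letting $N\to\infty$ gives $\norm{\vcg}_{1}\le\sqrt{C}\,\norm{\vcg}_{\Hk}<\infty$, so $\vcg\in\Lscr^1$ and thus $\Hk\subseteq\Lscr^1$.

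The hard part will be the continuous-time case $\Tbb=\Rbb_+$, where the clean finite sums are replaced by measure-theoretic bookkeeping. Point evaluations are not $\Lscr^1$-continuous, so the closed graph step must be run through a.e. convergence along subsequences; the identities $\norm{T\vcu}_{\Hk}^{2}=\inner{\vcu}{T\vcu}$ and $\inner{\vcg}{T\vcu}_{\Hk}=\int_{\Rbb_+} u_s g_s\,\drm s$ require a Fubini/Tonelli justification, and the truncated $\mathrm{sign}(g)$ input must be realized as an admissible $\Lscr^\infty$ function (restricted to bounded intervals, with a passage to the limit) for which one must first confirm that $T\vcu$ genuinely lies in $\Hk$ before pairing it with $\vcg$. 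These verifications are routine but delicate; once they are in place, the discrete argument transcribes essentially verbatim.
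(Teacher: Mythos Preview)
The paper does not prove this theorem; it is quoted without proof from the cited references \cite{chen2018stability,carmeli2006vector}. Your argument is correct and is in fact the standard functional-analytic route taken in those references: identify the kernel integral operator $T$ with the Banach adjoint $\iota^{*}$ of the embedding $\iota:\Hk\hookrightarrow\Lscrone$, and obtain automatic continuity of $\iota$ and of $T$ via the closed graph theorem. The discrete computations you give are clean and accurate, including the self-pairing identity $\norm{T\vcu^{(N)}}_{\Hk}^{2}=\inner{\vcu^{(N)}}{T\vcu^{(N)}}$ for finitely supported $\vcu^{(N)}$.

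For the continuous case, the verifications you flag as ``routine but delicate'' are indeed harmless here: continuity of $\kernel$ (Definition~\ref{def:kernel_and_section}) gives local boundedness of $s\mapsto\kernel(s,s)^{1/2}=\norm{\kernel_s}_{\Hk}$, so $s\mapsto\kernel_s$ is Bochner integrable on compacts and the identities $\inner{\vcg}{T\vcu}_{\Hk}=\int u_s g_s\,\drm s$ and $\norm{T\vcu}_{\Hk}^{2}=\int u_s(T\vcu)(s)\,\drm s$ follow by commuting the Bochner integral with the $\Hk$ inner product. With these in place your discrete argument does transcribe directly.
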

The most common stable kernel in the literature \cite{pillonetto2014kernel} is the \emph{tuned/correlated} (TC) kernel defined as follows
\begin{equation}\label{eqn:TC_kernel}
\kernel(s,t) = 
\begin{cases}
\alpha^{\max(s,t)} & \text{ if $\Tbb=\Zbb_+$,}\\
\expe^{-\beta\max(s,t)} & \text{ if $\Tbb=\Rbb_+$,}
\end{cases}	
\end{equation}
where $\alpha\in[0,1)$ and $\beta>0$. By setting $\alpha$ as $\expe^{-\beta}$, one can obtain same definition for both cases in \eqref{eqn:TC_kernel}.  
This is the default kernel employed in \texttt{impulseest} function of 
\textsc{Matlab}'s \textsc{System Identification Toolbox}. 
One can easily see that if for kernel $\bbh$ is \emph{dominated} by TC kernel $\kernel$, i.e., there exists $\gamma\in\Rbb_+$ such that for any $s,t\in\Tbb$ we have $|\bbh(s,t)|\le \gamma \kernel(s,t)$, then $\bbh$ is a stable kernel. 
This highlights the importance of TC kernels.
\subsection{The Estimation Problem in Stable Reproducing Kernel Hilbert Spaces}
\label{ssec:general_opt}
Let $\kernel$ be a stable kernel and $\Hk$ be the corresponding RKHS.
Motivated by \cite{pillonetto2010new,pillonetto2014kernel,ljung2020shift} and the above discussion, we set $\Hk$ as the hypothesis space for the estimation problem, i.e., $\Fscr=\Hk$.
Subsequently, we can introduce a suitable kernel-based regularization. More precisely, let the regularization function $\Rcal:\Hk\to\Rbb_+$ be defined as $\Rcal(\vc{g}) := \|\vc{g}\|_{\Hk}^2$.
Therefore, in correspondence with the estimation Problem  \ref{prob:Hinft_le_1}, we have the following optimization problem:  
\begin{equation}\label{eqn:opt_1}
\begin{array}{cl}
\minOp_{\vc{g}\in\Hk}
&
\sumOp_{t\in\Tscr}(\Lu{t}(\vc{g})-y_t)^2 + \lambda\ \! \|\vc{g}\|_{\Hk}^2,
\\
\mathrm{s.t.}
& 
|G(\Jimage \omega)|\le \rho, \quad  \forall \omega\in\Omega_{\Tbb},
\end{array}
\end{equation}
where $G$ denotes the transfer function corresponding to the impulse response $\vc{g}$, $\Omega_{\Tbb}:=[0,\pi]$ when $\Tbb=\Zbb_+$, and $\Omega_{\Tbb}:=\Rbb_+$ when $\Tbb=\Rbb_+$. 
Note that by abuse of notation, for both cases of discrete-time and continuous-time, we employ same expression $G(\Jimage\omega)$ for the transfer function of the system.  

The constraints introduced in \eqref{eqn:opt_1} are the Fourier transform of the impulse response $\vcg$ at different frequencies. 
Since $\vcg$ belongs to RKHS $\Hk$, one should study the corresponding notion and its properties in the domain of RKHS $\Hk$.
\iftrue
\begin{definition}\label{def:Fw}
With respect each $\omega\in \Omega_{\Tbb}$, we define maps $\Fwr:\Hk\to\Rbb$ and  $\Fwi:\Hk\to\Rbb$ such that for any $\vc{g}=(g_t)_{t\in\Tbb} \in\Hk$, we have
\begin{equation}\!\!\!\!\!\!\!\!
\Fwr(\vc{g}) \!:=\!\sum_{t\in\Zbb_+}
\! g_t \cos(\omega t),  \quad
\Fwi(\vc{g}) \!:=\! -\!\! \sum_{t\in\Zbb_+} 
\!g_t \sin(\omega t),\!\!\!\!
\end{equation}
when $\Tbb=\Zbb_+$, and 
\begin{equation}\label{eqn:def_CT_Fw}\!\!\!\!\!\!
\Fwr(\vc{g})\!:= \! \! \int_{\Rbb_+}\!\!\! g_t\cos(\omega t)\drm t, 
\quad
\Fwi(\vc{g})\!:=\! -\!\!\int_{\Rbb_+}\!\!\! g_t\sin(\omega t)\drm t,\!\!
\end{equation}
when $\Tbb=\Rbb_+$.
Also $\Fw:\Hk\to\Cbb$ is defined as $\Fw(\vc{g})=\Fwr(\vc{g})+\Jimage \Fwi(\vc{g})$,  for any $\vc{g}\in\Hk$.
\end{definition}
\else 
\begin{definition}\label{def:Fw_2}
With respect each $\omega\in \Omega_{\Tbb}$, we define maps $\Fwr:\Hk\to\Rbb$, $\Fwi:\Hk\to\Rbb$ and $\Fw:\Hk\to\Cbb$ such that for any $\vc{g}=(g_t)_{t\in\Tbb} \in\Hk$, we have
\begin{equation}
\begin{split}
\Fwr(\vc{g}) &:= \     \sum_{t\in\Zbb_+}
\ \! g_t\ \! \cos(\omega t), \\  
\Fwi(\vc{g}) &:= -\!\! \sum_{t\in\Zbb_+} 
\ \!g_t\ \! \sin(\omega t),\\
\Fw(\vc{g})  &:= \     \sum_{t\in\Zbb_+} 
\ \!g_t\ \! \expe^{-\Jimage \omega t},
\end{split}	
\end{equation}
when $\Tbb=\Zbb_+$, and 
\begin{equation}\label{eqn:def_CT_Fw_2}
\begin{split}
\Fwr(\vc{g}) &:= \ \ \int_{\Rbb_+} g_t\ \!\cos(\omega t)\ \!\drm t, \\
\Fwi(\vc{g}) &:= -\!\!\int_{\Rbb_+} g_t\ \!\sin(\omega t)\ \!\drm t,\\
\Fw(\vc{g})  &:= \ \ \int_{\Rbb_+}g_t\ \!\expe^{-\Jimage \omega t}\ \!\drm t,
\end{split}
\end{equation}
when $\Tbb=\Rbb_+$.
\end{definition}
\fi 
Based on the definition of $\Fw$,  for any $\vc{g}\in\Hk$ and $\omega\in\Omega_{\Tbb}$, we have  $G(\Jimage \omega) = \Fw(\vc{g})$, where $G$ is the transfer function corresponding to the impulse response $\vc{g}$.
Accordingly, the problem \eqref{eqn:opt_1} can be re-written in the following form
\begin{equation}\label{eqn:opt_2}
\begin{array}{cl}
\minOp_{\vc{g}\in\Hk}
&
\sumOp_{t\in\Tscr}(\Lu{t}(\vc{g})-y_t)^2 + \lambda\ \! \|\vc{g}\|_{\Hk}^2,
\\
\mathrm{s.t.}
& 
|\Fw(\vc{g})|\le \rho, \quad  \forall \omega\in\Omega_{\Tbb}.
\end{array}
\end{equation}
\begin{remark}
One can see that $\Fw$ is the Fourier transform restricted on the RKHS $\Hk$ evaluated for the frequency $\omega\in\Omega_{\Tbb}$.
However, we should note that it does not necessarily inherit the same properties of standard Fourier transform. More precisely, the structure of $\Hk$ plays a key role. 
\end{remark}
Before proceeding further, we need to introduce a notion based on the kernel $\kernel$. For any $n\in\Zbb_+$, define $\mu_n\in[0,\infty]$ as
\begin{equation}\label{eqn:mu_n}
\mu_n:=
\begin{cases}
\sumOp_{t\in\Zbb_+}t^n
\ \!\kernel(t,t)^{\frac{1}{2}}, & \text{ if } \Tbb=\Zbb_+,\\
\int_{\Rbb_+}t^n
\ \!\kernel(t,t)^{\frac{1}{2}}\ \!\drm t, & \text{ if } \Tbb=\Rbb_+.
\end{cases}
\end{equation}
The next lemma introduces properties of $\Fwr{}$, $\Fwi{}$ and $\Fw{}$.
\begin{lemma}\label{lem:Fw}
Let $\mu_0<\infty$. 
Then, the followings hold:
\\1) The maps $\Fwr,\Fwi:\Hk\to \Rbb$ and $\Fw:\Hk\to \Cbb$ are linear continuous with
$\|\Fwr\|_{\Lcal(\Hk,\Rbb)},\|\Fwi\|_{\Lcal(\Hk,\Rbb)}\ \le\ \mu_0.$
\\2) There exist unique elements $\phiwr=(\phiwrk{t})_{t\in\Tbb}$ and $\phiwi=(\phiwik{t})_{t\in\Tbb}$ in $\Hk$ such that, for any $\vc{g}\in\Hk$, we have
\begin{equation}\label{eqn:Fwr_Fwi_inner_product}
\Fwr(\vc{g})=\inner{\phiwr}{\vc{g}}_{\Hk},
\qquad
\Fwi(\vc{g})=\inner{\phiwi}{\vc{g}}_{\Hk},	
\end{equation}
3) For any $t\in\Tbb$, when $\Tbb=\Zbb_+$, we have
\begin{equation}\label{eqn:phiwr_phiwi_t_TZ}\!\!\!\!\!\!
\phiwrk{t} \!=\!\!\! \sum_{s\in\Zbb_+} \kernel(t,s)\cos(\omega s), 
\ 
\phiwik{t} \!=\! -\!\! \sum_{s\in\Zbb_+} 
\kernel(t,s)\sin(\omega s),\!\!
\end{equation}
and,  when $\Tbb=\Rbb_+$, we have
\begin{equation}\label{eqn:phiwr_phiwi_t_TR}
\!\!\!\!\!\!
\phiwrk{t}\! =\!\!\!\int_{\Rbb_+}\!\!\!\!\!
\kernel(t,s)\cos(\omega s)\drm s,
\  
\phiwik{t}\! =\!-\!\!\int_{\Rbb_+}\!\!\!\!\!
\kernel(t,s)\sin(\omega s)\drm s,\!\!	
\end{equation}
\end{lemma}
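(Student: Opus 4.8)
The plan is to control pointwise values of functions in $\Hk$ by the RKHS norm via the reproducing property, deduce absolute convergence and boundedness from the assumption $\mu_0<\infty$, and then invoke the Riesz representation theorem. The basic estimate is obtained by writing $g_t=\inner{\vc{g}}{\kernel_t}_{\Hk}$ and applying Cauchy--Schwarz together with $\|\kernel_t\|_{\Hk}^2=\inner{\kernel_t}{\kernel_t}_{\Hk}=\kernel(t,t)$ (the last equality again by the reproducing property of Theorem~\ref{thm:RKHS_mercer}), which gives
\[
|g_t|\ \le\ \|\vc{g}\|_{\Hk}\,\kernel(t,t)^{1/2}, \qquad t\in\Tbb.
\]

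For part~1 I would first use this bound to obtain absolute convergence: in the discrete case $\sum_{t\in\Zbb_+}|g_t|\le\|\vc{g}\|_{\Hk}\sum_{t\in\Zbb_+}\kernel(t,t)^{1/2}=\mu_0\|\vc{g}\|_{\Hk}<\infty$, so the series defining $\Fwr(\vc{g})$ and $\Fwi(\vc{g})$ converge absolutely, with the identical computation (an integral in place of the sum) handling $\Tbb=\Rbb_+$. Linearity of $\Fwr,\Fwi$ is then immediate from linearity of the sum/integral, and bounding $|\cos(\omega t)|,|\sin(\omega t)|\le 1$ yields $|\Fwr(\vc{g})|,|\Fwi(\vc{g})|\le\sum_{t}|g_t|\le\mu_0\|\vc{g}\|_{\Hk}$, which is exactly the claimed operator-norm estimate; continuity of $\Fw=\Fwr+\Jimage\Fwi$ follows immediately as a linear combination. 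Part~2 is then a direct application of the Riesz representation theorem to the bounded linear functionals $\Fwr,\Fwi$ on the (real) Hilbert space $\Hk$, producing unique representers $\phiwr,\phiwi\in\Hk$ satisfying \eqref{eqn:Fwr_Fwi_inner_product}.

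For part~3 I would identify these representers by evaluating them at a kernel section. Combining the reproducing property $\phiwrk{t}=\inner{\phiwr}{\kernel_t}_{\Hk}$ with the representation of part~2 taken at $\vc{g}=\kernel_t$ gives $\phiwrk{t}=\Fwr(\kernel_t)$; since the $s$-th entry of $\kernel_t$ is $\kernel(t,s)$, expanding the definition of $\Fwr$ produces $\phiwrk{t}=\sum_{s\in\Zbb_+}\kernel(t,s)\cos(\omega s)$, and the same manipulation gives the stated formula for $\phiwik{t}$ as well as the continuous-time integral expressions.

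The one genuinely load-bearing step is the absolute-convergence argument underlying part~1: the well-definedness of the functionals, the applicability of Riesz, and the termwise expansion of $\Fwr(\kernel_t)$ in part~3 all rest on the pointwise bound $|g_t|\le\|\vc{g}\|_{\Hk}\,\kernel(t,t)^{1/2}$ combined with $\mu_0<\infty$. Once this estimate is secured, the remaining steps are essentially routine.
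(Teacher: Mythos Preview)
Your proposal is correct and follows essentially the same approach as the paper: the pointwise estimate $|g_t|\le\|\vc{g}\|_{\Hk}\,\kernel(t,t)^{1/2}$ via the reproducing property and Cauchy--Schwarz, the resulting bound by $\mu_0\|\vc{g}\|_{\Hk}$, the Riesz representation theorem for part~2, and the identification $\phiwrk{t}=\inner{\phiwr}{\kernel_t}_{\Hk}=\Fwr(\kernel_t)$ for part~3 are exactly what the paper uses. The only cosmetic difference is that the paper first handles the complex functional $\Fw$ and then extracts $\Fwr,\Fwi$ as its real and imaginary parts, whereas you treat $\Fwr,\Fwi$ directly; both routes are equivalent.
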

\begin{proof}
See Appendix \ref{sec:appendix_proof_lem_Fw}.
\end{proof}
Let $\Omega\subseteq\Omega_{\Tbb}$ and $\eta\in\Rbb_+$. 
Define set $\Gscr_{\kernel}(\rho,\Omega)$ as 
\begin{equation}\label{eqn:G_rho_Omega}
\Gscr_{\kernel}(\eta,\Omega) 
:=
\bigg\{\vc{g}\in\Hk\ \bigg|\ |\Fw(\vc{g})|\le \eta, \forall \omega \in \Omega\bigg\}. 
\end{equation}
One can see that the feasible set in \eqref{eqn:opt_2} is an especial case of this set. 
Based on Lemma \ref{lem:Fw}, we study the main properties of $\Gscr_{\kernel}(\eta,\Omega)$ in the next theorem. 
\begin{theorem}\label{thm:G_convex}
Let $\mu_0<\infty$. Then, $\Gscr_{\kernel}(\eta,\Omega)$ is non-empty, closed and convex.
\end{theorem}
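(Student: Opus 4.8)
The plan is to read off all three properties from Lemma~\ref{lem:Fw}, whose content is that each $\Fw\colon\Hk\to\Cbb$ is \emph{linear} and \emph{continuous}. The key structural observation is that $\Gscr_{\kernel}(\eta,\Omega)$ is an intersection of single-frequency constraint sets,
\begin{equation*}
\Gscr_{\kernel}(\eta,\Omega)=\bigcap_{\omega\in\Omega}\Ccal_\omega,
\qquad
\Ccal_\omega:=\big\{\vc{g}\in\Hk\ \big|\ |\Fw(\vc{g})|\le\eta\big\}.
\end{equation*}
I would therefore reduce the theorem to showing that each $\Ccal_\omega$ is closed and convex, and then invoke the elementary fact that an arbitrary intersection of closed (resp.\ convex) sets is again closed (resp.\ convex); non-emptiness is handled separately and directly.

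For \textbf{non-emptiness}, I would simply exhibit the zero impulse response: by linearity of $\Fw$ we have $\Fw(\zero)=0$, hence $|\Fw(\zero)|=0\le\eta$ for every $\omega\in\Omega$, using that $\eta\in\Rbb_+$. Thus $\zero\in\Gscr_{\kernel}(\eta,\Omega)$.

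For \textbf{convexity}, fix $\omega\in\Omega$, take $\vc{g}_1,\vc{g}_2\in\Ccal_\omega$ and $\theta\in[0,1]$. Using linearity of $\Fw$ followed by the triangle inequality for the modulus on $\Cbb$,
\begin{equation*}
\big|\Fw(\theta\vc{g}_1+(1-\theta)\vc{g}_2)\big|
=\big|\theta\,\Fw(\vc{g}_1)+(1-\theta)\,\Fw(\vc{g}_2)\big|
\le\theta\,|\Fw(\vc{g}_1)|+(1-\theta)\,|\Fw(\vc{g}_2)|\le\eta,
\end{equation*}
so $\Ccal_\omega$ is convex; equivalently, $\vc{g}\mapsto|\Fw(\vc{g})|$ is a seminorm on $\Hk$ and $\Ccal_\omega$ is one of its sublevel sets. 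For \textbf{closedness}, I would invoke part~1) of Lemma~\ref{lem:Fw}: since $\Fwr$ and $\Fwi$ are continuous, the map $\vc{g}\mapsto|\Fw(\vc{g})|=\big(\Fwr(\vc{g})^2+\Fwi(\vc{g})^2\big)^{1/2}$ is continuous on $\Hk$, and $\Ccal_\omega$ is the preimage of the closed interval $[0,\eta]$ under this continuous map, hence closed. Intersecting over $\omega\in\Omega$ then yields all three assertions.

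There is no genuine obstacle here; the analytic substance is entirely absorbed into Lemma~\ref{lem:Fw}, and the remaining steps are standard consequences of linearity, continuity, and convexity. The one point worth flagging is that $\Omega$ is generally uncountable (it may be all of $[0,\pi]$ or of $\Rbb_+$), but this is harmless precisely because both closedness and convexity are preserved under intersections of arbitrary cardinality. As an alternative presentation, closedness and convexity could be phrased through the Riesz representers of part~2), writing $|\Fw(\vc{g})|^2=\inner{\phiwr}{\vc{g}}_{\Hk}^2+\inner{\phiwi}{\vc{g}}_{\Hk}^2$, but this reformulation does not shorten the argument.
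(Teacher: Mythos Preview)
Your proof is correct. Both you and the paper begin identically: write $\Gscr_{\kernel}(\eta,\Omega)=\bigcap_{\omega\in\Omega}\Gscr_{\kernel}(\eta,\{\omega\})$, reduce to showing each single-frequency set is closed and convex, and exhibit $\zero$ for non-emptiness. The divergence is in how each $\Ccal_\omega=\Gscr_{\kernel}(\eta,\{\omega\})$ is handled. You argue directly that $\vc{g}\mapsto|\Fw(\vc{g})|$ is a continuous seminorm on $\Hk$ (linearity from Lemma~\ref{lem:Fw}, continuity from the boundedness of $\Fwr,\Fwi$), so its sublevel sets are automatically closed and convex. The paper instead passes to the Riesz representers $\phiwr,\phiwi$ and attempts a further geometric decomposition of $\Ccal_\omega$ as an intersection, over $\theta\in[0,\frac{\pi}{2}]$, of ``rectangular'' sets $\{|\inner{\phiwr}{\vc{g}}|\le\eta\cos\theta,\ |\inner{\phiwi}{\vc{g}}|\le\eta\sin\theta\}$, each of which is an intersection of four closed half-spaces. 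Your route is shorter and more robust: it uses only the abstract fact that a continuous seminorm has closed convex sublevel sets, and avoids the extra parametrization entirely. (Indeed, the paper's $\theta$-decomposition as written is delicate: taking $\theta=0$ forces $\inner{\phiwi}{\vc{g}}=0$, so the literal intersection over all $\theta\in[0,\frac{\pi}{2}]$ is strictly smaller than $\Ccal_\omega$; the correct relation is a union, which would not immediately yield convexity. Your argument sidesteps this issue altogether.)
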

\begin{proof}
From \eqref{eqn:G_rho_Omega}, one can see that
\begin{equation*}
\begin{split}
&\Gscr_{\kernel}(\eta,\Omega) 
=
\{\vc{g}\in\Hk\ |\ |\Fw(\vc{g})|\le \eta, \forall \omega \in \Omega\} 
\\&= \bigcap\limits_{\omega\in\Omega} \  
\{\vc{g}\in\Hk\ |\ |\Fw(\vc{g})|\le \eta\}
=\bigcap\limits_{\omega\in\Omega} \ 
\Gscr_{\kernel}(\eta,\{\omega\}).
\end{split}
\end{equation*}
Therefore, it is enough to show that, for any $\omega\in\Omega_{\Tbb}$, $\Gscr_{\kernel}(\eta,\{\omega\})$ is a closed and convex subset of $\Hk$. 
Since $|\Fw(\vc{g})|^2 = |\Fwr(\vc{g})|^2 + |\Fwi(\vc{g})|^2$, we have that
\begin{equation}
\Gcal_{\kernel}(\eta,\{\omega\})
=
\bigg\{\vc{g}\in\Hk\ \bigg|\ |\Fwr(\vc{g})|^2 + |\Fwi(\vc{g})|\le \eta^2\bigg\}.
\end{equation}
Due to Lemma \ref{lem:Fw}, we know that  $\Fwr(\vc{g})=\inner{\phiwr}{\vc{g}}_{\Hk}$ and $\Fwi(\vc{g})=\inner{\phiwi}{\vc{g}}_{\Hk}$, for any $\vc{g}\in\Hk$. 
With respect to each $\theta\in[0,\frac{\pi}{2}]$, define set $\Gcal_{\kernel}(\eta,\omega,\theta)$ as
\begin{equation*}
		\Gcal_{\kernel}(\eta,\omega,\theta)
		\!:=\!
		\big\{\!\vc{g}\!\in\!\Hk
		\big| 
		|\inner{\phiwr}{\vc{g}}|\! \le \! \eta\cos\theta,
		|\inner{\phiwi}{\vc{g}}|\! \le \! \eta\sin\theta
		\big\}.
\end{equation*}
Note that $\Gscr_{\kernel}(\eta,\omega,\theta)$
is the intersection of \vspace{-2mm}
\begin{equation}\vspace{-2mm}
\begin{split}
&\{\vc{g}\!\in\!\Hk|\inner{\phiwr}{\vc{g}}_{\Hk}\! \le\! \eta\cos\theta\},
\\&\{\vc{g}\!\in\!\Hk|\inner{\phiwr}{\vc{g}}_{\Hk}\! \ge\! -\eta\cos\theta\},
\\&\{\vc{g}\!\in\!\Hk|\inner{\phiwi}{\vc{g}}_{\Hk}\! \le\! \eta\sin\theta\},
\\&\{\vc{g}\!\in\!\Hk|\inner{\phiwi}{\vc{g}}_{\Hk}\! \ge\! -\eta\sin\theta\},
\end{split}
\end{equation}
which are closed half-spaces in $\Hk$. 
Therefore, $\Gcal_{\kernel}(\eta,\omega,\theta)$ is a closed and convex set. 
Since we have
$ 
	\Gcal_{\kernel}(\eta,\{\omega\})
	=
	\bigcap_{\theta\in[0,\frac{\pi}{2}]}
	\Gcal_{\kernel}(\eta,\omega,\theta),
$ 
the set $\Gcal_{\kernel}(\eta,\{\omega\})$ is closed and convex as well. 
Note that for $\zero\in\Hk$ and any $\omega\in\Omega$, we have $\Fw(\zero)=0$. 
Therefore, $|\Fw(\zero)|=0\le \eta$ and subsequently, $\zero\in\Gscr(\eta,\Omega)$.
This shows that $\Gscr(\eta,\Omega)$ is a non-empty set and concludes the proof.
\end{proof}
Before proceeding to the main theorem of this section, we need to present an auxiliary lemma.
\begin{lemma}\label{lem:Lui_bounded}
Let $\mu_0<\infty$ and $\vc{u}\in\Lscr_{\infty}$. 
Then, for any $t\in\Tbb$, the map $\Lu{t}:\Hk\to\Rbb$, defined in \eqref{eqn:Lut_Z} and  \eqref{eqn:Lut_R}, is linear and continuous with 
$\|\Lu{t}\|_{\Lcal(\Hk,\Rbb)}\le\mu_0 \|\vc{u}\|_{\infty}$.
Also, there exists unique $\phiu{t}:=(\phiu{t,s})_{s\in\Tbb}\in \Hk$ such that $\Lu{t}(\vc{g}) =\inner{\phiu{t}}{\vc{g}}_{\Hk}$, for any $\vc{g}\in\Hk$.
Moreover, for any $s\in\Tbb$, we have
\begin{equation}\label{eqn:phi_u_ts}
\phiu{t,s} = 
\begin{cases}
\sum_{\tau\in\Zbb_+}\kernel(s,\tau) u_{t-\tau},
& \text{ if } \Tbb=\Zbb_+,\\
\int_{\Rbb_+}\kernel(s,\tau) u_{t-\tau}\drm \tau,
& \text{ if } \Tbb=\Rbb_+.\\
\end{cases}
\end{equation}
\end{lemma}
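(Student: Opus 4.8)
The plan is to treat $\Lu{t}$ exactly as the frequency-domain functionals $\Fwr,\Fwi$ were treated in Lemma \ref{lem:Fw}: establish boundedness through the reproducing property, invoke the Riesz representation theorem to obtain $\phiu{t}$, and then recover its coordinates by evaluating the functional on the kernel sections.

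First I would record the pointwise estimate that drives everything. For any $\vc{g}=(g_s)_{s\in\Tbb}\in\Hk$ and any $s\in\Tbb$, the reproducing property of Theorem \ref{thm:RKHS_mercer} gives $g_s=\inner{\vc{g}}{\kernel_s}_{\Hk}$, and since $\|\kernel_s\|_{\Hk}^2=\inner{\kernel_s}{\kernel_s}_{\Hk}=\kernel(s,s)$, the Cauchy--Schwarz inequality yields $|g_s|\le\|\vc{g}\|_{\Hk}\,\kernel(s,s)^{\frac{1}{2}}$. Combined with $\vc{u}\in\Lscrinfty$, this bounds the summand (resp. integrand) in the definition of $\Lu{t}$ by $\|\vc{u}\|_{\infty}\|\vc{g}\|_{\Hk}\kernel(s,s)^{\frac{1}{2}}$. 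Summing (resp. integrating) over $s$ and recalling the definition of $\mu_0$ in \eqref{eqn:mu_n} (with $n=0$, so that $s^0=1$) shows that the series/integral defining $\Lu{t}(\vc{g})$ converges absolutely, so $\Lu{t}$ is well defined on all of $\Hk$, and that $|\Lu{t}(\vc{g})|\le\mu_0\|\vc{u}\|_{\infty}\|\vc{g}\|_{\Hk}$. Linearity of $\Lu{t}$ is immediate from the linearity of the sum/integral, and the displayed bound gives $\|\Lu{t}\|_{\Lcal(\Hk,\Rbb)}\le\mu_0\|\vc{u}\|_{\infty}$, i.e.\ continuity.

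Next, since $\Lu{t}$ is a bounded linear functional on the Hilbert space $\Hk$, the Riesz representation theorem provides a unique $\phiu{t}\in\Hk$ with $\Lu{t}(\vc{g})=\inner{\phiu{t}}{\vc{g}}_{\Hk}$ for every $\vc{g}\in\Hk$. To identify its coordinates, I would apply this identity to $\vc{g}=\kernel_s$ (which lies in $\Hk$ by Theorem \ref{thm:RKHS_mercer}) and use the reproducing property once more, obtaining $\phiu{t,s}=\inner{\phiu{t}}{\kernel_s}_{\Hk}=\Lu{t}(\kernel_s)$. Evaluating $\Lu{t}$ on the section $\kernel_s$, whose $\tau$-th entry is $\kernel(s,\tau)$, then yields the stated closed form: $\sum_{\tau\in\Zbb_+}\kernel(s,\tau)u_{t-\tau}$ when $\Tbb=\Zbb_+$ and $\int_{\Rbb_+}\kernel(s,\tau)u_{t-\tau}\,\drm\tau$ when $\Tbb=\Rbb_+$.

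The only genuinely delicate point, and the one I would settle first, is the absolute-convergence/integrability step: everything downstream---well-definedness of $\Lu{t}$ on $\Hk$, the operator-norm bound, and the legitimacy of reading off coordinates by testing against sections---rests on the single estimate $|g_s|\le\|\vc{g}\|_{\Hk}\kernel(s,s)^{\frac{1}{2}}$ together with the hypothesis $\mu_0<\infty$. The continuous-time case is identical once one observes that this same estimate also secures measurability and integrability of $s\mapsto g_s u_{t-s}$, so no idea beyond replacing sums by integrals is required.
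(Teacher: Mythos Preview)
Your proposal is correct and follows essentially the same approach as the paper's proof: bound $|g_s|$ via the reproducing property and Cauchy--Schwarz, use $\mu_0<\infty$ and $\vc{u}\in\Lscrinfty$ to get absolute convergence and the operator-norm estimate, invoke Riesz for the representer $\phiu{t}$, and read off its coordinates by testing against kernel sections. The only cosmetic difference is that the paper writes the estimate directly inside the integral rather than isolating $|g_s|\le\|\vc{g}\|_{\Hk}\kernel(s,s)^{1/2}$ as a preliminary step.
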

\begin{proof}
	See Appendix \ref{sec:appendix_proof_lem_Lui_bounded}.
\end{proof}
\begin{theorem}\label{thm:opt_unique_convex}
Let $\mu_0>0$ and consider the following program 
\begin{equation}\label{eqn:opt_3}
\minOp_{\vc{g}\in\Gscr_{\kernel}(\eta,\Omega)}\ 
\sumOp_{t\in\Tscr}(\Lu{t}(\vc{g})-y_t)^2 + \lambda\ \! \|\vc{g}\|_{\Hk}^2.
\end{equation}
Then, \eqref{eqn:opt_3} is a convex optimization problem with a \emph{unique} solution $\gstar_{\Omega}$.
Moreover, we have 
\begin{equation}\label{eqn:gstar_Omega_bound}
\|\gstar_{\Omega}\|_{\Hk}\ \! \le\ \! \big(\frac{1}{\lambda}\sum_{t\in\Tscr}y_t^2\big)^{\frac12}.	
\end{equation}
\end{theorem}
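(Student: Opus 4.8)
The plan is to solve \eqref{eqn:opt_3} by the direct method: I would show the objective is strongly convex and coercive while the feasible set is weakly closed, conclude existence of a unique minimizer, and then obtain the norm bound by a one-line comparison. Throughout, write $J(\vc{g}):=\sum_{t\in\Tscr}(\Lu{t}(\vc{g})-y_t)^2+\lambda\|\vc{g}\|_{\Hk}^2$ for the objective.

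First I would record the convexity structure. By Lemma~\ref{lem:Lui_bounded} each map $\Lu{t}:\Hk\to\Rbb$ is linear and continuous, so $\vc{g}\mapsto(\Lu{t}(\vc{g})-y_t)^2$ is the square of a continuous affine functional and hence convex and continuous; summing over the finite set $\Tscr$ preserves both properties. Since $\lambda>0$, the term $\lambda\|\vc{g}\|_{\Hk}^2$ is continuous and strongly convex, so $J$ is continuous and strongly convex, in particular strictly convex. By Theorem~\ref{thm:G_convex} the feasible set $\Gscr_{\kernel}(\eta,\Omega)$ is non-empty, closed, and convex, and being convex and closed in the Hilbert space $\Hk$ it is also weakly closed. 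This already establishes that \eqref{eqn:opt_3} is a convex program.

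The hard part is existence, since in the infinite-dimensional space $\Hk$ closed bounded sets need not be compact, so I would invoke weak compactness instead. Because the loss term is non-negative, $J(\vc{g})\ge\lambda\|\vc{g}\|_{\Hk}^2$, so $J$ is coercive. Taking a minimizing sequence $(\vc{g}_n)_n\subseteq\Gscr_{\kernel}(\eta,\Omega)$, coercivity bounds it in norm, and by weak sequential compactness of norm-bounded sets in a Hilbert space a subsequence converges weakly to some limit $\gstar_{\Omega}$, which lies in $\Gscr_{\kernel}(\eta,\Omega)$ because that set is weakly closed. A convex, norm-continuous functional is weakly lower semicontinuous, so $J(\gstar_{\Omega})\le\liminf_n J(\vc{g}_n)=\inf_{\vc{g}\in\Gscr_{\kernel}(\eta,\Omega)}J(\vc{g})$, which shows $\gstar_{\Omega}$ attains the minimum. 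Uniqueness then follows from strict convexity on the convex feasible set: two distinct minimizers would make their midpoint feasible with strictly smaller objective value, a contradiction.

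Finally, for the bound \eqref{eqn:gstar_Omega_bound} I would compare the optimal value with the value at the origin. As noted in the proof of Theorem~\ref{thm:G_convex}, $\zero\in\Gscr_{\kernel}(\eta,\Omega)$, and since $\Lu{t}(\zero)=0$ we have $J(\zero)=\sum_{t\in\Tscr}y_t^2=\|\vcy\|^2$. By optimality of $\gstar_{\Omega}$ and non-negativity of the loss term, $\lambda\|\gstar_{\Omega}\|_{\Hk}^2\le J(\gstar_{\Omega})\le J(\zero)=\|\vcy\|^2$, and taking square roots gives $\|\gstar_{\Omega}\|_{\Hk}\le\lambda^{-1/2}\|\vcy\|$, as claimed.
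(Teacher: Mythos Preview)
Your proof is correct and follows essentially the same route as the paper: both invoke Lemma~\ref{lem:Lui_bounded} and Theorem~\ref{thm:G_convex} for the convexity structure, use strong convexity from the $\lambda\|\cdot\|_{\Hk}^2$ term, and obtain \eqref{eqn:gstar_Omega_bound} by comparing with $J(\zero)=\|\vcy\|^2$. The only difference is packaging: the paper absorbs the constraint into an indicator $\delta_{\Gscr_{\kernel}(\eta,\Omega)}$ and cites a textbook result that proper, lower semi-continuous, strongly convex functionals on a Hilbert space admit a unique minimizer, whereas you unpack that black box by running the direct method (coercivity, weak compactness, weak lower semicontinuity) explicitly.
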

\begin{proof} 
Define $\Jcal:\Hk\to\Rbb\cup\{+\infty\}$ such that for any $\vc{g}\in\Hk$ we have
\begin{equation}\label{eqn:J}
\Jcal(\vc{g}) 
= 
\sum_{t\in\Tscr}(\Lu{t}(\vc{g})-y_t)^2 + \lambda \|g\|^2_{\Hk} + \delta_{\Gscr_{\kernel}(\eta,\Omega)}(\vc{g}).
\end{equation}
Since $\zero \in \Gscr_{\kernel}(\eta,\Omega)$,  we have $\delta_{\Gscr_{\kernel}(\eta,\Omega)}(\zero)=0$.
Due to the definition of $\Lu{t}$ in \eqref{eqn:Lut_Z} and \eqref{eqn:Lut_R}, one has that $\Lu{t}(\zero)=0$, for each $t\in\Tscr$, and subsequently, we have
$ 
\Jcal(\zero)=\sum_{t\in\Tscr}y_t^2<\infty.
$ 
From Theorem \ref{thm:G_convex}, we know that $\Gscr_{\kernel}(\eta,\Omega)$ is a convex and closed set, and consequently, $\delta_{\Gscr_{\kernel}(\eta,\Omega)}$ is a proper lower semi-continuous convex function \cite{peypouquet2015convex}. 
Due to Lemma \ref{lem:Lui_bounded}, we know that $\Lu{t}:\Hk\to\Rbb$ is a continuous linear map, for each $t\in\Tscr$. Therefore, function  $\Lcal_{\Dscr}:\Hk\to \Rbb$, defined in \eqref{eqn:LD}, is a convex and continuous function.
Since $\lambda>0$, we know that $\Jcal$ is a proper and lower semi-continuous strongly convex function.
Therefore, $\min_{g\in\Hk}\Jcal(g)$ has a unique (finite) solution \cite{peypouquet2015convex}, and subsequently, \eqref{eqn:opt_3} is a convex program with a unique solution $\gstar_{\Omega}$ with finite cost.
Since $\zero \in \Gscr_{\kernel}(\eta,\Omega)$ and due to optimality of $\gstar_{\Omega}$, we have $\Jcal(\gstar_{\Omega})\le \Jcal(\zero)$. 
Subsequently, one can see
\begin{equation*}
\lambda\|\gstar_{\Omega}\|^2 
\le
\sumOp_{t\in\Tscr}(\Lu{t}(\gstar_{\Omega})-y_t)^2 + \lambda\ \! \|\gstar_{\Omega}\|^2 
\le 
\Jcal(\zero) = 
\sum_{t\in\Tscr}y_t^2,	
\end{equation*}
which induces \eqref{eqn:gstar_Omega_bound}.
This concludes the proof.
\end{proof}
\begin{corollary}\label{cor:opt_2_unique_convex}
Let assume $\mu_0<\infty$. Then, \eqref{eqn:opt_2} is a convex optimizations with a unique solution denoted by $\gstar$. Moreover, $\gstar$ satisfies inequality \eqref{eqn:gstar_Omega_bound}. Similar property holds for \eqref{eqn:opt_1}.
\end{corollary}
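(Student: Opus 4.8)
The plan is to recognize \eqref{eqn:opt_2} as a special instance of the program \eqref{eqn:opt_3} already treated in Theorem \ref{thm:opt_unique_convex}, and to read off the conclusion directly. First I would observe that the feasible set of \eqref{eqn:opt_2}, namely $\{\vc{g}\in\Hk\mid |\Fw(\vc{g})|\le \rho,\ \forall \omega\in\Omega_{\Tbb}\}$, is exactly the set $\Gscr_{\kernel}(\eta,\Omega)$ defined in \eqref{eqn:G_rho_Omega} upon taking $\eta=\rho$ and $\Omega=\Omega_{\Tbb}$. Since the objective functional $\sum_{t\in\Tscr}(\Lu{t}(\vc{g})-y_t)^2 + \lambda\|\vc{g}\|_{\Hk}^2$ is identical in both problems, \eqref{eqn:opt_2} coincides with \eqref{eqn:opt_3} for this choice of $\eta$ and $\Omega$.

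Next I would check that the hypotheses of Theorem \ref{thm:opt_unique_convex} are in force. The corollary assumes $\mu_0<\infty$, which is precisely the condition required by Theorem \ref{thm:G_convex} (guaranteeing that $\Gscr_{\kernel}(\rho,\Omega_{\Tbb})$ is non-empty, closed and convex) and by Lemma \ref{lem:Lui_bounded} (guaranteeing that each $\Lu{t}$ is a continuous linear functional on $\Hk$); these are the only ingredients the proof of Theorem \ref{thm:opt_unique_convex} actually uses. Applying that theorem with $\eta=\rho$ and $\Omega=\Omega_{\Tbb}$ then yields that \eqref{eqn:opt_2} is convex and admits a unique solution $\gstar:=\gstar_{\Omega_{\Tbb}}$ satisfying the bound \eqref{eqn:gstar_Omega_bound}.

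Finally, for \eqref{eqn:opt_1} I would invoke the identity $G(\Jimage\omega)=\Fw(\vc{g})$ recorded just before \eqref{eqn:opt_2}, which shows that the constraint $|G(\Jimage\omega)|\le\rho$ and the constraint $|\Fw(\vc{g})|\le\rho$ describe the same feasible set; since the two problems also share the same objective, \eqref{eqn:opt_1} and \eqref{eqn:opt_2} are literally the same optimization problem, so they inherit the same unique solution and the same bound.

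The argument carries essentially no difficulty: the entire content is the bookkeeping that matches \eqref{eqn:opt_2} to the instance $(\eta,\Omega)=(\rho,\Omega_{\Tbb})$ of \eqref{eqn:opt_3}. The only point deserving care is the apparent mismatch between the hypothesis $\mu_0>0$ stated in Theorem \ref{thm:opt_unique_convex} and the hypothesis $\mu_0<\infty$ stated here; I would resolve this by noting that the proof of Theorem \ref{thm:opt_unique_convex} relies only on $\mu_0<\infty$ through Theorem \ref{thm:G_convex} and Lemma \ref{lem:Lui_bounded}, so the finiteness assumption is the operative one and the corollary's hypotheses suffice.
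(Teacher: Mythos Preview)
Your proposal is correct and follows essentially the same approach as the paper: set $\eta=\rho$ and $\Omega=\Omega_{\Tbb}$ in Theorem~\ref{thm:opt_unique_convex} to handle \eqref{eqn:opt_2}, then invoke the equivalence of \eqref{eqn:opt_1} and \eqref{eqn:opt_2}. Your remark about the $\mu_0>0$ versus $\mu_0<\infty$ discrepancy is also apt---the paper's proof of Theorem~\ref{thm:opt_unique_convex} indeed only uses finiteness of $\mu_0$.
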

\begin{proof}
In Theorem \ref{thm:opt_unique_convex}, set $\Omega$ and $\eta$  respectively to $\Omega_{\Tbb}$ and $\rho$. 
Then, the convexity of \eqref{eqn:opt_2} as well as the existence and uniqueness of its solution is directly concluded.
Since \eqref{eqn:opt_1} and \eqref{eqn:opt_2} are equivalent,
the same claim holds for \eqref{eqn:opt_1}.
\end{proof}
In the  above discussion, we have assumed that $\mu_0<\infty$. The next theorem shows that the boundedness of $\mu_0$ is a valid assumption for most of the stable kernels introduced in the literature, especially it holds for TC kernel. 
\begin{theorem}\label{lem:bound_mu_n}
	Consider kernel $\kernel$ and assume there exist $\beta,\gamma> 0$ such that $|\kernel(s,t)|\le \gamma \expe^{-\beta\max(s,t)}$, for any $s,t\in\Tbb$. Then, for any $n\in\Zbb_+$, we have $\mu_n\le	\gamma^{\frac12}(\frac{2}{\beta})^{n+1}n!$, 
	where $n!:=\prod_{k=1}^nk$, when $n\ge 1$, and, $0!:=1$.
\end{theorem}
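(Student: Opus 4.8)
The plan is to collapse both the discrete and the continuous case to a single scalar moment estimate and then evaluate that moment. First I would restrict the hypothesis $|\kernel(s,t)|\le\gamma\expe^{-\beta\max(s,t)}$ to the diagonal $s=t$: since $\max(t,t)=t$, this gives $\kernel(t,t)\le\gamma\expe^{-\beta t}$, and therefore $\kernel(t,t)^{\frac12}\le\gamma^{\frac12}\expe^{-\beta t/2}$ for every $t\in\Tbb$. Substituting this pointwise bound into the definition \eqref{eqn:mu_n} of $\mu_n$ pulls the factor $\gamma^{\frac12}$ outside and leaves me to bound the $n$th moment of the decaying exponential $\expe^{-\beta t/2}$, namely $\sum_{t\in\Zbb_+}t^n\expe^{-\beta t/2}$ when $\Tbb=\Zbb_+$ and $\int_{\Rbb_+}t^n\expe^{-\beta t/2}\,\drm t$ when $\Tbb=\Rbb_+$. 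The announced constant $(2/\beta)^{n+1}n!$ is precisely what this exponential moment should yield.

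For the continuous case the estimate is exact and easy. I would substitute $u=\beta t/2$, which turns the integral into $(2/\beta)^{n+1}\int_{\Rbb_+}u^n\expe^{-u}\,\drm u=(2/\beta)^{n+1}\,\Gamma(n+1)=(2/\beta)^{n+1}n!$; equivalently one runs the one-line induction $I_n=(2n/\beta)\,I_{n-1}$ obtained by integrating by parts, started from $I_0=2/\beta$. Either route gives $\mu_n\le\gamma^{\frac12}(2/\beta)^{n+1}n!$, with equality for the continuous TC kernel \eqref{eqn:TC_kernel}.

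The discrete case is where the actual work lies, because the summand $t^n\expe^{-\beta t/2}$ is \emph{not} monotone: it increases on $[0,2n/\beta]$ and decreases afterwards, so the series cannot be dominated term by term by the corresponding integral. I see two complementary routes. The clean but slightly lossy one uses the generating-function identity $\sum_{t\in\Zbb_+}\binom{t+n}{n}x^t=(1-x)^{-(n+1)}$ with $x=\expe^{-\beta/2}$, together with the elementary inequality $t^n\le n!\binom{t+n}{n}$; this instantly delivers finiteness and the correct factorial growth, with constant $(1-\expe^{-\beta/2})^{-(n+1)}$. To recover the sharper stated constant one instead compares the series directly with $\int_{\Rbb_+}t^n\expe^{-\beta t/2}\,\drm t$: writing the difference as a sum over unit intervals and splitting at the mode $t=2n/\beta$, the over-counting before the peak is offset by the under-counting after it, and the comparison closes using convexity of $\expe^{-\beta t/2}$ (for the leading $n=1$ instance this reduces to the inequality $\beta/2\le 2\sinh(\beta/4)$, which holds because $\sinh x\ge x$).

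The main obstacle is therefore this discrete sum-versus-integral comparison, i.e. matching the exact constant $(2/\beta)^{n+1}$ rather than the weaker $(1-\expe^{-\beta/2})^{-(n+1)}$; the small-$n$ (in particular geometric) terms are checked separately. Once the comparison is settled, specializing to $\gamma=1$ and $\beta=-\ln\alpha$ verifies the hypothesis for the TC kernel \eqref{eqn:TC_kernel}, so that $\mu_n<\infty$ for every $n\in\Zbb_+$, which is exactly the finiteness assumed in the preceding lemmas and theorems.
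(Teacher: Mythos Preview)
Your continuous-time argument is exactly the paper's: bound the diagonal by $\gamma\,\expe^{-\beta t}$, substitute $u=\beta t/2$, and recognize the Gamma integral.

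For the discrete case you are right to flag the comparison as the real issue, and in fact the paper's own proof is defective here. The paper asserts
\[
\sum_{t\in\Zbb_+} t^n(\gamma\,\expe^{-\beta t})^{1/2}\ \le\ \int_{\Rbb_+} t^n(\gamma\,\expe^{-\beta t})^{1/2}\,\drm t
\]
on the grounds that $\expe^{-\beta t/2}$ is non-increasing. But for a non-increasing summand the integral comparison goes the \emph{other} way ($\sum\ge\int$), and for $n\ge 1$ the summand $t^n\expe^{-\beta t/2}$ is not monotone, so the justification fails in either reading. Worse, the stated bound is actually false for $n=0$ when $\Tbb=\Zbb_+$: for the discrete TC kernel ($\gamma=1$, $\alpha=\expe^{-\beta}$) one has $\mu_0=\sum_{t\ge 0}\alpha^{t/2}=(1-\expe^{-\beta/2})^{-1}$, while the claimed bound is $2/\beta$; the elementary inequality $1-\expe^{-x}<x$ for $x>0$ gives $(1-\expe^{-\beta/2})^{-1}>2/\beta$ for every $\beta>0$. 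Hence your route (b), aimed at recovering the exact constant $(2/\beta)^{n+1}$ in discrete time, cannot close at $n=0$: no amount of peak-splitting or convexity will rescue an inequality that is false.

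What survives is precisely what the surrounding lemmas need, namely $\mu_n<\infty$, and your route (a) delivers it honestly: $t^n\le n!\binom{t+n}{n}$ together with $\sum_{t\ge 0}\binom{t+n}{n}x^t=(1-x)^{-(n+1)}$ gives $\mu_n\le \gamma^{1/2}\,n!\,(1-\expe^{-\beta/2})^{-(n+1)}$. This is the correct replacement for the discrete constant, and your instinct that the paper's one-line monotonicity argument was too quick is vindicated.
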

\begin{proof}
	See Appendix \ref{sec:appendix_proof_bound_mu_n}.
\end{proof}

Based on Corollary \ref{cor:opt_2_unique_convex}, we know that the optimization problem \eqref{eqn:opt_1} has a unique solution addressing Problem \ref{prob:Hinft_le_1}. 
Since this optimization problem is defined over an infinite-dimensional space with uncountably infinite number of constraints, obtaining this solution does not seem to be tractable at the current form. 
This is discussed in the next section.

\section{Towards a Tractable Scheme}\label{sec:Tractable}
In this section, we present a tractable approach to derive the solution of the nonparametric estimation problem introduced in Section \ref{sec:MainOpt}.

Consider optimization problem \eqref{eqn:opt_2}. For the ease of discussion and without loss of generality, we assume $\rho=1$.
More precisely, one can use a change of variable and replace $y_t$ with 
$\rho^{-1}y_t$ in $\Dscr$ and then, set $\rho=1$ in Problem \ref{prob:Hinft_le_1} as well as in \eqref{eqn:opt_2}, or equivalently in \eqref{eqn:opt_1}.  
With respect to each $\omega\in\Omega_{\Tbb}$, there exists a constraint in \eqref{eqn:opt_2}.
Thus, we have uncountably infinite number of constraints which makes the problem intractable. 
One possible approach to resolve this issue is approximating the problem by considering a only a suitable finite subset of $\Omega_{\Tbb}$. 
In order to investigate this possibility, we need the notion of {\em partition} introduced in the next definition.
\begin{definition}
We say $\Pscr$ is a {\em partition} of interval $[a,b]$ if $\Pscr$ is a finite subset of $[a,b]$ as $\Pscr=\{\omega_i\ |\ i=0,\ldots,\nP\}$ where 
\begin{equation}
    a = \omega_0<\omega_1<\ldots<\omega_{\nP}=b.
\end{equation}
With respect to partition $\Pscr$, the {\em mesh} of $\Pscr$, denoted by $\mesh(\Pscr)$, is defined as \begin{equation}
    \mesh(\Pscr):=\max\{|\omega_{i}-\omega_{i-1}|\ | \ i=1,2,\ldots,\nP\}.
\end{equation}
\end{definition}
Now, let $\Pscr=\{\omega_i\ |\ i=0,\ldots,\nP\}$ be a given partition. 
One can see that satisfying the constraints $|\Fw(\vc{g})|\le 1$, for all $w\in \Pscr$, does not necessarily imply that the desired feature $\sup_{\omega\in \Omega_{\Tbb}}|\Fw(\vc{g})|\le 1$.
More precisely, these constraints do not guarantee that for all $\omega\in \Omega_{\Tbb}\backslash\Pscr$ we have $|\Fw(\vc{g})|\le 1$ as well.
Accordingly, in order to approximate problem \eqref{eqn:opt_2},
we take $\epsilon> 0$ and consider the following problem as our approximation \begin{equation}\label{eqn:opt_4}
\begin{array}{cl}
    \minOp_{\vc{g}\in\Hk}
    &
    \sumOp_{t\in\Tscr}(\Lu{t}(\vc{g})-y_t)^2 + \lambda\ \! \|\vc{g}\|_{\Hk}^2,
    \\
    \mathrm{s.t.}
    & 
    |\Fw(\vc{g})|^2\le 1-\epsilon, \quad  \forall \omega\in\Pscr.
\end{array}
\end{equation}
In the following, we provide appropriate conditions on partition $\Pscr$ and $\epsilon>0$ 
for ensuring that the solution of \eqref{eqn:opt_4} satisfies $\sup_{\omega\in \Omega_{\Tbb}}|\Fw(\vc{g})|\le 1$. 
These conditions depend on the bandwidth of system $\vcg$ and the rate of changes for $|\Fw(\vcg)|$ with respect to $\omega$.
Accordingly, first we need to introduce necessary definitions and preliminaries.

\begin{definition}
Let $\vc{g}\in\Hk$ be a given impulse response. With respect to $\vc{g}$, we define {\em magnitude function} $\mg:\Omega_{\Tbb}\to\Rbb$ as $\mg(\omega)=|\Fw(\vc{g})|^2$. 
\end{definition}
Given impulse response $\vcg$, the function $\mg$ shows the squared magnitude of transfer function corresponding to $\vcg$ at different frequencies. The following lemma introduces a bound for the rate of changes of $\mg$ in terms of its Lipschitz constant. This will be used later in the analysis of problem \eqref{eqn:opt_4}. 
\begin{lemma}\label{lem:gm_Lipschitz}
Let $\mu_1,\mu_2<\infty$. Then, for any $\vc{g}\in\Hk$ and $\omega_1,\omega_2\in\Omega_{\Tbb}$, we have
\begin{equation}
    |\mg(\omega_2)-\mg(\omega_1)|\le L_{\vc{g}}|\omega_2-\omega_1|,
\end{equation}
where $L_{\vc{g}} := 4\mu_1\mu_2\|\vc{g}\|^2$.
\end{lemma}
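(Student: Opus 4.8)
The plan is to reduce everything to the factorization of $\mg$ into its real and imaginary parts. Writing $a(\omega):=\Fwr(\vc{g})$ and $b(\omega):=\Fwi(\vc{g})$, so that $\mg(\omega)=a(\omega)^2+b(\omega)^2$, I would use the difference-of-squares identity
\begin{equation*}
\mg(\omega_2)-\mg(\omega_1)=\big(a(\omega_2)-a(\omega_1)\big)\big(a(\omega_2)+a(\omega_1)\big)+\big(b(\omega_2)-b(\omega_1)\big)\big(b(\omega_2)+b(\omega_1)\big).
\end{equation*}
This turns the claim into four separate factor bounds. The two ``sum'' factors are controlled directly by part~1 of Lemma~\ref{lem:Fw}: since $|a(\omega)|,|b(\omega)|\le\mu_0\|\vc{g}\|_{\Hk}$ uniformly in $\omega$, we immediately get $|a(\omega_2)+a(\omega_1)|\le 2\mu_0\|\vc{g}\|_{\Hk}$ and the same for $b$.

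The second step is to prove that $\omega\mapsto a(\omega)$ and $\omega\mapsto b(\omega)$ are each Lipschitz with constant $\mu_1\|\vc{g}\|_{\Hk}$. For $\Tbb=\Zbb_+$ I would combine the elementary termwise estimates $|\cos(\omega_2 t)-\cos(\omega_1 t)|\le t\,|\omega_2-\omega_1|$ and $|\sin(\omega_2 t)-\sin(\omega_1 t)|\le t\,|\omega_2-\omega_1|$ with the reproducing-property bound $|g_t|=|\inner{\vc{g}}{\kernel_t}_{\Hk}|\le\|\vc{g}\|_{\Hk}\,\kernel(t,t)^{\frac12}$, which follows from Cauchy--Schwarz and $\|\kernel_t\|_{\Hk}^2=\kernel(t,t)$. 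This yields
\begin{equation*}
|a(\omega_2)-a(\omega_1)|\le |\omega_2-\omega_1|\sum_{t\in\Zbb_+}t\,|g_t|\le |\omega_2-\omega_1|\,\|\vc{g}\|_{\Hk}\sum_{t\in\Zbb_+}t\,\kernel(t,t)^{\frac12}=\mu_1\|\vc{g}\|_{\Hk}\,|\omega_2-\omega_1|,
\end{equation*}
and the identical bound for $b$; the case $\Tbb=\Rbb_+$ is handled verbatim with the sum replaced by the integral appearing in the definition of $\mu_1$. Feeding these four bounds into the identity above gives
\begin{equation*}
|\mg(\omega_2)-\mg(\omega_1)|\le 2\cdot\big(\mu_1\|\vc{g}\|_{\Hk}|\omega_2-\omega_1|\big)\big(2\mu_0\|\vc{g}\|_{\Hk}\big)=4\mu_0\mu_1\|\vc{g}\|_{\Hk}^2\,|\omega_2-\omega_1|,
\end{equation*}
which is exactly the asserted inequality with $L_{\vc{g}}=4\mu_0\mu_1\|\vc{g}\|_{\Hk}^2$.

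The main obstacle is not conceptual but a matter of justifying the termwise manipulations and the convergence of the series (resp. integral). Specifically, I must check that $\sum_{t\in\Zbb_+}t\,|g_t|$ (resp. $\int_{\Rbb_+}t\,|g_t|\,\drm t$) is finite, so that the termwise Lipschitz estimate may legitimately be summed and the Fourier expansions compared frequency-by-frequency. This is precisely where the hypothesis $\mu_1<\infty$ enters: the bound $|g_t|\le\|\vc{g}\|_{\Hk}\kernel(t,t)^{\frac12}$ gives $\sum_{t\in\Zbb_+}t\,|g_t|\le\mu_1\|\vc{g}\|_{\Hk}<\infty$. The companion hypothesis $\mu_0<\infty$ is what makes the uniform bounds on $a$ and $b$ available through Lemma~\ref{lem:Fw}. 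Once finiteness is secured, every step above is elementary.
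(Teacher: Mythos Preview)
Your argument is correct and reaches the exact constant $L_{\vc{g}}=4\mu_0\mu_1\|\vc{g}\|_{\Hk}^2$, but it proceeds along a different route from the paper. The paper first establishes that $\omega\mapsto\Fwr(\vc{g})$ and $\omega\mapsto\Fwi(\vc{g})$ are \emph{differentiable}, justifying term-by-term differentiation of the Fourier series (respectively, differentiation under the integral) via the Weierstrass $M$-test with dominating sequence $t\,\kernel(t,t)^{1/2}\|\vc{g}\|_{\Hk}$; it then bounds $|\ddomega\mg(\omega)|$ pointwise by $4\mu_0\mu_1\|\vc{g}\|_{\Hk}^2$ using the chain rule, and finally integrates from $\omega_1$ to $\omega_2$. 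You instead bypass differentiability entirely: the algebraic identity $a_2^2-a_1^2=(a_2-a_1)(a_2+a_1)$ together with the elementary Lipschitz bounds $|\cos(\omega_2 t)-\cos(\omega_1 t)|\le t|\omega_2-\omega_1|$ and $|\sin(\omega_2 t)-\sin(\omega_1 t)|\le t|\omega_2-\omega_1|$ give the difference estimate directly, with the only analytic ingredient being absolute convergence of $\sum_t t|g_t|$ (respectively $\int t|g_t|\,\drm t$), which you correctly derive from $\mu_1<\infty$. Your approach is more elementary and avoids the uniform-convergence machinery; the paper's approach yields the additional information that $\mg$ is $C^1$, but this is not used anywhere else in the paper, so nothing is lost by your shortcut.
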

\begin{proof}
See Appendix \ref{sec:appendix_proof_gm_Lipschitz}.
\end{proof}
For the case of discrete-time systems, i.e., $\Tbb=\Zbb_+$, the frequency range $\Omega_{\Tbb}$ is the bounded interval  $[0,\wmax]$, where $\wmax=\pi$.
In order to introduce analogous of $\wmax$ for the continuous-time systems, we need the next assumption. 

\begin{assumption}
\label{ass:lim_int_kst_cos_wst}
For the case $\Tbb=\Rbb_+$, we have
\begin{equation}\label{eqn:ass_lim_int_kst_cos_wst}
\lim_{\omega\to\infty}
\int_{\Rbb_+}\int_{\Rbb_+}
\kernel(s,t)\expe^{-\Jimage\omega(s-t)}\drm s\drm t = 0.
\end{equation}
\end{assumption}
This assumption says that the bandwidth of kernel $\kernel$ is bounded. Moreover, from \eqref{eqn:ass_lim_int_kst_cos_wst}, we know that there exists $\wmax\in\Rbb_+$ such that
\begin{equation}
\label{eqn:int_kst_cos_wst_le_1}
\bigg|\int_{\Rbb_+}\int_{\Rbb_+}
\kernel(s,t)\expe^{-\Jimage\omega(s-t)}\drm s\drm t\bigg| \le 
\frac{\lambda}{\sum_{t\in\Tscr}y_t^2},
\end{equation}
for all $\omega>\wmax$.
In the followings, it is shown that the frequency value $\wmax$ plays the role of an upper bound for the bandwidth of the system of interest.
The next theorem shows that for TC kernels Assumption \ref{ass:lim_int_kst_cos_wst} holds. One may show similar result for other stable kernels in the literature.
\begin{theorem}\label{thm:TC_assumption}
Let $\beta>0$ and $\kernel:\Rbb_+\times\Rbb_+\to\Rbb_+$ be the TC kernel $\kernel(s,t)=\expe^{-\beta\max(s,t)}$. Then, we have
\begin{equation}
\int_{\Rbb_+}\int_{\Rbb_+}
\kernel(s,t)\expe^{-\Jimage\omega(s-t)}\drm s\drm t = \frac{2}{\omega^2+\beta^2}.
\end{equation} 	
\end{theorem}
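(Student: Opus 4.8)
The plan is to evaluate the double integral directly, exploiting the piecewise structure of $\max(s,t)$ together with the symmetry of the integrand under interchanging $s$ and $t$.

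First I would argue that the integral is real-valued, so that the oscillatory factor may be replaced by a cosine. Swapping $s$ and $t$ leaves the domain $\Rbb_+\times\Rbb_+$ and the symmetric kernel $\kernel(s,t)=\kernel(t,s)$ unchanged while conjugating $\expe^{-\Jimage\omega(s-t)}$; hence the integral equals its own complex conjugate and is real. Equivalently, the imaginary part carries the odd factor $\sin(\omega(s-t))$ and integrates to zero over the symmetric domain. It therefore suffices to evaluate
\[
\int_{\Rbb_+}\int_{\Rbb_+}\kernel(s,t)\cos\!\big(\omega(s-t)\big)\,\drm s\,\drm t.
\]

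Next, since the integrand $\expe^{-\beta\max(s,t)}\cos(\omega(s-t))$ is itself symmetric in $s$ and $t$ (the diagonal being a null set), the integral equals twice its value on the half-region $\{\,s\ge t\,\}$, where $\max(s,t)=s$. The key manoeuvre is then the substitution $u=s-t$ in the inner integral, which decouples the double integral into the product
\[
2\Big(\int_{\Rbb_+}\expe^{-\beta t}\,\drm t\Big)\Big(\int_{\Rbb_+}\expe^{-\beta u}\cos(\omega u)\,\drm u\Big).
\]
The first factor equals $1/\beta$, and the second is the standard Laplace-type identity $\beta/(\omega^2+\beta^2)$, obtainable by taking the real part of $\int_{\Rbb_+}\expe^{-(\beta-\Jimage\omega)u}\,\drm u=(\beta-\Jimage\omega)^{-1}$. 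Multiplying yields $2/(\omega^2+\beta^2)$, as claimed.

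The computation is entirely elementary, so there is no serious obstacle; the only points needing care are convergence and the interchange of integrals. Because $\beta>0$, the nonnegative integrand $\expe^{-\beta\max(s,t)}$ is absolutely integrable over $\Rbb_+\times\Rbb_+$, with finite total mass $2/\beta^2$. This integrability is exactly what justifies the appeal to Fubini's theorem for the iterated integration, the symmetrization step, and the change of variables throughout.
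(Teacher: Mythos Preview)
Your argument is correct. The symmetry observation (to show the integral is real), the reduction to the half-region $\{s\ge t\}$, the substitution $u=s-t$, and the ensuing factorization are all valid; absolute integrability (total mass $2/\beta^2$) indeed legitimizes Fubini and the change of variables.

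Your route differs from the paper's. The paper does \emph{not} first argue realness or restrict to a half-region. Instead it writes $\max(s,t)=\tfrac12(s+t+|s-t|)$, substitutes $\tau=s-t$ over the full range $\tau\in[-t,\infty)$, and keeps the complex exponential throughout, obtaining
\[
\int_{\Rbb_+} e^{-\beta t}\!\int_{-t}^{\infty} e^{-\beta\tau^{+}-\Jimage\omega\tau}\,\drm\tau\,\drm t,
\]
then evaluates the inner integral in closed form and integrates again in $t$; the final partial-fraction simplification yields $2/(\omega^2+\beta^2)$. Your symmetry-plus-factorization approach is a bit more streamlined: once on $\{s\ge t\}$ the double integral separates into a product of two elementary one-dimensional integrals, avoiding the partial-fraction bookkeeping. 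It also handles $\omega=0$ without special treatment, whereas the paper's intermediate expressions contain factors of $1/(\Jimage\omega)$ and thus tacitly assume $\omega\ne 0$. Conversely, the paper's approach stays entirely within complex exponentials and never needs the preliminary ``realness'' argument, which may feel more direct to some readers.
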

\begin{proof}
See Appendix \ref{sec:appendix_proof_TC_assumption}.
\end{proof}
Based on the introduced notions and facts, we can present the main theorem of this section.

\begin{theorem}\label{thm:opt_unique_convex_eps}
Let $\mu_0,\mu_1<\infty$, $\Pscr$ be a given partition of $[0,\wmax]$ and $\epsilon>0$. Then, optimization problem \eqref{eqn:opt_4} is a convex program with a unique solution  $\gstareps$. For $\gstareps$, we have 
\begin{equation}\label{eqn:gepsstar_Omega_bound}
\|\gstareps\|_{\Hk}\ \! \le\ \! \Big(\frac{1}{\lambda}\sum_{t\in\Tscr}y_t^2\Big)^{\frac12}.	
\end{equation}
Moreover, if $\mesh(\Pscr)\le
\frac{2\epsilon}{L}$ where 
$L:=\frac{1}{\lambda} 4\mu_0\mu_1\sum_{t\in\Tscr}y_i^2$, 
then we have
\begin{equation}\label{eqn:Gepsstar_le_1}
\|\Gstareps\|_{\Hcal_\infty} = \sup_{\omega\in\Omega_{\Tbb}}|\Fw(\gstareps)|\le 1.	
\end{equation}
where $\Gstareps$ is the transfer function corresponding to $\gstareps$. 
\end{theorem}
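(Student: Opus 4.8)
The plan is to split the statement into three claims and dispatch them in order: convexity with a unique minimizer, the norm bound \eqref{eqn:gepsstar_Omega_bound}, and the uniform feasibility guarantee \eqref{eqn:Gepsstar_le_1}. The first two claims should follow almost verbatim from the machinery already set up for Theorem \ref{thm:opt_unique_convex}. Indeed, the feasible set of \eqref{eqn:opt_4} is exactly $\Gscr_{\kernel}(\sqrt{1-\epsilon},\Pscr)$ in the notation of \eqref{eqn:G_rho_Omega}, since the constraints $|\Fw(\vc{g})|^2\le 1-\epsilon$ for $\omega\in\Pscr$ are precisely $|\Fw(\vc{g})|\le\sqrt{1-\epsilon}$ on the finite set $\Pscr\subseteq[0,\wmax]\subseteq\Omega_{\Tbb}$. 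Thus I would first invoke Theorem \ref{thm:G_convex} with $\eta=\sqrt{1-\epsilon}$ and $\Omega=\Pscr$ to conclude this feasible set is non-empty, closed, and convex, and then apply Theorem \ref{thm:opt_unique_convex} with these same choices. Since $\lambda>0$ makes the objective strongly convex and the indicator of a closed convex set is proper lower semi-continuous convex, the existence and uniqueness of $\gstareps$ follow, and the bound \eqref{eqn:gepsstar_Omega_bound} is the specialization of \eqref{eqn:gstar_Omega_bound}, using once more that $\zero\in\Gscr_{\kernel}(\sqrt{1-\epsilon},\Pscr)$.

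The substantive part is the feasibility guarantee \eqref{eqn:Gepsstar_le_1}, where the mesh condition $\mesh(\Pscr)\le 2\epsilon/L$ must be leveraged to upgrade feasibility on the finite grid $\Pscr$ to feasibility on the entire continuum $\Omega_{\Tbb}$. Here I would combine the Lipschitz estimate of Lemma \ref{lem:gm_Lipschitz} with the norm bound just established. Concretely, applying \eqref{eqn:gepsstar_Omega_bound} inside the definition $L_{\vc{g}}=4\mu_0\mu_1\|\vc{g}\|_{\Hk}^2$ gives $L_{\gstareps}\le 4\mu_0\mu_1\lambda^{-1}\|\vcy\|^2 = L$, so the function $m_{\gstareps}$ is globally $L$-Lipschitz on $\Omega_{\Tbb}$. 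Now take an arbitrary $\omega\in[0,\wmax]$; by definition of $\mesh(\Pscr)$ there is a grid point $\omega_i\in\Pscr$ with $|\omega-\omega_i|\le\mesh(\Pscr)/2\le\epsilon/L$. The constraint in \eqref{eqn:opt_4} forces $m_{\gstareps}(\omega_i)=|\Fwij{i}(\gstareps)|^2\le 1-\epsilon$, and the Lipschitz bound gives $m_{\gstareps}(\omega)\le m_{\gstareps}(\omega_i)+L|\omega-\omega_i|\le(1-\epsilon)+L\cdot(\epsilon/L)=1$. Hence $|\Fw(\gstareps)|\le 1$ for every $\omega\in[0,\wmax]$.

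It remains to cover the frequencies $\omega>\wmax$ in the continuous-time case (in discrete time, $\wmax=\pi$ and $[0,\wmax]=\Omega_{\Tbb}$, so nothing further is needed). For $\Tbb=\Rbb_+$ I would bound $m_{\gstareps}(\omega)=|\Fw(\gstareps)|^2$ directly using the reproducing-kernel representation of $\Fwr,\Fwi$ from Lemma \ref{lem:Fw} together with the defining property \eqref{eqn:int_kst_cos_wst_le_1} of $\wmax$. Writing $|\Fw(\gstareps)|^2$ via the inner products $\inner{\phiwr}{\gstareps}_{\Hk}$ and $\inner{\phiwi}{\gstareps}_{\Hk}$, Cauchy--Schwarz yields $m_{\gstareps}(\omega)\le\|\gstareps\|_{\Hk}^2\,\bigl(\|\phiwr\|_{\Hk}^2+\|\phiwi\|_{\Hk}^2\bigr)$, and the quantity $\|\phiwr\|_{\Hk}^2+\|\phiwi\|_{\Hk}^2$ should evaluate exactly to the double integral $\bigl|\int_{\Rbb_+}\int_{\Rbb_+}\kernel(s,t)\expe^{-\Jimage\omega(s-t)}\drm s\,\drm t\bigr|$ appearing in \eqref{eqn:int_kst_cos_wst_le_1}. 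Combining this with \eqref{eqn:gepsstar_Omega_bound} then gives $m_{\gstareps}(\omega)\le\lambda^{-1}\|\vcy\|^2\cdot\lambda\|\vcy\|^{-2}=1$ for all $\omega>\wmax$. The main obstacle I anticipate is this last computation: one must carefully verify that the norms of the Riesz representers $\phiwr,\phiwi$ aggregate precisely into that double integral, which requires expanding $\|\phiwr\|_{\Hk}^2=\inner{\phiwr}{\phiwr}_{\Hk}$ through the reproducing property and the explicit formulas \eqref{eqn:phiwr_phiwi_t_TR}, and then recombining the cosine and sine terms into the complex exponential $\expe^{-\Jimage\omega(s-t)}$. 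Everything else reduces to the already-established lemmas.
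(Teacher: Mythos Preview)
Your proposal is correct and follows essentially the same approach as the paper: identify the feasible set as $\Gscr_{\kernel}((1-\epsilon)^{1/2},\Pscr)$ and invoke Theorem~\ref{thm:opt_unique_convex} for existence, uniqueness, and the norm bound; then combine Lemma~\ref{lem:gm_Lipschitz} with \eqref{eqn:gepsstar_Omega_bound} for $\omega\in[0,\wmax]$, and use Cauchy--Schwarz together with \eqref{eqn:int_kst_cos_wst_le_1} and the computation $\|\phiwr\|_{\Hk}^2+\|\phiwi\|_{\Hk}^2=\int_{\Rbb_+}\!\int_{\Rbb_+}\kernel(s,t)\expe^{-\Jimage\omega(s-t)}\drm s\,\drm t$ for $\omega>\wmax$. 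The only cosmetic difference is that the paper handles the interval $[0,\wmax]$ by contradiction using both neighboring grid points (summing two Lipschitz inequalities to obtain $2\epsilon<L_{\gstareps}\mesh(\Pscr)$), whereas you argue directly via the nearest grid point at distance at most $\mesh(\Pscr)/2$; both yield the same mesh condition.
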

\begin{proof}
Due to the definition of set $\Gscr_{\kernel}$ in \eqref{eqn:G_rho_Omega}, we know that the feasible set of optimization problem \eqref{eqn:opt_4} is $\Gscr_{\kernel}((1-\epsilon)^{\frac{1}{2}},\Pscr)$. 
Subsequently, problem \eqref{eqn:opt_4} can be written as
\begin{equation}\label{eqn:opt_5}
\minOp_{\vc{g}\in\Gcal_{\kernel}((1-\epsilon)^{\frac{1}{2}},\Pscr)}\ \
\sumOp_{t\in\Tscr}(\Lu{i}(\vc{g})-y_i)^2 + \lambda \|\vc{g}\|_{\Hk}^2.
\end{equation}
Then, according to Theorem \ref{thm:opt_unique_convex}, the optimization problem \eqref{eqn:opt_5} as well as \eqref{eqn:opt_4}, is a convex program with unique solution, denoted by $\gstareps$, which  satisfies \eqref{eqn:gepsstar_Omega_bound}. 

Let $\omega\in[0,\wmax]$. 
If $\omega\in\Pscr$, then $|\Fw(\gstareps)|\le(1-\epsilon)^{\frac{1}{2}}\le 1$. 
If $\omega\notin\Pscr$, then there exists $i\in\{1,\ldots,n\}$, such that $\omega\in(\omega_{i-1},\omega_i)$. 
If $|\Fw(\gstareps)|>1$, then, due to Lemma \ref{lem:gm_Lipschitz} and $|\Fcal_{\omega_{i-1}}(\gstareps)|^2,|\Fcal_{\omega_i}(\gstareps)|^2\le 1-\epsilon $, we know that
\begin{equation}
\begin{split}
\epsilon&<
\Big|\ |\Fcal_{\omega_i}(\gstareps)|^2-|\Fcal_{\omega}(\gstareps)|^2\Big| 
\\&\ \  = 
|\mgstareps(\omega_i)-\mgstareps(\omega)|
\!\le\! 
L_{\gstareps}|\omega_i-\omega|,\\
\epsilon&<
\Big|\ |\Fcal_{\omega}(\gstareps)|^2-|\Fcal_{\omega_{i-1}}(\gstareps)|^2\Big| 
\\&\ \ =
|\mgstareps(\omega)-\mgstareps(\omega_{i-1})|
\!\le\! L_{\gstareps}|\omega-\omega_{i-1}|,
\end{split}
\end{equation}
where $L_{\gstareps} = 4\mu_0\mu_1\|\gstareps\|^2$.
Therefore, since $|\omega_i-\omega_{i-1}|\le \mesh(\Pscr)$, we have that $2\epsilon< L_{\gstareps}\mesh(\Pscr)$. 
Subsequently, it follows that
\begin{equation*}
\!	
\mesh(\Pscr)\!>\!
\frac{2\epsilon}{L_{\geps}} 
\!= \!
\frac{2\epsilon}{4\mu_0\mu_1\|\geps\|^2}
\!\ge\! 
\frac{2\epsilon\lambda}{4\mu_0\mu_1\!\sum_{t\in\Tscr}\!y_t^2}
\!=\!
\frac{2\epsilon}{L},
\!
\end{equation*}
which  contradicts with $\mesh(\Pscr)\le
\frac{2\epsilon}{L}$. 
Therefore, we have $|\Fw(\geps)|\le 1$. This shows that $\|\Gstareps\|_{\Hcal_\infty} = \sup_{\omega\in[0,\wmax]}|\Fw(\gstareps)|\le 1$, which concludes the proof for the case $\Tbb=\Zbb_+$ where we have $\wmax=\pi$.
Now, we consider the case $\Tbb=\Rbb_+$ and let $\omega>\wmax$. 
Due to Lemma \ref{lem:Fw} and the Cauchy-Schwartz  inequality, we know that
\begin{equation}\label{eqn:Fw_le_g_phiwr_phiwi}
\begin{split}
|\Fw(\gstareps)|^2 
&= 
|\inner{\gstareps}{\phiwr}|^2
+
|\inner{\gstareps}{\phiwi}|^2
\\&\le 
\|\gstareps\|^2\ 
\Big[\|\phiwr\|^2 + \|\phiwi\|^2\Big].
\end{split}	
\end{equation}
On other hand, from \eqref{eqn:Fwr_Fwi_inner_product}, \eqref{eqn:def_CT_Fw}, %
and
\eqref{eqn:int_kst_cos_wst_le_1}, we have 
\begin{equation}
\begin{split}
\|\phiwr&\|^2 + \|\phiwi\|^2
=
\inner{\phiwr}{\phiwr}^2
+ 
\inner{\phiwi}{\phiwi}^2
\\&=
\int_{\Rbb_+}\!\!\!\phiwrk{t}\cos(\omega t)
\drm t +
\int_{\Rbb_+}\!\!\!\phiwik{t}\sin(\omega t)
\drm t
\\&=
\int_{\Rbb_+}\!
\int_{\Rbb_+}\!
\kernel(t,s)\cos(\omega s)\cos(\omega t)
\drm s\drm t 
\\&\qquad +
\int_{\Rbb_+}\!
\int_{\Rbb_+}\!
\kernel(t,s)\sin(\omega s)\sin(\omega t)
\drm s\drm t
\\&=
\int_{\Rbb_+}\!
\int_{\Rbb_+}\!
\kernel(t,s)\expe^{-\Jimage\omega (s-t)}
\drm s\drm t 
\le \frac{\lambda}{\sum_{t\in\Tscr}y_t^2},
\end{split}
\end{equation}
where the last equality is due to the fact that  $\int_{\Rbb_+}\!
\int_{\Rbb_+}\!
\kernel(t,s)\sin(\omega (s-t))
\drm s\drm t=0$ which is a result of $\kernel(t,s)=\kernel(s,t)$ and $\sin(\omega (s-t))= -\sin(\omega (t-s))$. 
Accordingly, due to 
\eqref{eqn:Fw_le_g_phiwr_phiwi} and \eqref{eqn:gepsstar_Omega_bound}, we have  
$|\Fw(\gstareps)|^2\le 1$. This shows that 
$\|\Gstareps\|_{\Hcal_\infty} = \sup_{\omega\in\Omega_{\Tbb}}|\Fw(\gstareps)|\le 1$ and concludes the proof.
\end{proof}
Due to Theorem \ref{thm:opt_unique_convex_eps}, we know that  \eqref{eqn:opt_4} admits a unique solution. 
While this optimization problem is defined over the infinite-dimensional Hilbert space $\Hk$, we can find its unique  solution $\gstareps$ by solving an equivalent convex finite-dimensional program.
This feature which makes \eqref{eqn:opt_4} a tractable problem, is due to the structure of the RKHS $\Hk$ and the {\em Representer Theorem}, 
provided below.
\begin{theorem}[Representer Theorem, \cite{scholkopf2001generalized, dinuzzo2012representer}]\label{thm:rep_thm}
Let $e:\Rbb^m\to \Rbb\cup\{+\infty\}$ and $r:\Rbb_+\to \Rbb$ be functions such that $r$ is an increasing function. Also, let $\Hcal$ be a Hilbert space with inner product $\inner{\cdot}{\cdot}_{\Hcal}$. Consider the optimization problem 
\begin{equation}
\label{eqn:opt_representer_thm}
\min_{\vc{w}\in\Hcal} \    
e(\inner{\vc{w}_1}{\vc{w}}_{\Hcal}, \ldots,
\inner{\vc{w}_m}{\vc{w}}_{\Hcal})+ r(\|\vc{w}\|_{\Hcal}),
\end{equation}
where  $\vc{w}_1,\ldots,\vc{w}_m\in\Hcal$ are given vectors. Then, if \eqref{eqn:opt_representer_thm} admits a solution, it has also a solution in $\Wscr:=\linspan\{\vc{w}_i\}_{i=1}^m$.
\end{theorem}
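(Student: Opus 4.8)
The plan is to use the classical orthogonal-decomposition argument. The key structural observation is that $\Wscr=\linspan\{\vc{w}_i\}_{i=1}^m$ is a finite-dimensional, hence closed, subspace of $\Hcal$, so the Hilbert-space projection theorem yields the orthogonal decomposition $\Hcal=\Wscr\oplus\Wscr^{\perp}$. Every $\vc{w}\in\Hcal$ then has a unique representation $\vc{w}=\vc{w}_{\parallel}+\vc{w}_{\perp}$ with $\vc{w}_{\parallel}\in\Wscr$ and $\vc{w}_{\perp}\in\Wscr^{\perp}$.

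First I would take an arbitrary solution $\vc{w}^{\star}$ of \eqref{eqn:opt_representer_thm}, which exists by the hypothesis of the theorem, and decompose it as $\vc{w}^{\star}=\vc{w}_{\parallel}^{\star}+\vc{w}_{\perp}^{\star}$ as above. The argument then splits according to the two terms of the objective. For the data-fitting term, I would observe that for each $i$, since $\vc{w}_i\in\Wscr$ and $\vc{w}_{\perp}^{\star}\perp\Wscr$, one has $\inner{\vc{w}_i}{\vc{w}^{\star}}_{\Hcal}=\inner{\vc{w}_i}{\vc{w}_{\parallel}^{\star}}_{\Hcal}+\inner{\vc{w}_i}{\vc{w}_{\perp}^{\star}}_{\Hcal}=\inner{\vc{w}_i}{\vc{w}_{\parallel}^{\star}}_{\Hcal}$, so the entire argument vector fed into $e$ is identical for $\vc{w}^{\star}$ and $\vc{w}_{\parallel}^{\star}$, and hence $e$ takes the same value at both points. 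For the regularization term, the Pythagorean identity gives $\norm{\vc{w}^{\star}}_{\Hcal}^2=\norm{\vc{w}_{\parallel}^{\star}}_{\Hcal}^2+\norm{\vc{w}_{\perp}^{\star}}_{\Hcal}^2\ge\norm{\vc{w}_{\parallel}^{\star}}_{\Hcal}^2$, and since $r$ is increasing it follows that $r(\norm{\vc{w}_{\parallel}^{\star}}_{\Hcal})\le r(\norm{\vc{w}^{\star}}_{\Hcal})$. Combining the two estimates shows that the objective evaluated at $\vc{w}_{\parallel}^{\star}$ does not exceed its value at $\vc{w}^{\star}$; by optimality of $\vc{w}^{\star}$ the two values must coincide, so $\vc{w}_{\parallel}^{\star}\in\Wscr$ is itself a solution, which is exactly the claim.

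I do not expect a deep obstacle here, as this is the standard representer argument; the points requiring care are mostly bookkeeping. The first is justifying the orthogonal decomposition, which rests on the finite-dimensionality of $\Wscr$ guaranteeing closedness, so that the projection is well-defined. The second, and the only genuinely delicate point, is that $e$ is allowed to take the value $+\infty$: this causes no difficulty precisely because the arguments of $e$ are \emph{unchanged} under passing from $\vc{w}^{\star}$ to $\vc{w}_{\parallel}^{\star}$, so the comparison of objective values never involves subtracting infinities. The third is to note that, since $r$ is assumed only increasing rather than strictly increasing, the conclusion is that $\vc{w}_{\parallel}^{\star}$ is \emph{a} solution lying in $\Wscr$, which is all that the statement asserts; strict monotonicity would additionally force every solution to lie in $\Wscr$, but that stronger conclusion is not needed.
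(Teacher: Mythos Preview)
Your proposal is correct and is precisely the standard orthogonal-decomposition argument for the representer theorem. Note, however, that the paper does not actually supply its own proof of this statement: Theorem~\ref{thm:rep_thm} is stated as a known result with citations to \cite{scholkopf2001generalized,dinuzzo2012representer} and is then invoked as a tool in the proof of Theorem~\ref{thm:opt_unique_convex_eps_finite}. Your argument matches the classical proof found in those references, so there is nothing to compare against within the paper itself.
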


In order to present the tractable finite-dimensional optimization problem equivalent to \eqref{eqn:opt_4}, additional definitions are required. Define  $m :=\nD+2\nP+2$ and the index sets
$\Iu := \{0,1,\ldots,\nD-1\}$,
$\IP := \{0,1,\ldots,\nP\}$, 
and $\Iall := \{0,1,\ldots,m-1\}$.
Let 
$\{\varphi_i\}_{i=0}^{m-1}$ 
be vectors defined as
\begin{equation}
\begin{cases}
\varphi_i:=\phiu{t_i}, 
& \text{ for }
i\in\Iu,\\
\varphi_{\nD+2j}:=\phir{\omega_j}, 
& \text{ for }
j\in\IP,\\
\varphi_{\nD+2j+1}:=\phii{\omega_j}, 
& \text{ for }
j\in\IP.
\end{cases}
\end{equation}
Let $\Phi\in\Rbb^{m\times m}$ be a symmetric matrix such that its entry at 
the $i^{\text{\tiny{th}}}$ row and 
the $j^{\text{\tiny{th}}}$ column is $\inner{\varphi_{i-1}}{\varphi_{j-1}}_{\Hk}$, for $i,j=1,\ldots,m$.
One can see that $\Phi$ is the Gram matrix of vectors $\varphi_0,\ldots,\varphi_{m-1}$.
\iftrue
Moreover, for $i\in\Iu$, we define vector $\vca_i\in\Rbb^m$ as
the $(i+1)^{\text{\tiny{th}}}$ column of $\Phi$.
Similarly, vectors $\vc{b}_j\in\Rbb^m$ and $\vc{c}_j\in\Rbb^m$ are  defined respectively as the 
$(\nD+2j+1)^{\text{\tiny{th}}}$ and the 
$(\nD+2j+2)^{\text{\tiny{th}}}$ column of $\Phi$, for $j\in\IP$.
\else 
Moreover, for $i\in\Iu$, we define vector $\vca_i$ as
\begin{equation}\label{eqn:a_i}
\vc{a}_i:=
\Big[\inner{\varphi_{k}}{\varphi_{i}}_{\Hk}\Big]_{k=0}^{m-1}\in\Rbb^m,
\end{equation}
which is the $(i+1)^{\text{\tiny{th}}}$ column of $\Phi$.
Similarly, vectors $\vc{b}_j\in\Rbb^m$ and $\vc{c}_j\in\Rbb^m$ are  defined respectively as the 
$(\nD+2j+1)^{\text{\tiny{th}}}$ and the 
$(\nD+2j+2)^{\text{\tiny{th}}}$ column of $\Phi$, for $j\in\IP$,
i.e., 
we have
\begin{equation}\label{eqn:b_j}
\vcb_j :=
\Big[\inner{\varphi_{i}}{\varphi_{\nD+2j}}
\Big]_{i=0}^{m-1}\in\Rbb^m,
\end{equation} 
and 
\begin{equation}\label{eqn:c_j}
\vcc_j :=
\Big[
\inner{\varphi_{i}}{\varphi_{\nD+2j+1}}
\Big]_{i=0}^{m-1}\in\Rbb^m.
\end{equation}
\fi 
In the followings, without loss of generality, we assume that $\varphi_0,\ldots,\varphi_{m-1}$ are linearly independent. Indeed, if for some $i\in\Iall$, the vector  $\varphi_i$ belongs to $\linspan\{\varphi_{j}|j\in\Iall
\backslash\{i\}\}$, it does not have any additional information and one can replace it with a linear combination of   $\{\varphi_{j}|j\in\Iall
\backslash\{i\}\}$. 

Based on the introduced notations, we can present the theorem on the equivalent tractable finite-dimensional program. 
\begin{theorem}\label{thm:opt_unique_convex_eps_finite}
The unique solution of optimization problem \eqref{eqn:opt_4}, $\gstareps$, is in the following linear form
\begin{equation}
\label{eqn:gstareps_lin_xphi}
\gstareps=\sum_{i=0}^{m-1}x_i\varphi_i,	
\end{equation}
where $\vc{x}:=
\begin{bmatrix}x_0&x_1&\ldots&x_{m-1}\end{bmatrix}^\tr\in\Rbb^m$ is the unique solution of the following convex program
\begin{equation}\label{eqn:opt_6_finite}
\begin{array}{cl}
\minOp_{\vc{x}\in\Rbb^m}
&
\sum_{i\in\Iu}(\vc{a}_i^\tr\vc{x}-y_i)^2 + \lambda \vc{x}^\tr\Phi\vc{x}
\\
\mathrm{s.t.}
& 
(\vc{b}_j^\tr\vc{x})^2
+
(\vc{c}_j^\tr\vc{x})^2
\le 1-\epsilon, \quad  \forall j\in\IP.
\end{array}
\end{equation}
\end{theorem}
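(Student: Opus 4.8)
The plan is to reduce the infinite-dimensional problem \eqref{eqn:opt_4} to a finite-dimensional one via the Representer Theorem (Theorem \ref{thm:rep_thm}), and then to translate the resulting restricted problem into the explicit coordinate form \eqref{eqn:opt_6_finite}. First I would cast \eqref{eqn:opt_4} into the unconstrained format of Theorem \ref{thm:rep_thm} by absorbing the constraints into the objective through the indicator function $\delta$. By Lemma \ref{lem:Lui_bounded}, each fitting term is $\Lu{t_i}(\vc{g}) = \inner{\varphi_i}{\vc{g}}_{\Hk}$ for $i\in\Iu$, and by Lemma \ref{lem:Fw}, $\Fwrj{j}(\vc{g}) = \inner{\varphi_{\nD+2j}}{\vc{g}}_{\Hk}$ and $\Fwij{j}(\vc{g}) = \inner{\varphi_{\nD+2j+1}}{\vc{g}}_{\Hk}$, whence $|\Fcal_{\omega_j}(\vc{g})|^2 = \inner{\varphi_{\nD+2j}}{\vc{g}}_{\Hk}^2 + \inner{\varphi_{\nD+2j+1}}{\vc{g}}_{\Hk}^2$. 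Thus the entire objective depends on $\vc{g}$ only through the $m$ inner products $\{\inner{\varphi_i}{\vc{g}}_{\Hk}\}_{i=0}^{m-1}$ together with $\lambda\|\vc{g}\|_{\Hk}^2$, matching the form $e(\inner{\varphi_0}{\vc{g}}_{\Hk},\ldots,\inner{\varphi_{m-1}}{\vc{g}}_{\Hk}) + r(\|\vc{g}\|_{\Hk})$ with $r(s)=\lambda s^2$ increasing and $e$ collecting the least-squares loss and the QCQP constraints (the latter via $\delta$).

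Next I would note that Theorem \ref{thm:opt_unique_convex_eps} already guarantees that \eqref{eqn:opt_4} admits a unique solution $\gstareps$. Since a solution exists, Theorem \ref{thm:rep_thm} ensures that one solution lies in $\Wscr := \linspan\{\varphi_i\}_{i=0}^{m-1}$; combined with uniqueness, $\gstareps$ itself must lie in $\Wscr$, yielding the linear form \eqref{eqn:gstareps_lin_xphi}.

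Having the representation $\gstareps = \sum_{k=0}^{m-1} x_k\varphi_k$, I would substitute it back and use bilinearity of $\inner{\cdot}{\cdot}_{\Hk}$. The fitting term becomes $\Lu{t_i}(\gstareps) = \inner{\gstareps}{\varphi_i}_{\Hk} = \sum_k x_k\inner{\varphi_k}{\varphi_i}_{\Hk} = \vca_i^\tr\vc{x}$ by the definition of $\vca_i$ as the corresponding column of the Gram matrix $\Phi$; the regularizer becomes $\|\gstareps\|_{\Hk}^2 = \sum_{k,l}x_k x_l\inner{\varphi_k}{\varphi_l}_{\Hk} = \vc{x}^\tr\Phi\vc{x}$; and each frequency constraint becomes $(\vcb_j^\tr\vc{x})^2 + (\vcc_j^\tr\vc{x})^2 \le 1-\epsilon$ by the same identities applied to the columns $\vcb_j,\vcc_j$. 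This reproduces \eqref{eqn:opt_6_finite} exactly.

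For uniqueness of $\vc{x}$, I would invoke the standing assumption that $\varphi_0,\ldots,\varphi_{m-1}$ are linearly independent, so that the coordinate map $\vc{x}\mapsto\sum_k x_k\varphi_k$ is a linear bijection onto $\Wscr$ and the unique $\gstareps\in\Wscr$ corresponds to a unique $\vc{x}$; equivalently, linear independence makes $\Phi$ positive definite, so $\vc{x}^\tr\Phi\vc{x}$ is strictly convex and, with $\lambda>0$, the finite-dimensional objective is strongly convex over the convex feasible set. I expect the only delicate step to be the first one: verifying that absorbing the constraints via $\delta$ really leaves $e$ a function of the $m$ inner products alone, so that Theorem \ref{thm:rep_thm} applies. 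Once this structural point is secured, the remaining substitutions are routine bookkeeping with the Gram matrix $\Phi$.
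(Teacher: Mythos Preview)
Your proposal is correct and follows essentially the same route as the paper: absorb the finitely many frequency constraints into the objective via indicator functions so that the problem has the form $e(\inner{\varphi_0}{\vc{g}}_{\Hk},\ldots,\inner{\varphi_{m-1}}{\vc{g}}_{\Hk})+r(\|\vc{g}\|_{\Hk})$, invoke Theorem~\ref{thm:opt_unique_convex_eps} for existence/uniqueness, apply the Representer Theorem to force $\gstareps\in\linspan\{\varphi_i\}_{i=0}^{m-1}$, and then substitute to obtain \eqref{eqn:opt_6_finite} with uniqueness coming from linear independence of the $\varphi_i$ (equivalently, positive definiteness of $\Phi$). The paper carries out exactly these steps, including the explicit definition of the function $e$ on $\Rbb^m$ that you anticipate as the delicate point.
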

\begin{proof}
Define $\Ecal:\Hk\to \Rbb\cup\{+\infty\}$ as
\begin{equation}\label{eqn:E}
\Ecal(\vc{g}) := \sum_{t\in\Tscr}(\Lu{t}(\vcg)-y_t)^2 
+ 
\sum_{\omega\in\Pscr}\delta_{\{|\Fw|^2\le 1-\epsilon\}}(\vcg).
\end{equation}
Since $\Pscr=\{\omega_0,\ldots,\omega_{\nP}\}$ is a finite set, the summation in \eqref{eqn:E} is well-defined.
We know that optimization problem \eqref{eqn:opt_4}
is equivalent to $\min_{\vc{g}\in\Hk} \Ecal(\vc{g})+\lambda \Rcal(\vc{g})$,
where $\Rcal:\Hk\to\Rbb$ is defined as $\Rcal(\vc{g})=\|\vc{g}\|^2$ (see Section \ref{ssec:general_opt}). 
Let function 
$e:\Rbb^m\to \Rbb\cup\{+\infty\}$ be defined such that for any $\vc{z}=(z_i)_{i=0}^{m-1}\in\Rbb^m$ we have
\begin{equation}\label{eqn:e}
e(\vc{z}) := \sum_{i\in\Iu}(z_i-y_{t_i})^2 + \sum_{j\in\IP}\delta_{\Ascr_j}(\vc{z}),
\end{equation}
where, for each $j\in\IP$, $\Ascr_j\subseteq \Rbb^m$ is the following set 
\begin{equation}
\Ascr_j:=
\bigg\{
(z_i)_{i=0}^{m-1}\in\Rbb^n
\bigg|
z_{\nD+2j}^2+z_{\nD+2j+1}^2\le 1-\epsilon
\bigg\},	
\end{equation}
Also, let $r:\Rbb_+\to \Rbb$ be a function defined as $r(z)=\lambda z^2$, which is an increasing function.
For $i\in\Iu$ and
$j\in\IP$, we know that $\Lu{t_i}(\vc{g}) = \inner{\varphi_i}{\vc{g}}$,  $\Fwrj{j}(\vc{g})=\inner{\varphi_{\nD+2j}}{\vc{g}}$ 
and
$\Fwrj{j}(\vc{g})=\inner{\varphi_{\nD+2j+1}}{\vc{g}}$.
Accordingly, we have 
\begin{equation*}
\Ecal(\vc{g}) + \lambda \Rcal(\vc{g}) =
e(\inner{\varphi_0}{\vc{g}},\ldots,\inner{\varphi_{m-1}}{\vc{g}}) + 
r(\|\vc{g}\|). 
\end{equation*}
From Theorem \ref{thm:opt_unique_convex_eps}, it follows that 
$\min_{\vc{g}\in\Hk} \Ecal(\vc{g})+\lambda \Rcal(\vc{g})$ has a unique solution denoted by $\gstareps$.
Therefore, due to Theorem \ref{thm:rep_thm}, it has a solution which belongs to $\Wscr 
= \ \linspan\{\varphi_i\}_{i=0}^{m-1}$.
From the uniqueness of the solution of \eqref{eqn:opt_4}, it follows that $\gstareps$ belongs to $\Wscr$.
Therefore, in order to find $\gstareps$,  we need to obtain the corresponding coefficients in linear representation \eqref{eqn:gstareps_lin_xphi}.
Accordingly, we replace $\vcg$ in $\min_{\vc{g}\in\Hk} \Ecal(\vc{g})+\lambda \Rcal(\vc{g})$, or equivalently in \eqref{eqn:opt_4}, by $\vc{g}=\sum_{i=0}^{m-1}x_i\varphi_i$, and solve the problem for $\vcx:=[x_i]_{i=0}^{m-1}\in\Rbb^m$.
Due to the linearity of inner product and the definition of matrix $\Phi$, 
we have 
\begin{equation}
\begin{split}
\Rcal(\vc{g}) =
\inner{\vc{g}}{\vc{g}} 
= 
\sum_{i=0}^{m-1}\sum_{j=0}^{m-1}x_ix_j
\inner{\varphi_i}{\varphi_j}=
\vc{x}^\tr\Phi\vc{x}.
\end{split}
\end{equation}
Also, for each $i\in\Iu$, due to the definition of $\vca_i$, one has
\begin{equation}
\Lu{t_i}(\vc{g}) 
\!=\! \langle{\varphi_{i}},{\sum_{k=0}^{m-1}\!x_k\varphi_k}\rangle 
\!=\!
\sum_{k=0}^{m-1}\!
x_k\inner{\varphi_i}{\varphi_k} 
\!=\!
\vc{a}_i^\tr \vc{x}.
\end{equation}
Similarly,  from the definition of vectors $\vc{b}_j$ and $\vc{c}_j$, one has
$ \Fwrj{j}(\vc{g})=\vc{b}_j^\tr \vc{x}$ and $\Fwij{j}(\vc{g}) =\vc{c}_j^\tr \vc{x}$,
for each $j\in\IP$.
Therefore, from \eqref{eqn:E}, it follows that
\begin{equation*}
\Ecal(\vc{g}) = 
\Ecal(\sum_{k=0}^{m-1}x_k\varphi_k)
= \sum_{i=0}^{\nD-1}(\vc{a}_i^\tr\vc{x}-y_i)^2
+ 
\sum_{j=0}^{\nP}\delta_{\Bscr_j}(\vc{x}),
\end{equation*}
where, for each $j\in\IP$, $\Bscr_j\subseteq \Rbb^m$ is the set defined as 
\begin{equation*}\!\!
\Bscr_j:=
\bigg\{\vc{x}
\!=\!
(x_i)_{i=0}^{m-1}\in\Rbb^m
\ \!\bigg|\ \!
(\vc{b}_j^\tr\vc{x})^2+(\vc{c}_j^\tr\vc{x})^2\le 1-\epsilon
\bigg\}.	
\end{equation*}
Accordingly, solving
$\min_{\vc{g}\in\Hk} \Ecal(\vc{g})+\lambda \Rcal(\vc{g})$ 
reduces to
\begin{equation}
\min_{\vc{x}\in\Rbb^m} \sum_{i=0}^{\nD-1}(\vc{a}_i^\tr\vc{x}-y_i)^2
+ 
\sum_{j=0}^{\nP}\delta_{\Bscr_j}(\vc{x}) 
+
\lambda 
\vc{x}^\tr\Phi\vc{x},
\end{equation}
which is equivalent to \eqref{eqn:opt_6_finite}.
Since $\vc{x}^\tr\Phi\vc{x}=\|\sum_{i=0}^{m-1}x_i\varphi_i\|^2$ and $\{\varphi_i\}_{i=0}^{m-1}$ are linearly independent, $\Phi$ is a positive definite matrix. Therefore, the cost function in 
\eqref{eqn:opt_6_finite}
is strongly convex.
Also, for each $j=0,\ldots, \nP$, we have
\begin{equation}
(\vc{b}_j^\tr\vc{x})^2+(\vc{c}_j^\tr\vc{x})^2
=
\vc{x}^\tr(\vc{b}_j\vc{b}_j^\tr+\vc{c}_j\vc{c}_j^\tr)\vc{x},
\end{equation}
and $\vc{b}_j\vc{b}_j^\tr+\vc{c}_j\vc{c}_j^\tr$ is a positive semi-definite matrix. 
Consequently, the feasible set in 
\eqref{eqn:opt_6_finite}
is a convex and closed set. 
Moreover, since $\epsilon<1$, we know that $\vc{g} = \zero$ is a feasible point of
\eqref{eqn:opt_6_finite}.
Therefore, the optimization problem 
\eqref{eqn:opt_6_finite}
is a convex program with a unique solution.
This concludes the proof.
\end{proof}
Before proceeding further, we present a corollary which is useful in the implementation of the proposed approach.
Let $\Pscrbar:=\{\omegabar_i\}_{i=0}^{\nPbar}\subseteq\Pscr$, $\IPbar:=\{0,1,\ldots,\nPbar\}$ and $\mbar:=\nD+2\nPbar+2$.
Accordingly, similar to $\Phi$ and $\{\varphi_i\}_{i=0}^{m-1}$, we define $\Phibar$ and $\{\phibar_i\}_{i=0}^{\mbar-1}$.
Also, for $i\in\Iu$ and $j\in\IPbar$, let $\vcabar_i$, $\vcbbar_j$ and  $\vccbar_j$ be defined similar to 
$\vca_i$, $\vcb_j$ and  $\vcc_j$, respectively.

\begin{corollary}\label{cor:Pbar}
Consider the following optimization problem
\begin{equation}\label{eqn:opt_4_bar}
\begin{array}{cl}
\minOp_{\vc{g}\in\Hk}
&
\sumOp_{t\in\Tscr}(\Lu{t}(\vc{g})-y_t)^2 + \lambda\ \! \|\vc{g}\|_{\Hk}^2,
\\
\mathrm{s.t.}
& 
|\Fw(\vc{g})|^2\le 1-\epsilon, \quad  \forall \omega\in\Pscrbar.
\end{array}
\end{equation}
Then, \eqref{eqn:opt_4_bar} has a unique solution denoted by $\gstarepsbar$.
This solution satisfies \eqref{eqn:gepsstar_Omega_bound}, and admits
a parametric form as $\gstarepsbar=\sum_{i=0}^{\mbar-1}x_i\phibar_i$
where $\vcx=[x_i]_{i=0}^{\mbar-1}$ is the solution of the following convex program
\begin{equation}\label{eqn:opt_6_finite_bar}
\begin{array}{cl}
\minOp_{\vc{x}\in\Rbb^{\mbar}}
&
\sum_{i\in\Iu}(\vcabar_i^\tr\vc{x}-y_i)^2 + \lambda \vc{x}^\tr\Phibar\vc{x}
\\
\mathrm{s.t.}
& 
(\vcbbar_j^\tr\vc{x})^2
+
(\vccbar_j^\tr\vc{x})^2
\le 1-\epsilon, \quad  \forall j\in\IPbar.
\end{array}
\end{equation}
Moreover, if $|\Fw(\gstarepsbar)|^2\le 1-\epsilon$, for all $\omega\in\Pscr$, then 
$\gstarepsbar$ coincides with $\gstareps$. 
\end{corollary}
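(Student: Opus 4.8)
The plan is to obtain the first three claims as immediate specializations of results already established for problem \eqref{eqn:opt_4}, and then to settle the coincidence statement by a short nested-feasibility argument.

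First I would observe that $\Pscrbar\subseteq\Pscr\subseteq[0,\wmax]\subseteq\Omega_{\Tbb}$, so that the feasible set of \eqref{eqn:opt_4_bar} is precisely $\Gscr_{\kernel}\big((1-\epsilon)^{\frac12},\Pscrbar\big)$. Applying Theorem \ref{thm:opt_unique_convex} with $\Omega=\Pscrbar$ and $\eta=(1-\epsilon)^{\frac12}$ then yields at once that \eqref{eqn:opt_4_bar} is a convex program with a unique solution $\gstarepsbar$ obeying the bound \eqref{eqn:gepsstar_Omega_bound}. For the parametric form I would repeat verbatim the reasoning of Theorem \ref{thm:opt_unique_convex_eps_finite}, using the vectors $\{\phibar_i\}_{i=0}^{\mbar-1}$, the Gram matrix $\Phibar$, and the index set $\IPbar$ in place of their unbarred counterparts: the Representer Theorem (Theorem \ref{thm:rep_thm}) forces the unique minimizer into $\linspan\{\phibar_i\}_{i=0}^{\mbar-1}$, and substituting $\vcg=\sum_{i=0}^{\mbar-1}x_i\phibar_i$ reduces the problem to the finite-dimensional QCQP \eqref{eqn:opt_6_finite_bar}, whose cost is strongly convex and whose feasible set is convex, closed, and nonempty exactly as in that proof.

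The only genuinely new content is the last claim, which I would prove by a relaxation argument. The key structural fact is that \eqref{eqn:opt_4} and \eqref{eqn:opt_4_bar} share the same objective functional; and because $\Pscrbar\subseteq\Pscr$, requiring the constraint at every $\omega\in\Pscr$ is at least as restrictive as requiring it only on $\Pscrbar$, so the feasible sets are nested:
\begin{equation*}
\Gscr_{\kernel}\big((1-\epsilon)^{\frac12},\Pscr\big)\ \subseteq\ \Gscr_{\kernel}\big((1-\epsilon)^{\frac12},\Pscrbar\big).
\end{equation*}
Consequently the optimal value of \eqref{eqn:opt_4} is no smaller than the optimal value of \eqref{eqn:opt_4_bar}. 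Now I would invoke the hypothesis: if $|\Fw(\gstarepsbar)|^2\le 1-\epsilon$ for every $\omega\in\Pscr$, then $\gstarepsbar$ is feasible for \eqref{eqn:opt_4}; since it already attains the optimal value of the less-constrained problem \eqref{eqn:opt_4_bar}, which lower-bounds that of \eqref{eqn:opt_4}, it must be a minimizer of \eqref{eqn:opt_4} as well. Appealing to the uniqueness of the solution of \eqref{eqn:opt_4} granted by Theorem \ref{thm:opt_unique_convex_eps}, I conclude $\gstarepsbar=\gstareps$.

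I do not anticipate a serious obstacle here, as there is no new estimate to derive; the coincidence is just the standard observation that a relaxation whose optimizer happens to be feasible for the original problem in fact solves the original problem. The two points requiring care are to confirm that the two programs have identical objectives (so that their optimal values are directly comparable) and to cite the uniqueness furnished by Theorem \ref{thm:opt_unique_convex_eps} rather than re-establishing it.
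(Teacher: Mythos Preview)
Your proposal is correct and follows essentially the same approach as the paper: the first three claims are obtained by re-running the arguments of Theorems \ref{thm:opt_unique_convex} and \ref{thm:opt_unique_convex_eps_finite} with $\Pscrbar$ in place of $\Pscr$, and the coincidence statement is proved via the same nested-feasibility/relaxation argument together with uniqueness from Theorem \ref{thm:opt_unique_convex_eps}.
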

\begin{proof}
Using similar lines of arguments for the proof of Theorem \ref{thm:opt_unique_convex_eps}, one can show the existence and uniqueness of the solution of \eqref{eqn:opt_4_bar}. 
The parametric form of $\gstarepsbar$ and the fact that $\vcx$ is the solution of \eqref{eqn:opt_6_finite_bar} can be concluded based on a proof similar to the proof of Theorem \ref{thm:opt_unique_convex_eps_finite}.

Since, for each $\omega\in\Pscr$, one has $|\Fw(\gstarepsbar)|^2\le 1-\epsilon$, we know that $\gstarepsbar$ is feasible for optimization \eqref{eqn:opt_4}. Therefore, being $\gstareps$ the optimal solution of \eqref{eqn:opt_4}, it follows that  
$ 
		\Lcal_{\Dscr}(\gstareps) + \lambda\Rcal(\gstareps)
		\le 
		\Lcal_{\Dscr}(\gstarepsbar) + \lambda\Rcal(\gstarepsbar),
$ 
where $\Lcal_{\Dscr}$ is defined in \eqref{eqn:LD}. Also, we know that the feasible set in \eqref{eqn:opt_4} is a subset of feasible set of \eqref{eqn:opt_4_bar}. Accordingly, since $\gstarepsbar$ is the optimal solution for \eqref{eqn:opt_4_bar}, we have
$ 
		\Lcal_{\Dscr}(\gstarepsbar) + \lambda\Rcal(\gstarepsbar)
		\le 
		\Lcal_{\Dscr}(\gstareps) + \lambda\Rcal(\gstareps).
$ 
Therefore, $\gstarepsbar$ is an optimizer of \eqref{eqn:opt_4}. Consequently, from the uniqueness of the solution of \eqref{eqn:opt_4}, we have  $\gstarepsbar=\gstareps$, and concludes the proof.
\end{proof} 
Theorem \ref{thm:opt_unique_convex_eps_finite} introduces finite-dimensional convex program  \eqref{eqn:opt_6_finite} as a tractable problem equivalent to optimization \eqref{eqn:opt_4}.
Accordingly, in order to solve \eqref{eqn:opt_4} it is enough to solve \eqref{eqn:opt_6_finite} which is more discussed in the next section. 
Nevertheless, first we should verify that solving optimization problem \eqref{eqn:opt_4}, for small enough $\epsilon>0$, provides a close approximation to the solution of the main optimization problem \eqref{eqn:opt_2}.
This is addressed by the next theorem.

\begin{theorem}\label{thm:tightness}
Let 
$\Lu{}:\Hk\to\Rbb^{\nD}$ be a linear operator defined as
$\Lu{}(\vc{g}):=[\Lu{t}(\vc{g})]_{t\in\Tscr}$, for any $\vc{g}\in\Hk$.
Then, we have
\begin{equation}
\|\gstareps-\gstar\|_{\Hk}^2
\le 
4\epsilon(\|\Lu{}\|^2+1)^{\frac{1}{2}}
\Big(\sum_{t\in\Tscr}y_t^2\Big).
\end{equation}
Moreover, we have $\limOp_{\epsilon \to \zero}\gstareps = \gstar$ in $\Hk$.
Furthermore, if $\sup_{t\in\Tbb}
\kernel(t,t)^{\frac12}\!<\!\infty$, one has $\gstarepst{t}\tolimOp^{\epsilon\to 0}\gstart{t}$, uniformly in $\Tbb$. 
\end{theorem}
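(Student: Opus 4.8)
The plan is to use that $\gstar$ and $\gstareps$ minimise the \emph{same} strongly convex objective $J(\vc g):=\Ecal_{\Dscr}(\vc g)+\lambda\|\vc g\|_{\Hk}^2$ over two different closed convex sets: $\gstar$ over the feasible set $\Gscr_{\kernel}(1,\Omega_{\Tbb})$ of \eqref{eqn:opt_2} (recall $\rho=1$), and $\gstareps$ over the feasible set $\Gscr_{\kernel}((1-\epsilon)^{\frac12},\Pscr)$ of \eqref{eqn:opt_4}. As established in the proof of Theorem~\ref{thm:opt_unique_convex}, $J$ is $2\lambda$-strongly convex, and under the mesh hypothesis of Theorem~\ref{thm:opt_unique_convex_eps} the solution $\gstareps$ is feasible for the untightened problem, i.e.\ $\gstareps\in\Gscr_{\kernel}(1,\Omega_{\Tbb})$. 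I would record the quadratic-growth inequality at each minimiser: for a $2\lambda$-strongly convex function with minimiser $\hat{\vc g}$ over a convex set, the first-order optimality condition gives $\lambda\|\vc h-\hat{\vc g}\|_{\Hk}^2\le J(\vc h)-J(\hat{\vc g})$ for every feasible test point $\vc h$.

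The key construction is the scaled response $c\gstar$ with $c:=(1-\epsilon)^{\frac12}$. Since $|\Fw(c\gstar)|=c\,|\Fw(\gstar)|\le c$ for every $\omega\in\Omega_{\Tbb}$, we have $|\Fw(c\gstar)|^2\le 1-\epsilon$ on all of $\Omega_{\Tbb}$ and in particular on $\Pscr$, so $c\gstar$ is feasible for \eqref{eqn:opt_4}. Applying the growth inequality at $\gstareps$ with test point $c\gstar$, and at $\gstar$ with test point $\gstareps$ (admissible by the feasibility just noted), and adding the two, the intermediate $J(\gstareps)$ terms cancel and I obtain
\begin{equation*}
\lambda\|\gstareps-\gstar\|_{\Hk}^2\ \le\ J(c\gstar)-J(\gstar).
\end{equation*}
It then only remains to bound the objective gap caused by the scaling.

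To estimate $J(c\gstar)-J(\gstar)$ I would expand both quadratics. Writing $\vc z:=\Lu{}(\gstar)$, a direct computation gives $J(c\gstar)-J(\gstar)=2(1-c)\inner{\vc z}{\vcy}-\epsilon\big(\|\vc z\|^2+\lambda\|\gstar\|_{\Hk}^2\big)$; discarding the non-positive curvature terms leaves $2(1-c)\inner{\vc z}{\vcy}$. Using $1-c\le\epsilon$, the Cauchy--Schwarz inequality, the bound $\|\vc z\|=\|\Lu{}\gstar\|\le(\|\Lu{}\|_{\Lcal(\Hk,\Rbb^{\nD})}^2+1)^{\frac12}\|\gstar\|_{\Hk}$ (obtained by passing through the augmented operator $\vc g\mapsto(\Lu{}\vc g,\vc g)$), and the a~priori estimate $\|\gstar\|_{\Hk}\le\lambda^{-\frac12}\|\vcy\|$ from \eqref{eqn:gstar_Omega_bound} (valid for $\gstar$ by Corollary~\ref{cor:opt_2_unique_convex}), the right-hand side collapses to an estimate of the stated form, with the factor $(\|\Lu{}\|_{\Lcal(\Hk,\Rbb^{\nD})}^2+1)^{\frac12}$ and the scaling $\epsilon\|\vcy\|^2$ exhibited explicitly. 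The only genuinely delicate point is the feasibility $\gstareps\in\Gscr_{\kernel}(1,\Omega_{\Tbb})$ needed for the second growth inequality; this is exactly where the mesh bound of Theorem~\ref{thm:opt_unique_convex_eps} enters, and everything else is routine expansion controlled by \eqref{eqn:gstar_Omega_bound}.

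The convergence $\limOp_{\epsilon\to0}\gstareps=\gstar$ in $\Hk$ is then immediate, since the derived bound tends to $0$ with $\epsilon$. For the uniform convergence of the samples I would invoke the reproducing property of Theorem~\ref{thm:RKHS_mercer}: for each $t\in\Tbb$, $\gstarepst{t}-\gstart{t}=\inner{\gstareps-\gstar}{\kernel_t}_{\Hk}$, so Cauchy--Schwarz together with $\|\kernel_t\|_{\Hk}=\kernel(t,t)^{\frac12}$ gives $|\gstarepst{t}-\gstart{t}|\le\|\gstareps-\gstar\|_{\Hk}\,\kernel(t,t)^{\frac12}$. Taking the supremum over $t$ and using $\sup_{t\in\Tbb}\kernel(t,t)^{\frac12}<\infty$ bounds $\sup_{t\in\Tbb}|\gstarepst{t}-\gstart{t}|$ by a fixed multiple of $\|\gstareps-\gstar\|_{\Hk}$, which vanishes as $\epsilon\to0$; this is precisely uniform convergence on $\Tbb$.
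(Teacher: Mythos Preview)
Your argument is correct and yields a bound of the stated form, but it takes a genuinely different route from the paper. The paper lifts the problem to the product Hilbert space $\Vk=\Rbb^{\nD}\times\Hk$, equipped with $\|(\vcx,\vcg)\|_{\Vk}^2=\|\vcx\|^2+\lambda\|\vcg\|_{\Hk}^2$, and rewrites each constrained regularised least-squares problem as a minimum-norm problem over a closed convex set $\Uscr\subset\Vk$ cut out by the affine constraint $\Lu{}\vcg-\vcx=\vcy$ together with the frequency bounds. It then controls $\|\vstareps-\vstar\|_{\Vk}$ by a parallelogram-law argument in terms of a one-sided set distance, which it bounds via the scaled element $(1-\epsilon)\vcg$. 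You bypass this lift entirely: working directly in $\Hk$, you use the $2\lambda$-strong-convexity growth inequality at each minimiser, with the cross-feasibility $c\gstar\in\Gscr_{\kernel}((1-\epsilon)^{1/2},\Pscr)$ and $\gstareps\in\Gscr_{\kernel}(1,\Omega_{\Tbb})$ furnishing the two test points, and then bound $J(c\gstar)-J(\gstar)$ by elementary expansion. Both proofs ultimately hinge on the mesh condition of Theorem~\ref{thm:opt_unique_convex_eps} to place $\gstareps$ in the untightened feasible set --- you flag this explicitly, whereas in the paper it is hidden in the claimed inclusion $\Uscr_{\Pscr}\subseteq\Uscr_0$ (which is not a true set inclusion; only the optimiser lies in $\Uscr_0$). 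Your route is shorter and needs no auxiliary space; the paper's product-space view has the conceptual advantage of turning everything into projections and cleanly separating the data-fitting affine constraint from the frequency constraints. Your final constant carries an explicit $\lambda^{-3/2}$ that the paper's stated bound does not display; this is cosmetic for the convergence claims. The uniform-convergence step via the reproducing property and Cauchy--Schwarz is identical to the paper's.
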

\begin{proof}
	See Appendix \ref{sec:appendix_proof_tightness}.
\end{proof}

\begin{remark}\normalfont
The property $\sup_{t\in\Tbb}
\kernel(t,t)^{\frac12}<\infty$ is satisfied 
by the TC kernel and other common kernels in the literature, such as the
diagonally/correlated (DC) kernel and the stable spline (SS) kernel \cite{pillonetto2014kernel}.
\end{remark} 

\begin{remark}\normalfont
In the case of incorrect side-information, we have $\gS\notin\Gscr_{\kernel}(1,\Omega_{\Tbb})$. 
Let $\gSproj\in\Hk$ be the projection of $\gS$ on $\Gscr_{\kernel}(1,\Omega_{\Tbb})$, i.e.,
$\gSproj$ is defined as 
\begin{equation}
\gSproj := \argmin_{\vcg\in\Gscr_{\kernel}(1,\Omega_{\Tbb})} \|\vcg-\gS\|_{\Hk},    
\end{equation}
which exists uniquely (due to Theorem~\ref{thm:G_convex} and \cite{peypouquet2015convex}). 
Due to the definition of $\gSproj$ and since $\gstar,\gstareps\in\Gscr_{\kernel}(1,\Omega_{\Tbb})$ (see Theorem~\ref{thm:opt_unique_convex_eps}), we have
\begin{equation}
0<\|\gS-\gSproj\|_{\Hk}\le\|\gS-\gstar\|_{\Hk}, 
\end{equation}
and
\begin{equation}
0<\|\gS-\gSproj\|_{\Hk}\le\|\gS-\gstareps\|_{\Hk}. 
\end{equation}
In other words, we have a systematic bias in the estimated impulse response $\gstar$, and also, in its approximation $\gstareps$, for all $\epsilon>0$. 
Accordingly, in this situation, not including the side-information may result in a more accurate identified model.
\end{remark}

\section{Optimization Algorithm}

Due to Theorem \ref{thm:opt_unique_convex_eps_finite}, the problem to be solved is
\begin{equation}\label{eqn:opt_7_finite}
\begin{array}{cl}
\minOp_{\vc{x}\in\Rbb^m}
&
\|\mxA\vcx-\vcy\|^2 + \lambda \vc{x}^\tr\Phi\vc{x}
\\
\mathrm{s.t.}
& 
\vcx^\tr
(\vcb_j\vcb_j^\tr+\vcc_j\vcc_j^\tr)\vcx
\le 1-\epsilon, \quad  \forall j\in\IP,
\end{array}
\end{equation}
where $\mxA:=[\vca_0,\ldots,\vca_{\nD-1}]^\tr 
$ and
$\vcy:=[y_t]_{t\in\Tscr}
$.
From the definition of matrix $\mxA$, we know that $\mxA$ is the first $\nD$ rows of $\Phi$. Also, $\vcb_j$ and $\vcc_j$ are respectively 
the $(\nD+2j+1)^{\text{\tiny{th}}}$ and 
the $(\nD+2j+2)^{\text{\tiny{th}}}$ column of $\Phi$, for $j\in\IP$.
Therefore, being matrix $\Phi$ given is sufficient for setting up the optimization problem \eqref{eqn:opt_7_finite}. To this end, for each $i,j\in\Iall$, we need to obtain the value of $\inner{\varphi_{i}}{\varphi_j}_{\Hk}$,  which demands calculating an infinite double summation, when $\Tbb=\Zbb_+$, or an improper double integral,  when $\Tbb=\Rbb_+$.
In general, to obtain these values one should employ numerical methods which are essentially inexact and also computationally demanding.
\iftrue
Meanwhile, one can derive these values analytically in specific but rather general situations, e.g., when the TC kernel is employed (see Appendix \ref{sec:appendix_setting_up} for more details).
Note that \eqref{eqn:opt_7_finite} is a convex {\em quadratically constrained quadratic program} program, for which there exist various efficient methods \cite{boyd2004convex}. 
For example, we can utilize methods which are based on {\em log-barrier functions} \cite{boyd2004convex} and adapt them suitably to the current settings (see Appendix \ref{sec:appendix_barrier_QCQP} for more details).
\else
Meanwhile, one can derive these values analytically in specific but rather general situations, e.g., when the TC kernel is employed\footnote{ see Appendix G of \cmr{arXiv2104.xxxxx} for more details.}.
Note that \eqref{eqn:opt_7_finite} is a convex {\em quadratically constrained quadratic program} program, for which there exist various efficient methods \cite{boyd2004convex}. 
For example, we can utilize methods which are based on {\em log-barrier functions} \cite{boyd2004convex} and adapt them suitably to the current settings\footnote{ see Appendix H of \cmr{arXiv2104.xxxxx} for more details.}.
\fi 

In many situations, such as in the example given in Section \ref{sec:problem_statement}, the constraint  $|G(\Jimage\omega)|=|\Fw(\vc{g})|^2\le 1-\epsilon$ is not binding on the whole frequency range $\Omega_{\Tbb}$. Accordingly,  it is not required to impose this constraint for each $\omega\in\Pscr$. Motivated by this fact and Corollary \ref{cor:Pbar}, we can introduce an iterative scheme. 
More precisely, let $\Pscrbar_0:=\emptyset$, and at iteration $k$, let $\Pscrbar_k$ be a given subset of $\Pscr$.
Consider the optimization problem \eqref{eqn:opt_4} where only the constraints corresponding to the frequencies in $\Pscrbar_k$ are imposed, i.e., we have the following program
\begin{equation}\label{eqn:opt_4_bar_k}
	\begin{array}{cl}
		\minOp_{\vc{g}\in\Hk}
		&
		\sumOp_{t\in\Tscr}(\Lu{t}(\vc{g})-y_t)^2 + \lambda\ \! \|\vc{g}\|_{\Hk}^2,
		\\
		\mathrm{s.t.}
		& 
		|\Fw(\vc{g})|^2\le 1-\epsilon, \quad  \forall \omega\in\Pscrbar_k.
	\end{array}
\end{equation}
Due to Corollary \ref{cor:Pbar}, we know that \eqref{eqn:opt_4_bar_k} has a unique solution, denoted by $\vcg_k$, which can be obtained by solving an equivalent  finite-dimensional convex program as in \eqref{eqn:opt_6_finite_bar}.
Given $\vcg_k$, one can check whether the constraint $|\Fw(\vcg_k)|^2\le 1-\epsilon$ is violated on the remaining frequencies in $\Pscr$. Accordingly, one can obtain the following set
\begin{equation}\label{eqn:DeltaPscr}
\Delta_k\Pscr := 
\Big\{\omega\in\Pscr\backslash\Pscrbar_k
\ \Big|\ 
 |\Fw(\vcg_k)|^2> 1-\epsilon 
\Big\},
\end{equation}
Subsequently, we update the frequency set as $\Pscrbar_{k+1}:=\Pscrbar_k \cup \Delta_k\Pscr$, and proceed to  the iteration $k+1$.
The iterative scheme stops when $\Delta_k\Pscr = \emptyset$. 
This happens either when $\Pscrbar_k=\Pscr$ or the solution $\vcg_k$ satisfies the constraint $|\Fw(\vcg_k)|^2\le 1-\epsilon$ for all $\omega\in\Pscr$.
Consequently, due to Corollary \ref{cor:Pbar}, we have $\vcg_k=\gstareps$ when the stopping condition is met.
It is noteworthy that given matrix $\Phi$,  one can extract $\Phibar_k$ as a sub-matrix of  $\Phi$, and subsequently,  the corresponding vectors $\vcabar_i$, $\vcbbar_j$ and $\vccbar_j$ in \eqref{eqn:opt_6_finite_bar} are obtained as columns of $\Phibar_k$.
This fact improves the computational tractability of the proposed approach.  
The Algorithm \ref{alg:ID_Dissip} summarizes the introduced iterative scheme.

\begin{algorithm}[t]
\caption{Kernel-Based Identification with Frequency Domain Side Information 
	$\|G^{(\Scal)}\|_{\Hcal_\infty}\le 1$
}\label{alg:ID_Dissip}
\begin{algorithmic}[1]
\State \textbf{input:} data set $\Dscr$, partition set $\Pscr$, kernel $\kernel$,   $\epsilon\in (0,1)$, matrix $\Phi$, unconstrained estimation $\vcg_0$ 
\State $k\leftarrow 0$ and $\vcg_k\leftarrow \vcg_0$.  
\State $\Pscrbar_k\leftarrow \emptyset $ and get $\Delta_k\Pscr$ due to \eqref{eqn:DeltaPscr}.
\While{ $\Delta_k\Pscr \ne \emptyset$}
\State $\Pscrbar\leftarrow \Pscrbar_k \cup  \Delta\Pscr_k$.
\State update $\Phibar$, $\vcabar_i$ for $i\in\Iu$, $\vcbbar_j$ and $\vccbar_j$, for $j\in\IPbar$.
\State solve optimization problem \eqref{eqn:opt_6_finite_bar} and $\vcg_k\leftarrow \sum_i x_i\phibar_i$. 
\State $\Pscr_k\leftarrow \Pscrbar$ and get $\Delta_k\Pscr$ due to \eqref{eqn:DeltaPscr}.
\EndWhile
\State \textbf{end} 
\State \textbf{Output:} $\gstareps$
\end{algorithmic}
\end{algorithm}
This procedure generates  
a strictly increasing sequence of sets $\emptyset=\Pscrbar_0\subset\Pscrbar_1\subset\Pscrbar_2\subset\ldots\subset\Pscr$. Since $\Pscr$ is a finite set, this sequence needs to be finite and therefore, Algorithm \ref{alg:ID_Dissip} stops after finite number of iterations.

\section{Numerical Examples}\label{sec:numerical}
In this section, we provide numerical examples demonstrating the performance of the proposed scheme in Algorithm \ref{alg:ID_Dissip}.

\begin{example}\label{exm:pf_ext}\normalfont
We consider the settings of the example given in Section \ref{sec:problem_statement}. 	
We set $\epsilon=10^{-5}$ and take partition set $\Pscr=\{\omega_i|i=0,\ldots,\nP\}$ for interval $[0,\pi]$ such that $\omega_i=\frac{\pi}{\nP}i$, for $i=0,\ldots,\nP$, with $\nP=3141$. 
We employ TC kernel and apply the Algorithm \ref{alg:ID_Dissip}.
In order to tune the hyperparameters, we utilize Bayesian optimization with lower-confidence-bound acquisition function \cite{srinivas2012information} to find the hyperparameters minimizing an objective function defined based on a cross-validation procedure. 
More precisely, for a choice of hyperparameters, 
the first $100$ points of $\Dscr$ are used for training the model and following this, 
the cost function in be optimized by Bayesian optimization is defined as the validation error calculated using the remaining points of $\Dscr$. 
Starting from $\Pscrbar_0=\emptyset$, we estimate $\vcg_0$ as the solution of the unconstrained problem. This solution violates the constraints on $\Pscrbar_1:=\{\omega_i| i=36,\ldots,140\}$.
Proceeding from this partition set, we obtain $\vcg_1$ which satisfies the constraints for all of the frequencies in $\Pscr$. Therefore, $\Delta_1\Pscr=\emptyset$ and the algorithm terminates with $\gstareps = \vcg_1$.

Figure \ref{fig:example1} shows the transfer function of $\Gstareps$ along with the
estimated models $\hat{G}_1$ and $\hat{G}_2$, obtained in Section \ref{sec:problem_statement}, and also the
estimated model $\hat{G}_3$ resulted from the method in \cite{abe2016subspace} which is briefly reviewed later in this section.
One can see that the result of proposed scheme satisfies the desired feature, and also, fits better to the true transfer function $\GS$.
For quantitative evaluation and comparison of the estimated impulse response, we use \emph{coefficient of determination}, also known as \emph{R-squared}, which is defined as following 
\begin{equation}\label{eqn:R2}
\mathrm{fit}(\vcg) = 100 \times (1-\frac{\|\vcg-\gS\|_2}{\|\gS\|_2})
\end{equation}
where $\vcg$ is the estimated impulse response. 
Here, we have $\mathrm{fit}(\hat{\vcg}_1)=87.13$\%, $\mathrm{fit}(\hat{\vcg}_2)=87.15$\%, and $\mathrm{fit}(\hat{\vcg}_3)=79.08$\%, 
where  $\hat{\vcg}_i$ is the impulse responses  corresponding to $\hat{G}_i$, for $i=1,2,3$.
On the other hand, we have $\mathrm{fit}(\gstareps)=90.84$\% which shows an improvement in the estimation as well as satisfying the given side information. 
Note that while $\hat{G}_3$ also satisfies this feature, the proposed approach outperforms significantly in terms of fitting performance. 
\xqed{$\triangle$}
\begin{figure}[t]
	\centering
	\includegraphics[width =0.45\textwidth]{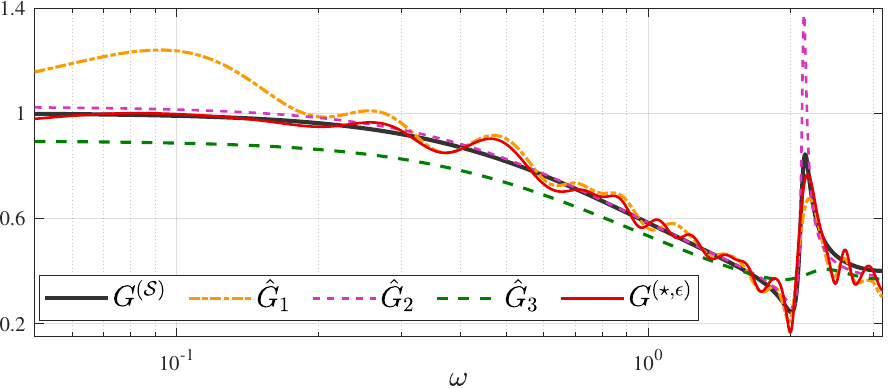}
	\caption{The transfer function of system \eqref{eqn:sys_example}, the model estimated using the proposed approach, $\Gstareps$,  and the estimated models $\hat{G}_1$, $\hat{G}_2$ and $\hat{G}_3$.}
	\label{fig:example1}
\end{figure}
\end{example}

The next example  compares the proposed approach with the method in \cite{abe2016subspace} which is a variant of subspace identification scheme incorporating frequency domain side information, and accordingly, it is denoted by $\FDIsub$ in the followings. 
In this approach, initially a sequence of state variables is estimated, and then, a formulation of subspace identification method is presented where matrix inequalities, coming from the Kalman-Yakubovich-Popov (KYP) lemma, are imposed to estimation problem for the incorporation of the side information. 
Following this, the resulting nonlinear program is reduced to a convex one using appropriate transformations.

\begin{example}\label{exm:MC_compare}\normalfont
In this example, we perform numerical experiments to compare the proposed method with \cite{abe2016subspace}. We generate randomly four sets of $150$ stable systems using \texttt{drss} \textsc{Matlab}'s function with orders in the range of $\{10,\ldots,30\}$ and poles not larger than $0.98$. Following this, the systems are normalized with their $\Hcal_{\infty}$-norm, and then, actuated with a realization of standard random white Gaussian signal $\vc{u} = (u_t)_{t=0}^{\nD-1}$, i.e., $u_0,\ldots,u_{\nD-1}$ are i.i.d. samples of $\Ncal(0,1)$.  
The output of system is corrupted with additive Gaussian noise. The variance of output noise is chosen such that we have $10$dB, $20$dB, $30$dB and $40$dB signal-to-noise ratio (SNR) in the respective sets of the systems.
Similar to the previous example, we tune the hyperparameters of both the proposed approach and the $\FDIsub$ 
using Bayesian optimization \cite{srinivas2012information} and cross-validation.
The settings for the proposed approach are similar to ones employed in Example \ref{exm:pf_ext}. The box plots of results are shown in Figure \ref{fig:example2}.
While, we can see that the performance of the both of these approaches improves when SNR increases, the kernel-based approach significantly outperforms the subspace-based method $\FDIsub$. 
\begin{figure}[htb]
\begin{center}
	\includegraphics[width=0.45\textwidth]{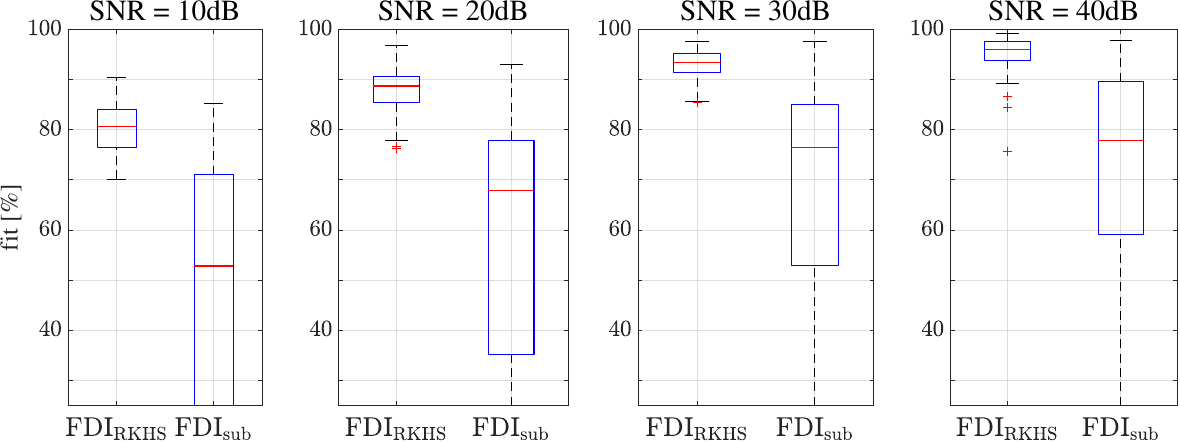}
\end{center}	
	\caption{The performance of $\FDIsub$ \cite{abe2016subspace} is compared with the proposed method, $\FDIRKHS$.}
	\label{fig:example2}
\end{figure}
\\\noindent\underline{\textit{Discussion}}: The $\FDIsub$ approach \cite{abe2016subspace} suffers from the well-known model order selection issue \cite{ljung2020shift}.
On the other hand, the proposed scheme is a derivation of kernel-based regularization methods, and therefore, tuning the complexity of model is performed by the powerful concept of estimating continuous regularization hyperparameters rather than picking an integer order based on a selection rule \cite{pillonetto2014kernel,ljung2020shift}.
Moreover, while the proposed approach works directly with input-output data, $\FDIsub$ employs an estimation of the state trajectory which leads to model estimation prone to high variance and noisy results. \xqed{$\triangle$}
\end{example}

The next example is adapted from \cite{abe2016subspace} and	demonstrate employing the proposed method for solving a joint case of Problem \ref {prob:Hinft_err_eps} and Problem \ref{prob:Hinft_weight_eps} introduced in Section \ref{sec:problem_statement}.
\begin{example}\label{exm:MC_DT_dc_gain}\normalfont
Consider \emph{unknown} system $G(z)$ defined as
\begin{equation}
G(z) = \frac{0.1204\ \! z + 0.1184}{z^2 - 1.7114\ \! z + 0.9502},
\end{equation} 
which is actuated with  $\vc{u} := (u_t)_{t=0}^{\nD-1}$, a standard random Gaussian signal of length $\nD=150$, and the output of system, $\vc{y} := (y_t)_{t=0}^{\nD-1}$, corrupted with additive Gaussian noise, is measured with SNR = $20$dB.

Let assume  that we are given information on the {dc-gain} of system, i.e., we know that $\alphadc:=G(\expe^{\Jimage 0}) =1$.
To incorporate this information in the estimation of the impulse response of system, ${\vcg}$, one can propose two approaches, 
\emph{filter-based} (FB) and \emph{directly constrained} (DC), which are discussed in the following.
In the \emph{filter-based} approach, we identify system $G_W :=W (G -\alphadc)$ with constraint 
$\|G_W\|_{\Hcal_{\infty}}\le \gamma$, where $W $ is a suitably designed stable and inverse-stable transfer function, and $\gamma$ is a scalar chosen appropriately. Following this, $G$ is estimated as $W^{-1} \hat{G}_W +\alphadc$. 
To impose the constraint only on the low
frequencies, $W$ should be a low-pass filter with cut-off frequency close to $\omega=0$ and with comparatively large gain in low frequencies.
Here, we employ  $W(z)=a(\frac{z-1+b}{z-1+a})^2$, where $a=10^{-4}$ and $b=10^{-3}$.
Following this, the outputs of system is modified to $\vc{y}-\alphadc\vc{u}$, and then, we apply filter $W$ on them to obtain data points $\vc{p}:=(p_t)_{t=0}^{\nD-1}$.
The input-output pairs $\{(u_t,p_t)|t=0,\ldots,\nD-1\}$ correspond to the system $F$ to be estimated with constraint $\|F \|_{\Hcal_{\infty}}\le \gamma=10^{-3}$.
\begin{figure}[tb]
	\begin{center}
		\includegraphics[width=0.45\textwidth]{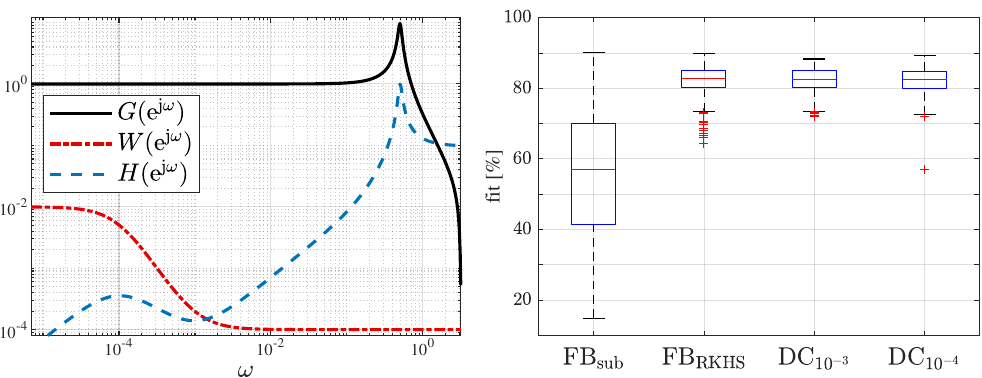}
	\end{center}	
	\caption{Left: transfer function of $G$, $W$ and $H$. Right: The performance of different approaches is compared for the estimation of impulse response of  $G$.}
	\label{fig:example3_bode_box}
\end{figure}
Figure \ref{fig:example3_bode_box} shows bode plots of $G$, $W$ and system $H$ defined as $H:=\gamma^{-1} G_W$ for which one can see that $\|H\|_{\Hcal_{\infty}}\le 1$.
For the estimation of $G_W$, we can either employ the subspace method developed in \cite{abe2016subspace},  or the proposed identification scheme.
We denote these methods respectively by $\FBsub$ and $\FBRKHS$.
In the \emph{directly constrained} approach, we estimate system $K:=G-\alphadc$ from the input-output data $\big((u_t,y_t-\alphadc u_t)\big)_{t=0}^{\nD-1}$ subject to constraint $|K(\expe^{\Jimage 0})|\le \gamma$, which is a special case of problem \eqref{eqn:opt_1} where $\Omega_{\Tbb}$ is the singleton set $\{0\}$. 
Here, we consider two cases of $\gamma = 10^{-3}$ and $\gamma = 10^{-4}$.
We generate $150$ realizations of noise and apply the introduced approaches to estimate $\vc{g}$ with given side information. Figure \ref{fig:example3_bode_box} shows the boxplot for the estimation performance of the estimated impulse responses, $\hat{\vcg}$.
The histogram of the resulting {dc-gain}, $\hat{\alpha}_{\text{dc}}$, is shown in Figure \ref{fig:example3_hist}. 
\begin{figure}[t]
	\begin{center}
		\includegraphics[width=0.4\textwidth]{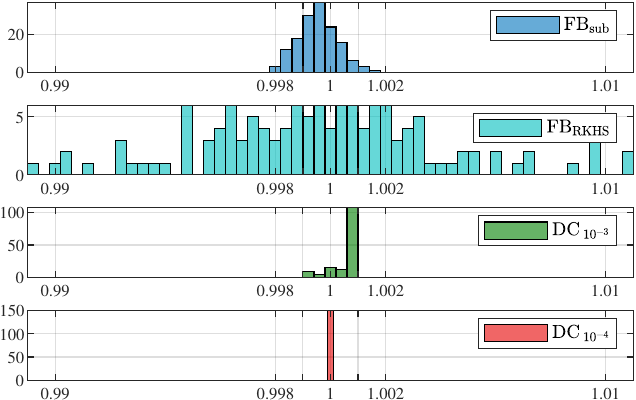}
	\end{center}	
	\caption{The histogram of the estimated values of the dc-gain.}
	\label{fig:example3_hist}
\end{figure}
For further quantitative evaluation and comparison of the estimated values, the bias, variance and mean squared error (MSE) of the estimations are presented in Table \ref{tbl:mc}.
\begin{table}[h]
\renewcommand{\arraystretch}{1.2} 
\centerline{
\begin{tabular}{lcccc}
\toprule\hline
 & \textbf{$\FBsub$} & \textbf{$\FBRKHS$} & \textbf{$\text{DC}_{10^{-3}}$} & \textbf{$\text{DC}_{10^{-4}}$} \\ 
\hline 
\textbf{$\text{Bias}^2$}$(\hat{\vcg})$
& 0.338 & 0.027 & 0.024 & 0.025 \\
\textbf{$\text{Var}$}$(\hat{\vcg})$
& 0.227 & 0.059 & 0.057 & 0.060 \\ 
\textbf{$\text{MSE}$}$(\hat{\vcg})$
& 0.564 & 0.086 & 0.081 & 0.085 \\ 
\hline
\textbf{$\text{Bias}^2$}$(\hat{\alpha}_{\text{dc}})\ [\times 10^{-6}]$ 
& 0.214 & 0.151 & 0.418 & 0.003 \\
\textbf{$\text{Var}$}$(\hat{\alpha}_{\text{dc}})\ [\times 10^{-6}]$    
& 0.478 & 81.691 & 0.306 & 0.003 \\
\textbf{$\text{MSE}$}$(\hat{\alpha}_{\text{dc}})\ [\times 10^{-6}]$    
& 0.692 & 81.842 & 0.724 & 0.006\\ 
\hline
\bottomrule
\end{tabular}
}
\caption{ \label{tbl:mc} 
	Bias, variance and MSE for the estimations}
\end{table}
\\\noindent\underline{\textit{Discussion}}: 
The numerical experiment shows that the $\FBRKHS$ and directly constrained methods 
have similar performance in the estimation of impulse response. Moreover, they provide considerably more accurate estimation of $\vcg$ comparing to the $\FBsub$ method, which is expected due to Example \ref{exm:MC_compare}.
Additionally, comparing the {dc-gain} calculated from the results, one can see that the values obtained from the directly constrained method with $\gamma=10^{-4}$ are significantly closer to the given {dc-gain} of system.
The directly constrained method with $\gamma=10^{-3}$ and $\FBsub$ method behave similarly accurate regarding the calculated {dc-gain} in the sense of MSE, which is potentially  due to the same choice for $\gamma$. On the other hand, the former method, which is based on the proposed approach, estimates the impulse response $\vcg$ significantly more accurate. 
Due to the current choice of $W$, employing smaller values of $\gamma$ is not feasible in the filter-based approach as one can see from the transfer function $H$ shown in Figure \ref{fig:example3_bode_box}, where we have $\|H\|_{\Hcal_{\infty}}=0.9984$.
Meanwhile, the employed $W$ still does not impose the desired constraint on the low  frequencies strong enough, which results in a considerable degree of freedom for the {dc-gain}.
Consequently, while the $\FBRKHS$ method estimates the impulse response more precisely than  $\FBsub$, the corresponding calculated {dc-gain} values are not as accurate as the ones provided by $\FBsub$ method.
The better performance of $\FBsub$ method regarding the calculated {dc-gain} is potentially due to being the original system low order.
To improve the $\FBRKHS$  from this aspect, we need to design low-pass filter $W$ with narrower band and higher gain, which results in large order filters with  poles and zeros close to $z=1$. 
This can lead to numerical instability and necessity of larger set of data. 
On the other hand, the directly constrained methods do not depend on the choice of $W$ and perform well, especially for small enough values of $\gamma$ like  $\gamma=10^{-4}$.
\xqed{$\triangle$}
\end{example}

The next example concerns Problem \ref{prob:Hinft_le_1} for the case of continuous-time and demonstrates the role of proposed approach for the continuous-time systems. 

\begin{example}\label{exm:CT}\normalfont
Consider the \emph{unknown} continuous-time system $\Scal$ with transfer function $\GS(s)$ defined \cite{scandella2020kernel} as
\begin{equation}\label{exm:sys_CT}\!\!\!\!
\GS(s)\!=\!- 
\frac{2s^3 \!+\! 3.6s^2 \!+\! 2.095s \!+\! 0.396s} {0.461s^4 \!+\! 2.628s^3 \!+\! 4.389s^2 \!+\! 2.662s \!+\! 0.519},\!\!
\end{equation}
with side information $\|\GS\|_{\Hcal_{\infty}}\le 1$. 
Let the system be initially at rest and actuated with a random switching pulse signal as shown in Figure \ref{fig:example4}. 
Then, the output of system is measure at $\nD=250$ time instants 
$t_0,\ldots,t_{\nD-1}\in[0,10]$, where
$t_k = kT_{\mathrm{s}}+\delta_k$, for $k=0,\ldots,\nD-1$, with $T_{\mathrm{s}}=0.04$ and $\delta_k\sim \mathrm{Uniform}([0,T_{\mathrm{s}}])$.
The output measurements, $\{y_{t_i}\}_{i=0}^{\nD-1}$, are subject to additive white Gaussian noise such that SNR is  $20$dB. 
We interpolate the output values at time instants $\bar{t}_k = (k\! +\! 1)T_{\mathrm{s}}$, for $k=0,\ldots,\nD-1$, using shape-preserving piecewise cubic interpolation available in \textsc{Matlab}'s function \texttt{interp1} and \texttt{pchip} option.
The SNR in the interpolated outputs, $\{\bar{y}_{t_i}\}_{i=0}^{\nD-1}$, is $20.68$dB. This slight improvement in SNR is potentially due to being the initial point almost regularly distributed in the sampling time interval.

We employ \textsc{CONTSID Toolbox} \cite{garnier2018contsid} and obtain transfer function estimations $\hat{G}_1$ and $\hat{G}_2$, respectively using 
\texttt{tfsrivc} and \texttt{rivc} functions with known orders of system.
Furthermore, we identify the system through \emph{indirect} approaches, i.e., first a discrete-time impulse response is estimated using the interpolated data, and then, the continuous-time version is derived by shape-preserving piecewise cubic interpolation method as discussed above.
To this end, we use the subspace method \cite{abe2016subspace} explained in Example \ref{exm:MC_compare} and also, the discrete-time version of the proposed method. Let the corresponding transfer functions be denoted respectively  by $\hat{G}_{\mathrm{sub}}$ and $\Gstareps_{\mathrm{ind}}$. 
In obtaining $\Gstareps_{\mathrm{ind}}$, $\epsilon$ is set to $10^{-3}$ and the rest of settings are taken similar to Example \ref{exm:pf_ext}. 
Starting from $\Pscrbar_0=\emptyset$, the algorithm terminates in the second iteration with $\Pscrbar_1:=\{\omega_i| i=50,\ldots,76\}$.

In addition to the above methods, we identify the system in a \emph{direct} approach based on the proposed algorithm for the case of continuous-time, and using original measurement data. 
Accordingly, we apply Algorithm \ref{alg:ID_Dissip} where $\epsilon$ is chosen as  $10^{-3}$, the partition set is taken as $\Pscr=\{\omega_i=10^{-2}i|i=0,\ldots,\nP=10^4\}$, TC kernel is employed, and hyperparameters are tuned similar to Example \ref{exm:pf_ext}.
Initially, we have  $\Pscrbar_0=\emptyset$ where the resulting solution, $\vcg_0$, violates the constraints on $\Delta\Pscr_0:=\{\omega_i| i=159,\ldots,205\}$.
In the next iteration, $\Pscrbar_1$ is $\Delta\Pscr_0$ for which we obtain  solution $\vcg_1$. The result  satisfies the constraints for the frequencies in $\Pscr$, and,  hence, $\Delta_1\Pscr=\emptyset$, i.e., the algorithm terminates in the second iteration. Denote the estimated impulse response and the corresponding transfer function respectively by $\gstareps_{\mathrm{dir}}\!$ and $\Gstareps_{\mathrm{dir}}\!$. 

\begin{figure}[t]
	\centering
	\includegraphics[width =0.4\textwidth]{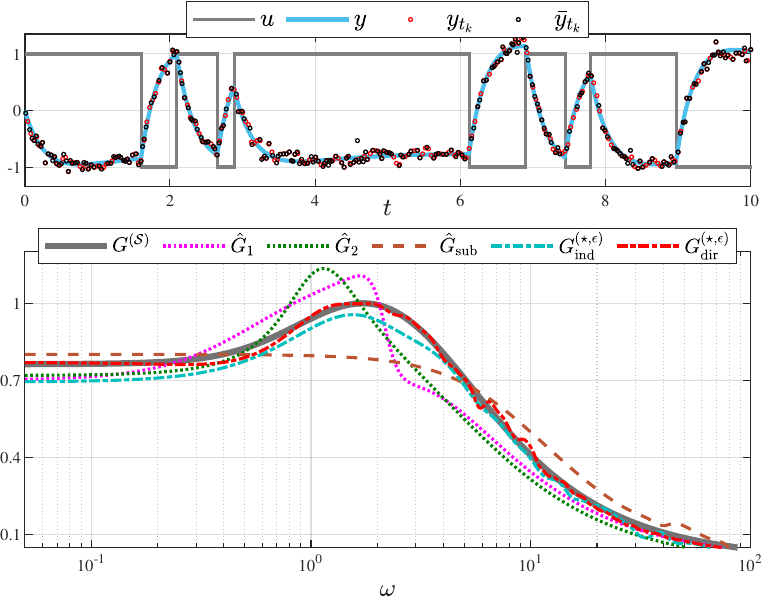}
	\caption{Top: the input and output of system, output measurements and interpolations. Bottom:  the transfer function of system \eqref{exm:sys_CT}, the model estimated using the proposed approach, $\Gstareps_{\mathrm{dir}}$,  and the estimated models $\hat{G}_1$, $\hat{G}_2$, $\hat{G}_{\mathrm{sub}}$ and $\Gstareps_{\mathrm{ind}}$.
	}
	\label{fig:example4}
\end{figure}
In Figure \ref{fig:example4}, the estimated transfer functions 
$\hat{G}_1$, $\hat{G}_2$, $\hat{G}_{\mathrm{sub}}$, $\Gstareps_{\mathrm{ind}}$ and $\Gstareps_{\mathrm{dir}}$ are shown and graphically compared with $\GS$.
From the figure, one can see that the side information is preserved only for the estimated transfer functions $\hat{G}_{\mathrm{sub}}$, $\Gstareps_{\mathrm{ind}}$ and $\Gstareps_{\mathrm{ind}}$, and it is violated by $\hat{G}_1$ and $\hat{G}_2$. Indeed, we have $\|\hat{G}_1\|_{\Hcal_{\infty}}=1.11$ and $\|\hat{G}_2\|_{\Hcal_{\infty}}=1.17$.
Furthermore, we can see that the proposed method significantly outperforms other schemes.
To evaluate quantitatively the estimation results, we employ \emph{R-squared} metric defined in \eqref{eqn:R2}.
Accordingly, the fitting results are 
$\mathrm{fit}(\hat{\vcg}_1) = 78.93$\%, 
$\mathrm{fit}(\hat{\vcg}_2) = 63.37$\%, 
$\mathrm{fit}(\hat{\vcg}_{\mathrm{sub}}) = 73.76$\%, 
$\mathrm{fit}(\gstareps_{\mathrm{ind}})  = 87.15$\%, and
$\mathrm{fit}(\gstareps_{\mathrm{dir}})  = 91.86$\%, 
where  
$\hat{\vcg}_1$, $\hat{\vcg}_2$, $\hat{\vcg}_{\mathrm{sub}}$ and $\gstareps_{\mathrm{ind}}$  
are the impulse responses corresponding 
$\hat{G}_1$, $\hat{G}_2$, $\hat{G}_{\mathrm{sub}}$ and $\Gstareps_{\mathrm{ind}}$.  

For further comparison, we perform a Monte Carlo experiment with two sets of $100$ runs, where the settings are similar to the numerical experiment above with the only difference that regular sampling  is employed in the first set.
In the case of regular sampling, the original measurement samples coincide with the interpolated ones. 
On the other hand, for the irregular case, the interpolation of output measurements differs from the original ones, however the resulting SNR is in the interval $19.8$dB to  $21.6$dB, with average SNR equal to $20.8$dB.
This is due to  being  the time instants of output measurements close to regular and the fact that the response of system is fairly smooth.
Using the previously mentioned direct and indirect approaches, we estimate the impulse response of system in each run, i.e., we obtain impulse response estimations $\hat{\vcg}_{\mathrm{sub}}$, $\gstareps_{\mathrm{ind}}$ and  $\gstareps_{\mathrm{dir}}$, which are satisfying the given side information.
Figure \ref{fig:example4_boxplot} shows the boxplot comparing the estimation performance of the results. Also, Table \ref{tbl:mc_exm4} provides the bias, variance and mean squared error (MSE) of the estimations.
\begin{figure}[t]
	\begin{center}
		\includegraphics[width=0.4\textwidth]{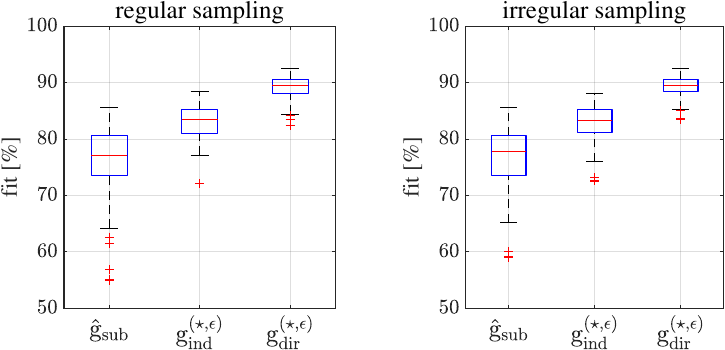}
	\end{center}	
	\caption{The performance of different estimation for the cases with regular sampling (left) and irregular sampling (right).}
	\label{fig:example4_boxplot}
\end{figure}
\begin{table}[b]
\renewcommand{\arraystretch}{1.05} 
\centerline{
\begin{tabular}{c|lccc}
\toprule 
\hline
\multirow{2}{*}{samples}
& \multirow{2}{*}{method}
& \textbf{$\text{Bias}^2$}$(\hat{\vcg})$
& \textbf{$\text{Var}$}$(\hat{\vcg})$   
& \textbf{$\text{MSE}$}$(\hat{\vcg})$   
\\ 
&
&{$[\times 10^{-6}]$}
&{$[\times 10^{-6}]$}  
&{$[\times 10^{-6}]$}  
\\ \hline 
\multirow{3}{*}{\rotatebox[origin=c]{90}{regular}}
&
$\hat{G}_{\mathrm{sub}}$
& 6.39 & 7.97 & 14.36 \\
&
$\Gstareps_{\mathrm{ind}}$
& 5.33 & 1.73 & 7.07 \\ 
&
$\Gstareps_{\mathrm{dir}}$
& 2.25 & 0.63 & 2.88 \\ 
\hline
\multirow{3}{*}{\rotatebox[origin=c]{90}{irregular}}
&
$\hat{G}_{\mathrm{sub}}$
& 7.40 & 6.36 & 13.76 \\
&
$\Gstareps_{\mathrm{ind}}$
& 5.48 & 1.79 & 7.27 \\ 
&
$\Gstareps_{\mathrm{dir}}$
& 2.16 & 0.67 & 2.83 \\ 
\hline
\bottomrule
\end{tabular}
}
\caption{\label{tbl:mc_exm4} Bias, variance and MSE for the estimations}
\end{table}
\\\noindent\underline{\textit{Discussion}}:
The first and most important observation is that direct method significantly outperforms the indirect approaches.
This observation is in confirmation with the literature on continuous-time system identification \cite{garnier2008direct} and highlights the importance of developing the proposed scheme for the case of continuous-time impulse responses.
Furthermore, one can see  that the proposed methods shows better estimation performance comparing to the subspace method, which is expected due to Example \ref{exm:MC_compare}.
Comparing the estimation results for the regularly and irregularly sampling cases, the minor improvement of the subspace method \cite{abe2016subspace} for the case of irregular measurement samples is potentially due to slightly higher SNR in the interpolation data.
Meanwhile, for other two approaches we observe almost similar behavior, and the current numerical experiments does not reveal further aspects.
Finally, we should note that when the sampling is highly irregular or the output of system is wiggles significantly, the interpolation either provides erroneous results or it is infeasible.
In this situation, one can only employ the direct approach, i.e., the proposed method for the case of continuous-time. 
This issue again highlights the importance of the developed scheme for the estimation of continuous-time impulse responses.
\xqed{$\triangle$}	
\end{example}

\section{Conclusion}\label{sec:conclusion}
The problem identification with side information on the dissipativity of the system has been studied in this paper.
We have employed a RKHS frame work allowing considering both of the discrete-time and continuous-time dynamics.
The problem is initially formulated as an infinite-dimensional optimization problem  estimating a stable impulse response fitting to the data and satisfying an $\Hcal_\infty$-norm constraint.
We have shown that the problem is well-defined and convex with a unique solution.
To obtain the solution, we have proposed a heuristic tightly approximating the solution. It is shown that the proposed approach is equivalent to solve a finite-dimensional convex quadratically constrained quadratic programming. 
The efficiency of the discussed method is verify by several numerical examples.
\appendix
\section{Appendix} 
\label{sec:AppendixProofs}
\subsection{Proof of Lemma~\ref{lem:Fw}} \label{sec:appendix_proof_lem_Fw}
Consider the case $\Tbb=\Rbb_+$ and let 
$\vcgone:=(\gone_t)_{t\in\Rbb_+}, \vcgtwo:=(\gtwo_t)_{t\in\Rbb_+}\in\Hk$ and 
$\alpha_1,\alpha_2\in \Cbb$. 
Since $\Hk\subseteq\Lscr_1$, we know that $\|\vcgone\|_{1}$ and $\|\vcgtwo\|_{1}$ are finite.
Also, for any  $t\in\Rbb_+$ and any $\omega\in\Rbb_+$, from triangle inequality, we have
$|(\alpha_1\gone_t+\alpha_2\gtwo_t)\ \expe^{-\Jimage\omega t}| 
\le  |\alpha_1||\gone_t|+|\alpha_2||\gtwo_t|$. 
Hence, 
$\big{(}(\alpha_1\gone_t+\alpha_2\gtwo_t)\ \expe^{-\Jimage\omega t}\big{)}_{t\in\Rbb_+}$ is absolutely integrable. Therefore, the improper integral
$\Fw(\alpha_1\vcgone+\alpha_2\vcgtwo):=\int_{\Rbb_+}(\alpha_1\gone_t+\alpha_2\gtwo_t)\ \expe^{-\Jimage\omega t}\drm t$  
converges and we have
\begin{equation}\label{eqn:proof_lem_2}
\Fw(\alpha_1\vcgone+\alpha_2\vcgtwo) := \alpha_1\Fw(\vcgone)+\alpha_2\Fw(\vcgtwo).    
\end{equation}
Therefore,  $\Fw(\alpha_1\vcgone+\alpha_2\vcgtwo)$ is a well-defined $\Cbb$-valued linear map.
Moreover, from $\expe^{-\Jimage\omega t} = \cos(\omega t)-\Jimage\sin(\omega t)$, one can easily see that 
$\Fwr(\vc{g}) = \real{\Fw(\vc{g})}$,
$\Fwi(\vc{g}) = \imag{\Fw(\vc{g})}$, and
$\Fw(\vc{g}) = \Fwr(\vc{g}) + \Jimage \Fwi(\vc{g})$, for any $\vc{g}\in\Hk$.
Let $\alpha_1,\alpha_2\in\Rbb$ and  compare the real and imaginary parts of the left-hand side and right-hand side of \eqref{eqn:proof_lem_2}. Hence, we have 
\begin{equation*}
\begin{array}{ccl}
\Fwr(\alpha_1\vcgone+\alpha_2\vcgtwo) &=& \alpha_1\Fwr(\vcgone)+\alpha_2\Fwr(\vcgtwo),    
     \\
\Fwi(\alpha_1\vcgone+\alpha_2\vcgtwo) &=& \alpha_1\Fwi(\vcgone)+\alpha_2\Fwi(\vcgtwo),    
\end{array}    
\end{equation*}
and also $|\Fwr(\alpha_1\vcgone+\alpha_2\vcgtwo)|,|\Fwi(\alpha_1\vcgone+\alpha_2\vcgtwo)| <\infty$.
This shows that $\Fwr$ and $\Fwi$ are well-defined linear maps.

Since $\Fwr(\vc{g}) = \real{\Fw(\vc{g})}$, from triangle inequality and the definition of $\Fw$, one can see
\begin{equation}
\begin{split}
|\Fwr&(\vc{g})| = |\real{\Fw(\vc{g})}| 
\\&\le
|\Fw(\vc{g})| = \bigg|\int_{\Rbb_+}g_t \expe^{-\Jimage\omega t}\drm t\bigg| \le \int_{\Rbb_+}|g_t|\drm t,
\end{split}	
\end{equation}
for any $\vc{g}\in\Hk$. 
Subsequently, due to the reproducing property and the Cauchy-Schwartz inequality, we have
$|\Fwr(\vc{g})| \le  
\int_{\Rbb_+}|\inner{\vcg}{\kernel_t}|\drm t \le 
\int_{\Rbb_+}\|\vcg\|\|\kernel_t\|\drm t$.
From $\|\kernel_t\|^2 = \inner{\kernel_t}{\kernel_t} = \kernel(t,t)$ and the definition of $\mu_0$, one can see 
$|\Fwr(\vc{g})| \le \mu_0\|\vcg\| $,
and hence, we have
\begin{equation*}
\|\Fwr\|_{\Lcal(\Hk,\Rbb)}
:= 
\sup_{\substack{\vc{g}\in\Hk\\ \|\vc{g}\|\le 1}} |\Fwr(\vc{g})| 
\le
\sup_{\substack{\vc{g}\in\Hk\\ \|\vc{g}\|\le 1}}  \mu_0\|\vcg\| =\mu_0.
\end{equation*}
Similarly, one can show that
$\|\Fwi\|_{\Lcal(\Hk,\Rbb)}\le \mu_0$.
Therefore, $\Fwr$ and $\Fwi$ are linear bounded functionals, i.e., $\Fwr,\Fwi\in \Lcal(\Hk,\Rbb)$.
Since $\Hk$ is a Hilbert space, due to the Riesz's representation theorem, we have that there exist unique  
$\phiwr:=(\phiwrk{t})_{t\in\Rbb_+}\in\Hk$ and 
$\phiwi:=(\phiwik{t})_{t\in\Rbb_+}\in\Hk$ such that, 
for any $\vcg\in\Hk$, one has
$\Fwr(\vcg)=\inner{\phiwr}{\vcg}$
 and 
$\Fwi(\vcg)=\inner{\phiwi}{\vcg}$.
Subsequently, from the reproducing property of kernel, for any $t\in\Rbb_+$, we have that
\begin{equation*}
	\begin{array}{ll}
		&\phiwrk{t}=\inner{\phiwr}{\kernel_t} 
		=
		\Fwr(\kernel_t)
		= \int_{\Rbb_+}\!\!\!\kernel(t,s)\cos(\omega s)\drm s,\\
		&\phiwik{t}=\inner{\phiwi}{\kernel_t} 
		\!=\!
		\Fwr(\kernel_t)
		= -\!\int_{\Rbb_+}\!\!\!\kernel(t,s)\sin(\omega s)\drm s.
	\end{array}	
\end{equation*}
This concludes the proof for the case $\Tbb=\Rbb_+$.
Similar lines of arguments hold when $\Tbb=\Zbb_+$ and the proof is concluded.
\qed
\subsection{Proof of Lemma~\ref{lem:Lui_bounded}} \label{sec:appendix_proof_lem_Lui_bounded}
Let assume $\Tbb=\Rbb_+$. For any $\vc{g}:=(g_s)_{s\in\Rbb_+}\in\Hk$, from the triangle inequality
and \emph{reproducing property}, one has
\begin{equation*}
|\Lu{t}(\vcg)| 
\le
\int_{\Rbb_+}\!\!|g_s| |u_{t-s}|\drm s
=
\int_{\Rbb_+}\!\!|\inner{\vcg}{\kernel_s} | |u_{t-s}|\drm s.
\end{equation*}
Then, due to the Cauchy-Schwartz inequality and since $\vc{u}\in\Lscr_\infty$, we have
\begin{equation}\label{eqn:Ltug_le_normg_otherterms}
\begin{split}
|\Lu{t}(\vcg)| 
&\le
\int_{\Rbb_+}\|\vcg\| \kernel(s,s)^{\frac{1}{2}} |u_{t-s}|\drm s
\\&\le \|\vc{g}\| \ 
\|\vc{u}\|_\infty
\int_{\Rbb_+}\kernel(s,s)^{\frac{1}{2}}\drm s, 
\end{split}	
\end{equation}
where we have used the fact that $\|\kernel_s\| =\kernel(s,s)^{\frac12}$, for any $s\in\Rbb_+$.
This shows that $(g_su_{t-s})_{s\in\Rbb_+}$ is absolutely integrable, for any $\vcg=(g_t)_{t\in\Rbb_+}\in\Hk$, and therefore $\Lu{i}$ is a well-defined linear map.
Moreover, from the definition of $\|\Lu{t}\|_{\Lcal(\Hk,\Rbb)}$ and \eqref{eqn:Ltug_le_normg_otherterms}, we have
\begin{equation*}
\|\Lu{t}\|_{\Lcal(\Hk,\Rbb)} 
\le
\|\vc{u}\|_\infty
\int_{\Rbb_+}\!\!\kernel(s,s)^{\frac{1}{2}}\drm s= \mu_0 \|\vc{u}\|_\infty. 
\end{equation*}
Therefore, $\Lu{t}$ is a bounded functional. Subsequently, due to Riesz's representation theorem, there exists a unique element $\phiu{t}=(\phiu{t,s})_{s\in\Rbb_+}\in \Hk$ such that $\Lu{t}(\vcg) =\inner{\phiu{t}}{\vcg}$, for any $\vcg\in\Hk$.
From reproducing property of kernel, we know that
$\phiu{t,s} = \inner{\phiu{t}}{\kernel_s}$, for any $s\in\Rbb_+$. 
Since $\inner{\phiu{t}}{\kernel_s} = \Lu{t}(\kernel_s)$ and due to the definition of $\Lu{t}$, we have
\begin{equation*}
\!\!\!
\phiu{t,s} = 
\Lu{t}(\kernel_s) 
= 
\!\!\int_{\Rbb_+}\!\!\!\kernel_s(\tau) u_{t-\tau}\drm \tau
= 
\!\!\int_{\Rbb_+}\!\!\!\kernel(s,\tau) u_{t-\tau}\drm \tau.
\end{equation*}
Similar arguments hold for $\Tbb\! =\! \Zbb_+$.
\qed
\subsection{Proof of Theorem~\ref{lem:bound_mu_n}} 
\label{sec:appendix_proof_bound_mu_n}
When $\Tbb=\Zbb_+$, from the upper bound of kernel, we have 
\begin{equation*}
\mu_n \le \sum_{t\in\Zbb_+}t^n\ \!\kernel(t,t)^{\frac{1}{2}} 
\le 
\int_{\Rbb_+}t^n(\gamma\expe^{-\beta t})^{\frac{1}{2}}\drm t,
\end{equation*} 
where the second inequality is due to being $f(t):=\expe^{-\frac{1}{2}\beta t}$ a non-increasing function.
This inequality holds for the case of $\Tbb=\Rbb_+$ as well. 
Using change of variable $s=\frac{\beta}{2}t$, we have
\begin{equation*}
	\mu_n \le  
	\gamma^{\frac12}\frac{2^{n+1} }{\beta^{n+1}}
	\int_{\Rbb_+}s^n\expe^{-s}\drm s
	=
	\gamma^{\frac12}\frac{2^{n+1} }{\beta^{n+1}}
	\Gamma(n+1),
\end{equation*}
where the equality is due to the definition of Gamma function. 
The claim follows from $\Gamma(k+1)=k!$, for $k\in\Zbb_+$.
\qed
\subsection{Proof of Lemma~\ref{lem:gm_Lipschitz}}
\label{sec:appendix_proof_gm_Lipschitz}
Given $\vcg:=(g_t)_{t\in\Tbb}\in\Hk$, since $\Fwr(\vc{g})$ and $\Fwr(\vc{g})$ are functions of $\omega$, we define functions $\mgr,\mgi:\Omega_{\Tbb}\to \Rbb$ as 
\begin{equation*}
\begin{split}\label{eqn:def_mgr_mgi}
\mgr\!(\omega):=\Fwr\!(\vc{g})=\inner{\phiwr}{\vc{g}},
\\
\mgi\!(\omega):=\Fwi\!(\vc{g})=\inner{\phiwi}{\vc{g}},
\end{split}
\end{equation*}
for any $\omega\in\Omega_{\Tbb}$.
We know that $\ddomega \left(g_t\cos(\omega t)\right) = -t g_t \sin(\omega t)$
and
$\ddomega \left(g_t\sin(\omega t)\right) = t g_t\cos(\omega t)$.
From the reproducing property and the Cauchy-Schwartz inequality, we have
\begin{equation}\label{eqn:ddw_gtcoswt_bound}
\begin{split}
\Big|&\ddomega (g_t\cos(\omega t))\Big| 
= 
|t g_t\sin(\omega t)|
\le  
t|g_t| = t|\inner{\kernel_t}{\vcg}|
\\&\le  
t\|\kernel_t\|\|\vcg\|
=
t\inner{\kernel_t}{\kernel_t}^{\frac{1}{2}}\|\vc{g}\|
=
t \kernel(t,t)^{\frac{1}{2}}\|\vc{g}\|.
\end{split}
\end{equation}
Similarly, one can see
\begin{equation}\label{eqn:ddw_gtsinwt_bound}
\begin{split}		
\Big|\ddomega (g_t\sin(\omega t))\Big| 
\le t\ \kernel(t,t)^{\frac{1}{2}}\|\vc{g}\|.
\end{split}
\end{equation}
Due to
$\mu_1<\infty$ and the Weierstrass M-test theorem, we have the uniform and absolute convergence for the series 
$\sum_{t\in\Zbb_+}\ddomega (g_t\cos(\omega t))$
and 
$\sum_{t\in\Zbb_+}\ddomega (g_t\sin(\omega t))$, when $\Tbb=\Zbb_+$, as well as for the improper integrals
$\int_{\Rbb_+}\ddomega (g_t\cos(\omega t))\drm t$
and 
$\int_{\Rbb_+}\ddomega (g_t\sin(\omega t))\drm t$, when $\Tbb=\Rbb_+$.
Consequently, from the definition of $\mgr,\mgi$, it follows that
\begin{equation*}
\begin{split}
&
\ddomega \mgr(\omega) =  
\ddomega \sum_{t\in\Zbb_+}\!g_t\cos(\omega t) 
=
-\!\!
\sum_{t\in\Zbb_+}\!tg_t\sin(\omega t),\\
&
\ddomega \mgi(\omega) = 
\ddomega \sum_{t\in\Zbb_+}\!g_t\sin(\omega t)
=
\sum_{t\in\Zbb_+}\!t g_t\cos(\omega t),
\end{split}
\end{equation*}
when $\Tbb=\Zbb_+$, and 
\begin{equation*}
\begin{split}
&
\ddomega \mgr(\omega) =  
\ddomega  \int_{\Rbb_+}\!\! g_t\cos(\omega t)\drm t 
=-\!
\int_{\Rbb_+}\!\! tg_t\sin(\omega t)\drm t,\\
&
\ddomega \mgi(\omega) = 
\ddomega \int_{\Rbb_+} \!\! g_t\sin(\omega t)\drm t
=
\int_{\Rbb_+}\!\! t g_t\cos(\omega t)\drm t,
\end{split}
\end{equation*}
when $\Tbb=\Rbb_+$. 
Subsequently, from \eqref{eqn:ddw_gtcoswt_bound}, \eqref{eqn:ddw_gtsinwt_bound}, the triangle inequality and the definition of $\mu_1$ in \eqref{eqn:mu_n}, it follows
\begin{equation*}
|\ddomega \mgr(\omega)|\le \mu_1 \|\vc{g}\|, 
\ 
|\ddomega \mgi(\omega)|\le \mu_1 \|\vc{g}\|.
\end{equation*}
From 
$\mg(\omega) = (\mgr(\omega))^2 + (\mgi(\omega))^2$, it follows that $\mg$ is a differentiable function with derivative given as following
\begin{equation}\label{eqn:ddw_mg_expansion}\!\!\!\!
\ddomega\mg(\omega)
\!=\!
2\Fwr(\vc{g})\ddomega\mgr(\omega)
\!+\!
2\Fwi(\vc{g})\ddomega\mgi(\omega).
\end{equation}
Due to Lemma \ref{lem:Fw}, we know that
\begin{equation*}
\begin{split}
|\Fwr(\vc{g})|&
\le     
\|\Fwr\|_{\Lcal(\Hk,\Rbb)}\|\vc{g}\|
\le 
\mu_0 \|\vc{g}\|,  \\
|\Fwi(\vc{g})|
&
\le     
\|\Fwi\|_{\Lcal(\Hk,\Rbb)}\|\vc{g}\|
\le \mu_0
\|\vc{g}\|.  
\end{split}
\end{equation*}
Subsequently, from \eqref{eqn:ddw_mg_expansion}, we have
\begin{equation*}
\begin{split}
|\ddomega&\mg(\omega)|
\!\le\!
2\Big[\!|\Fwr(\vc{g})||\ddomega\mgr\!(\omega)|
\!+\!
|\Fwi(\vc{g})||\ddomega\mgi\!(\omega)|\Big]
\\&
\le 
2\big[
\mu_0\|\vc{g}\|\ \!\mu_1\|\vc{g}\|
+
\mu_0\|\vc{g}\|\ \!\mu_1\|\vc{g}\| 
\big]
= 
4\mu_0\mu_1\|\vc{g}\|^2.
\end{split}
\end{equation*}
Now, let $\omega_1,\omega_2\in [0,\pi]$. 
Without loss of generality, we assume $\omega_2>\omega_1$. 
Consequently, we have
\begin{equation*}
\begin{split}
|\mg(\omega_2)&-\mg(\omega_1)|
 =
|\int_{\omega_1}^{\omega_2}
\ddomega\mg(\omega)  \domega|
\\& \le 
\int_{\omega_1}^{\omega_2}|\ddomega\mg(\omega)|\domega
\le
4\mu_0\mu_1\|\vc{g}\|^2
|\omega_2-\omega_1|.
\end{split}
\end{equation*}
In other words, we have $|\mg(\omega_2)-\mg(\omega_1)|\le L_{\vc{g}}|\omega_2-\omega_1|$
where 
$L_{\vc{g}}=4\mu_0\mu_1\|\vc{g}\|^2$.
\qed
\subsection{Proof of Theorem~\ref{thm:TC_assumption}}
\label{sec:appendix_proof_TC_assumption}
We know that $\max(s,t)=\frac12\big(s+t+|s-t|\big)$. Accordingly, using change of variable $\tau=s-t$, it follows that
\begin{equation}\label{eqn:proof_intint_k}
\int_{\Rbb_+}\!\int_{\Rbb_+}\!\!
\kernel(s,t)\expe^{-\Jimage\omega(s-t)}\drm s\drm t
=\!
\int_{\Rbb_+}\!\expe^{-\beta t}\!\!
\int_{-t}^{\infty}\!\!
\expe^{-\beta\tau^+-\Jimage\omega\tau}\drm \tau \drm t,
\end{equation}
where $\tau^+:=\frac12\big(\tau+|\tau|\big)$.
Also,  we have
\begin{equation}\label{eqn:proof_inner_int}
\begin{split}
&\!\!\int_{-t}^{\infty}
\expe^{-\beta\tau^+-\Jimage\omega\tau}\drm \tau
=
\int_{-t}^{0}\!
\expe^{-\Jimage\omega\tau}\drm \tau
+
\int_{0}^{\infty}\!
\expe^{-\beta\tau-\Jimage\omega\tau}\drm \tau
\\&=
\Big[-\frac{1}{\Jimage \omega}
+\frac{\expe^{\Jimage \omega t}}{\Jimage \omega}
\Big]
+\frac{1}{\beta +\Jimage \omega}
=
\frac{\expe^{\Jimage \omega t}}{\Jimage \omega}
-\frac{\beta}
{\Jimage\omega(\beta +\Jimage \omega)}.
\end{split}		
\end{equation}
Subsequently, by replacing the right-hand side of \eqref{eqn:proof_inner_int} in  \eqref{eqn:proof_intint_k} and simplifying the integrals, we obtain
\begin{equation*}\label{eqn:proof_intint_k_2}
\begin{split}
\int_{\Rbb_+}&\int_{\Rbb_+}
\kernel(s,t)\expe^{-\Jimage\omega(s-t)}\drm s\drm t
\\&=-\frac{1}{\Jimage\omega(-\beta +\Jimage \omega)}
-\frac{1}{\Jimage\omega(\beta +\Jimage \omega)}
=\frac{2}{\beta^2 +\omega^2}.
\end{split}
\end{equation*}
This concludes the proof.
\qed
\subsection{Proof of Theorem~\ref{thm:tightness}}
\label{sec:appendix_proof_tightness}
Let $\Vk:=\Rbb^{\nD}\times\Hk$ be the Hilbert space endowed with the inner product defined as
\begin{equation}
	\inner{(\vc{x}_1,\vc{g}_1)}{(\vc{x}_2,\vc{g}_2)}_{\Vk}
	:=
	\vc{x}_1^{\tr}
	\vc{x}_2
	+
	\lambda
	\inner{\vc{g}_1}{\vc{g}_2}_{\Hk},
\end{equation}
for all $(\vc{x}_1,\vc{g}_1)$ and $(\vc{x}_2,\vc{g}_2)$  
in $\Vk$.
Given $\Omega\subseteq \Omega_{\Tbb}$ and $\rho\in \Rbb_+$, define set $\Uscr_{\kernel}(\rho,\Omega)\subseteq \Vk $ as 
\begin{equation*}
\Uscr_{\kernel}(\rho,\Omega) :=
\Big\{(\vc{x},\vc{g})\Big| 
\Lu{}(\vc{g})-\vc{x} = \vc{y},
|\Fw(\vc{g})|\le \rho, \forall \omega \in \Omega\Big\}, 
\end{equation*}
where $\vcy:=[y_t]_{t\in\Tscr}\in\Rbb^{\nD}$.
Let 
$\Uscr_{\epsilon}:=\Uscr_{\kernel}(1-\epsilon,[0,\pi])$
and $\Uscr_{\Pscr}:=\Uscr_{\kernel}(1-\epsilon,\Pscr)$.
Accordingly, replacing $\eta$ with $\rho$, \eqref{eqn:opt_3} is equivalent to the following optimization problem
\begin{equation}\label{eqn:opt_general_V}
	\minOp_{(\vc{x},\vc{g})\in\Uscr_{\kernel}(\rho,\Omega)}
	\|(\vc{x},\vc{g})\|_{\Vk}^2,
\end{equation}
and therefore, it is a convex optimization with the same unique solution.
Let $\veps:=(\xeps, \geps)$ be the solution for \eqref{eqn:opt_general_V} for $\Uscr_{\epsilon}$ for $\epsilon\in[0,1)$. When $\epsilon =0$, we simply write 
$\vstar:= (\xstar,\gstar)$.
Also, let $\vstareps:=(\xstareps,\gstareps)$ is the solution of \eqref{eqn:opt_general_V}
for $\Uscr_{\Pscr}$.
These notations is consistent with our previous ones due to the equivalency of \eqref{eqn:opt_3} and  \eqref{eqn:opt_general_V} and the uniqueness of the solution.
One can easily see that $\Uscr_{\epsilon}\subseteq\Uscr_{\Pscr}$.
Also, based on the arguments provided in the proof of Theorem \ref{thm:opt_unique_convex_eps}, we know that $\Uscr_{\Pscr}\subseteq\Uscr_0$.
Accordingly, we have
\begin{equation*}
\begin{split}	
	\sup_{\vc{w}\in\Uscr_\Pscr}\min_{\vc{v}\in\Uscr_{\epsilon}} \|\vc{v}-\vc{w}\|_{\Vk}
	&\le
	\sup_{\vc{w}\in\Uscr_0}\min_{\vc{v}\in\Uscr_{\epsilon}} \|\vc{v}-\vc{w}\|_{\Vk},
\\
	\sup_{\vc{w}\in\Uscr_0}	\min_{\vc{v}\in\Uscr_\Pscr} \|\vc{v}-\vc{w}\|_{\Vk}
	&\le
	\sup_{\vc{w}\in\Uscr_0}\min_{\vc{v}\in\Uscr_{\epsilon}} \|\vc{v}-\vc{w}\|_{\Vk}.
\end{split}
\end{equation*}
Let $\vcv_1\in\Vk$ be defined as
\begin{equation}\label{eqn:proj_vstar}
	\vc{v}_1 := \proj_{\Uscr_{\epsilon}}(\vstar) = \argmin_{\vc{v}\in\Uscr_{\epsilon}}
	\|\vc{v}-\vstar\|_{\Vk}^2.
\end{equation}
Since $\vc{v}_1\in \Uscr_{\epsilon}$ and $\Uscr_{\epsilon}\subseteq\Uscr_{0}$, due to the definition of $\veps$, we know that 
$\|\veps\|_{\Vk} \le \|\vc{v}_1\|_{\Vk}$.
Therefore, from triangle inequality and \eqref{eqn:proj_vstar}, we have
\begin{equation*}
	\begin{split}
		\|\veps\|_{\Vk}
		&\le 
		\|\vc{v}_1\|_{\Vk}
		\\&
		\le
		\|\vc{v}_1-\vstar\|_{\Vk} + \|\vstar\|_{\Vk}
		\\&
		=
		\min_{\vc{v}\in\Uscr_{\epsilon}}
		\|\vc{v}-\vstar\|_{\Vk} + \|\vstar\|_{\Vk}
		\\&
		\le 
		\sup_{\vc{w}\in\Uscr_0}\min_{\vc{v}\in\Uscr_{\epsilon}} \|\vc{v}-\vc{w}\|_{\Vk}
		+ 
		\|\vstar\|_{\Vk}.
	\end{split}
\end{equation*}
Consequently, it follows that
\begin{equation*}
	0 
	\le \|\veps\|_{\Vk}-\|\vstar\|_{\Vk}
	\le
	\sup_{\vc{w}\in\Uscr_0}\min_{\vc{v}\in\Uscr_{\epsilon}} \|\vc{v}-\vc{w}\|_{\Vk}.
\end{equation*}
Due to the convexity of $\Uscr_{0}$ and $\Uscr_{\epsilon}\subseteq\Uscr_{0}$, we know that $\frac{1}{2}(\veps+\vstar)\in\Uscr_{0}$,  and therefore, we have 
$\|\vstar\|_{\Vk} \le\frac{1}{2}\|\veps+\vstar\|_{\Vk}$.
Subsequently, one can see that
\begin{equation*}
	\begin{split}
		\|\veps&-\vstar\|_{\Vk}^2
		= 2\|\veps\|_{\Vk}^2+ 2\|\vstar\|_{\Vk}^2 -\|\veps+\vstar\|_{\Vk}^2
		\\&\le
		2\|\veps\|_{\Vk}^2+ 2\|\vstar\|_{\Vk}^2 -4\|\vstar\|_{\Vk}^2 
		\\&= 2\|\veps\|_{\Vk}^2 -2\|\vstar\|_{\Vk}^2 
		\\&=2
		(\|\vstareps\|_{\Vk}-\|\vstar\|_{\Vk})
		(\|\vstareps\|_{\Vk}+\|\vstar\|_{\Vk})
		\\&\le 
		2
		(\sup_{\vc{w}\in\Uscr_0}\min_{\vc{v}\in\Uscr_{\epsilon}} \|\vc{v}-\vc{w}\|_{\Vk})
		(\|\vstareps\|_{\Vk}+\|\vstar\|_{\Vk}).
	\end{split}
\end{equation*}
Since $\zero$ belongs to $\Uscr_{\epsilon}$, $\Uscr_{\Pscr}$ and $\Uscr_{0}$, one has $\|\vcy\|\le\|\geps\|$, 
$\|\vcy\|\le\|\gstareps\|$
and
$\|\vcy\|\le\|\gstar\|$.
Therefore, we have
\begin{equation*}
	\|\veps-\vstar\|_{\Vk}^2
	\le
	4(\sup_{\vc{w}\in\Uscr_0}\min_{\vc{v}\in\Uscr_{\epsilon}} \|\vc{v}-\vc{w}\|_{\Vk})
	\|\vc{y}\|.
\end{equation*}
Similarly, one can show the following inequalities
\begin{align}	
	\|\veps-\vstareps\|_{\Vk}^2
	&\le 
	4
	(\sup_{\vc{w}\in\Uscr_{\Pscr}}\min_{\vc{v}\in\Uscr_{\epsilon}} \|\vc{v}-\vc{w}\|_{\Vk})
	\|\vc{y}\|,
	\\
	\label{eqn:vstareps_vstar_dH}
	\|\vstareps-\vstar\|_{\Vk}^2
	&\le 
	4
	(\sup_{\vc{w}\in\Uscr_0}\min_{\vc{v}\in\Uscr_{\Pscr}} \|\vc{v}-\vc{w}\|_{\Vk})
	\|\vc{y}\|.
\end{align}
Now, let $\vcv = (\vcx,\vcg)$ be an arbitrary element of $\Uscr_0$. Then, we know that $(1-\epsilon)\vcg\in\Gscr(1-\epsilon,\Omega_{\Tbb})$. Also, we have
\begin{equation*}
	\Lu{}((1-\epsilon)\vc{g})-(\vcx-\epsilon\Lu{}(\vc{g}))=
	\Lu{}(\vc{g})-\vcx=\vc{y}.
\end{equation*}
Therefore $\vc{w}:=(\vcx-\epsilon\Lu{}(\vc{g}), (1-\epsilon)\vc{g})$ is an element of $\Uscr_0$. Moreover, we have
\begin{equation*}
		\|\vcv-\vc{w}\|_{\Vk}^2 \!=\! 
		\epsilon(\|\Lu{}(\vc{g})\|^2+\|\vcg\|^2)^{\frac{1}{2}} 
		\le
		\epsilon(\|\Lu{}\|^2+1)^{\frac{1}{2}} \|\vcg\|^2. 
\end{equation*}
Accordingly, one has
\begin{equation}
	\label{eqn:supinfU0Ueps_eps_bound}	
	\begin{split}
		\sup_{\vc{w}\in\Uscr_0}\min_{\vc{v}\in\Uscr_{\epsilon}} \|\vc{v}-\vc{w}\|_{\Vk}
		&\le 
		\sup_{(\vcx,\vcg)\in\Uscr_0} \epsilon(\|\Lu{}\|^2+1)^{\frac{1}{2}} \|\vcg\|^2
		\\&\le 
		\epsilon(\|\Lu{}\|^2+1)^{\frac{1}{2}} \|\vc{y}\|.  
	\end{split}
\end{equation}
Since $\Uscr_{\epsilon} \subseteq \Uscr_{\Pscr} \subseteq \Uscr_{0}$, we know that
\begin{equation*}
	\sup_{\vc{w}\in\Uscr_0}\min_{\vc{v}\in\Uscr_{\Pscr}} \|\vc{v}-\vc{w}\|_{\Vk}
	\le 
	\sup_{\vc{w}\in\Uscr_0}\min_{\vc{v}\in\Uscr_{\epsilon}} \|\vc{v}-\vc{w}\|_{\Vk}.
\end{equation*}
Subsequently, due to \eqref{eqn:vstareps_vstar_dH}
and 
\eqref{eqn:supinfU0Ueps_eps_bound},
we have
\begin{equation}
	\|\vstareps-\vstar\|_{\Vk}^2
	\le 	
	\epsilon(\|\Lu{}\|^2+1)^{\frac{1}{2}} \|\vc{y}\|.  
\end{equation}
Consequently, it follows that
\begin{equation*}
	\begin{split}
		\|\gstareps-\gstar\|^2
		\le 
		\|\vstareps-\vstar\|_{\Vk}^2
		\le 
		4\epsilon(\|\Lu{}\|^2+1)^{\frac{1}{2}}
		\|\vc{y}\|^2.
	\end{split}
\end{equation*}
This also shows that $\lim_{\epsilon \to \zero}\gstareps = \gstar$ in $\Hk$.
Note that, for any $t\in\Tbb$, we have $\gstarepst{t} = \inner{\gstareps}{\kernel_t}_{\Hk}$ 
and
$\gstart{t} = \inner{\gstar}{\kernel_t}_{\Hk}$.
Therefore, from the Cauchy-Schwartz inequality and the reproducing property, one has
\begin{equation*}
		|\gstarepst{t}-\gstart{t}| =
		|\inner{\gstareps-\gstar}{\kernel_t}|
		\le \|\gstareps-\gstar\| \|\kernel_t\|.
\end{equation*} 
Accordingly, from $\|\kernel_t\|=\kernel(t,t)^{\frac12}$, it follows that
\begin{equation*}
	|\gstarepst{t}-\gstart{t}| \le 
	\|\gstareps-\gstar\|  \sup_{t\in\Tbb}
	\kernel(t,t)^{\frac12}.
\end{equation*} 
Since $\sup_{t\in\Tbb}
\kernel(t,t)^{\frac12}\!\!<\!\!\infty$ and 
$\lim_{\epsilon \to \zero}\|\gstareps-\gstar\| =0$,
we have
$\gstarepst{t}\tolimOp^{\epsilon\to 0}\gstart{t}$, uniformly in $\Tbb$, and 
proof concludes.
\qed
\iftrue
\subsection{Setting Up the Optimization Problem for TC Kernel}\label{sec:appendix_setting_up}
Let $\Tbb=\Rbb_+$ and $\kernel(s,t) := \expe^{-\beta\max(s,t)}$, for $s,t\in\Rbb_+$, where  $\beta$ is a positive real scalar.
Accordingly, we define $\varphi_{\omega,t}$ as  $\varphi_{\omega,t}:=\int_{\Rbb_+}\kernel(t,s)\expe^{-\Jimage \omega s}\drm s$, for $\omega,t\in\Rbb_+$.
\begin{lemma}\label{lem:phi_wt}
	For any  $\omega,t\in\Rbb_+$, we have
	\begin{equation}
		\varphi_{\omega,t} 
		= 
		\begin{cases}
			\expe^{-\beta t}\Big[ \frac{1-\expe^{-\Jimage \omega t}}{\Jimage \omega}
			+	\frac{\expe^{-\Jimage \omega t}}{\beta+ \Jimage \omega}\Big], 
			& \text{ if } \omega\ne 0,\\
			\expe^{-\beta t}(t + \frac{1}{\beta}),& \text{ if } \omega=0.\\	
		\end{cases}
	\end{equation}
\end{lemma}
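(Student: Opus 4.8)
The plan is to evaluate the improper integral $\varphi_{\omega,t}=\int_{\Rbb_+}\kernel(t,s)\expe^{-\Jimage\omega s}\drm s$ directly, exploiting the piecewise structure of $\max(s,t)$. First I would split the domain of integration at $s=t$: for $s\in[0,t]$ we have $\max(s,t)=t$, so $\kernel(t,s)=\expe^{-\beta t}$ is constant in $s$, whereas for $s\in[t,\infty)$ we have $\max(s,t)=s$, so $\kernel(t,s)=\expe^{-\beta s}$. This yields
\[
\varphi_{\omega,t}=\expe^{-\beta t}\int_0^t\expe^{-\Jimage\omega s}\drm s+\int_t^\infty\expe^{-(\beta+\Jimage\omega)s}\drm s .
\]
The splitting is legitimate because the integrand is absolutely integrable: its modulus equals $\expe^{-\beta t}$ on $[0,t]$ and $\expe^{-\beta s}$ on $[t,\infty)$, and the latter is integrable since $\beta>0$.

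Next I would treat the two cases separately. For $\omega\ne 0$, elementary antidifferentiation gives $\int_0^t\expe^{-\Jimage\omega s}\drm s=(1-\expe^{-\Jimage\omega t})/(\Jimage\omega)$ for the first piece, and $\int_t^\infty\expe^{-(\beta+\Jimage\omega)s}\drm s=\expe^{-(\beta+\Jimage\omega)t}/(\beta+\Jimage\omega)=\expe^{-\beta t}\expe^{-\Jimage\omega t}/(\beta+\Jimage\omega)$ for the tail, where the vanishing of the upper limit uses that the real part of the exponent is $-\beta<0$. Factoring out $\expe^{-\beta t}$ and adding the two contributions produces exactly the stated expression. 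For $\omega=0$ the first integrand is the constant $1$, giving $\expe^{-\beta t}\,t$, and the tail integral gives $\expe^{-\beta t}/\beta$; summing yields $\expe^{-\beta t}(t+\tfrac{1}{\beta})$.

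There is no substantive obstacle here beyond bookkeeping; the only points deserving a word of justification are the convergence of the tail integral (which relies on $\beta>0$) and the separate treatment of $\omega=0$, where the factor $(1-\expe^{-\Jimage\omega t})/(\Jimage\omega)$ degenerates to a $0/0$ form. As a consistency check one may observe that this factor tends to $t$ as $\omega\to 0$, so the $\omega\ne 0$ formula recovers the $\omega=0$ formula in the limit, confirming that $\varphi_{\omega,t}$ is continuous in $\omega$.
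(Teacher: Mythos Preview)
Your proof is correct and follows essentially the same approach as the paper: split the integral at $s=t$ according to the piecewise definition of $\max(s,t)$, then evaluate each piece by elementary antidifferentiation for the two cases $\omega\ne 0$ and $\omega=0$. Your additional remarks on absolute integrability and the $\omega\to 0$ consistency check are welcome but not present in the paper's terser version.
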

\begin{proof}
	When $\omega \ne 0$, we have
	\begin{equation*}
		\begin{split}
			&\varphi_{\omega,t}  =
			\int_{0}^{\infty}\!\kernel(t,s)\expe^{-\Jimage \omega s}\drm s 
			\\&\!= 
			\int_{0}^{t}\!\expe^{-\beta\max(t,s)}\expe^{-\Jimage \omega s}\drm s 
			+
			\int_{t}^{\infty}\!\expe^{-\beta\max(t,s)}\expe^{-\Jimage \omega s}\drm s 
			\\ &\!=
			\int_{0}^{t}\!\expe^{-\beta t}\expe^{-\Jimage \omega s}\drm s 
			+
			\int_{t}^{\infty}\!\expe^{-\beta s}\expe^{-\Jimage \omega s}\drm s 
			\\ &\!=
			\expe^{-\beta t}\int_{0}^{t}\!\expe^{-\Jimage \omega s}\drm s 
			+
			\int_{t}^{\infty}\!\expe^{-(\beta +\Jimage \omega) s}\drm s 
			\\ &\!=
			\expe^{-\beta t}\frac{1-\expe^{-\Jimage \omega t}}{\Jimage \omega}
			\!+\!
			\frac{\expe^{-(\beta +\Jimage \omega) t}}{\beta+ \Jimage \omega}
			\\&\! 
			=
			\expe^{-\beta t}\bigg[\frac{1-\expe^{-\Jimage \omega t}}{\Jimage \omega}
			\!+\!
			\frac{\expe^{-\Jimage \omega t}}{\beta+ \Jimage \omega}\bigg].
		\end{split}
	\end{equation*}
	The case $\omega = 0$ follows from similar line of arguments.
\end{proof}
\begin{proposition}\label{prop:phir_wt_phii_wt}
	For any $\omega,t\in\Rbb_+$, we have
	\begin{equation}\label{eqn:phir_wt}
		\begin{split}
			&\phir{\omega,t}= 
			\begin{cases}
				\expe^{-\beta t}\ \real{\frac{1-\expe^{-\Jimage \omega t}}{\Jimage \omega}
					+	\frac{\expe^{-\Jimage \omega t}}{\beta+ \Jimage \omega}}, 
				& \text{ if } \omega\ne 0,\\
				\expe^{-\beta t}(t + \frac{1}{\beta}),	
				& \text{ if } \omega=0,\\	
			\end{cases}
			\\&		
			\phii{\omega,t}
			= 
			\begin{cases}
				\expe^{-\beta t}\ \imag{\frac{1-\expe^{-\Jimage \omega t}}{\Jimage \omega}
					+	\frac{\expe^{-\Jimage \omega t}}{\beta+ \Jimage \omega}}, 
				& \text{ if } \omega\ne 0,\\
				0,& \text{ if } \omega=0.\\	
			\end{cases}
		\end{split}
	\end{equation}	
\end{proposition}
\begin{proof}
	From Lemma \ref{lem:Fw} and definition of $\phir{\omega,t}$, we have
	\begin{equation*}
		\begin{split}
			\phir{\omega,t} 
			&= \int_{\Rbb_+}\!\kernel(t,s)\cos(\omega s)\drm s
			\\&
			= \real{\int_{\Rbb_+}\!\kernel(t,s)\expe^{-\Jimage \omega s}\drm s}
			= \real{\varphi_{\omega,t}}.
		\end{split}
	\end{equation*}
	Similarly, one has $\phii{\omega,t} =\imag{\varphi_{\omega,t}}$. Following these, the claim concluded form Lemma \ref{lem:phi_wt}.
\end{proof}
For any $\omega_1,\omega_2\in\Rbb_+$,  define $\zeta_r(\omega_1,\omega_2)$ and $\zeta_i(\omega_1,\omega_2)$  as
\begin{equation}\label{eqn:zeta_r_def}
	\begin{split}
		\zeta_{\text{r}}(\omega_1,\omega_2)&:=
		\int_{\Rbb_+}\phir{\omega_2,t}\ \!\expe^{-\Jimage\omega_1 t}\drm t,
		\\
		\zeta_{\text{i}}(\omega_1,\omega_2)
		&:=
		\int_{\Rbb_+}\phii{\omega_2,t}\ \!\expe^{-\Jimage\omega_1 t}\drm t.
	\end{split}    
\end{equation}
\begin{lemma}\label{lem:zeta_r_zeta_i}
Let $\omega_1,\omega_2\in \Rbb_+$. If $\omega_2\ne 0$, then  we have
\begin{equation}\label{eqn:zeta_r}
\begin{split}
&\zeta_{\text{r}}(\omega_1,\omega_2)=
\frac{\beta}{2}\bigg{[}
\frac{1}{(\omega_2^2-\Jimage \omega_2\beta)(\beta + \Jimage\omega_1 +\Jimage \omega_2)}
\\&\quad  +
\frac{1}{(\omega_2^2+\Jimage \omega_2\beta)(\beta + \Jimage\omega_1 -\Jimage \omega_2)}\bigg{]},\\
\end{split}    
\end{equation}
\begin{equation}\label{eqn:zeta_i}
\begin{split}
&\zeta_{\text{i}}(\omega_1 ,\omega_2)=
\frac{\beta}{2\Jimage}\bigg{[}
\frac{1}{(\omega_2^2-\Jimage \omega_2\beta)(\beta + \Jimage\omega_1 +\Jimage \omega_2)}
\\&\quad  
- 
\frac{1}{(\omega_2^2+\Jimage \omega_2\beta)(\beta + \Jimage\omega_1 -\Jimage \omega_2)}\bigg{]}
-\frac{1}{\omega_2}
\frac{1}{(\beta + \Jimage\omega_1 )}.
\end{split}    
\end{equation}
If $\omega_2= 0$, then $\zeta_{\text{i}}(\omega_1 ,\omega_2)=0$ and
\begin{equation}
\zeta_{\text{r}}(\omega_1 ,\omega_2)=
\frac{2\beta +\Jimage\omega_1}
{\beta(\beta +\Jimage\omega_1)^2}.
\end{equation}
\end{lemma}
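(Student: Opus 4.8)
The plan is to avoid computing the real and imaginary parts appearing in Theorem~\ref{prop:phir_wt_phii_wt} explicitly, and instead to carry the complex quantity $\varphi_{\omega_2,t}$ of Lemma~\ref{lem:phi_wt} together with its complex conjugate. Since $\beta>0$, both $\phir{\omega_2,t}$ and $\phii{\omega_2,t}$ are dominated in modulus by a constant multiple of $\expe^{-\beta t}$, so the defining integrals of $\zeta_{\text{r}}$ and $\zeta_{\text{i}}$ converge absolutely; this justifies writing $\phir{\omega_2,t}=\tfrac12(\varphi_{\omega_2,t}+\overline{\varphi_{\omega_2,t}})$ and $\phii{\omega_2,t}=\tfrac{1}{2\Jimage}(\varphi_{\omega_2,t}-\overline{\varphi_{\omega_2,t}})$, and interchanging the conjugation and linear combination with the integral. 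Everything then reduces to the elementary integral $\int_{\Rbb_+}\expe^{-(\beta+\Jimage a)t}\drm t=(\beta+\Jimage a)^{-1}$, which is valid for any real $a$ precisely because $\beta>0$.

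First I would rewrite $\varphi_{\omega_2,t}$ (for $\omega_2\ne 0$) by collecting the coefficient of $\expe^{-\Jimage\omega_2 t}$, obtaining the compact form
\begin{equation*}
\varphi_{\omega_2,t}=\expe^{-\beta t}\Big[\frac{1}{\Jimage\omega_2}-\frac{\beta\,\expe^{-\Jimage\omega_2 t}}{\Jimage\omega_2(\beta+\Jimage\omega_2)}\Big],
\end{equation*}
whose conjugate is obtained by replacing $\Jimage$ with $-\Jimage$ (recall $\beta,\omega_2,t$ are real). Multiplying by $\expe^{-\Jimage\omega_1 t}$ and integrating over $\Rbb_+$ produces, for $\varphi_{\omega_2,t}$ and $\overline{\varphi_{\omega_2,t}}$ respectively, the two closed forms
\begin{equation*}
\frac{1}{\Jimage\omega_2(\beta+\Jimage\omega_1)}-\frac{\beta}{\Jimage\omega_2(\beta+\Jimage\omega_2)(\beta+\Jimage\omega_1+\Jimage\omega_2)},\quad -\frac{1}{\Jimage\omega_2(\beta+\Jimage\omega_1)}+\frac{\beta}{\Jimage\omega_2(\beta-\Jimage\omega_2)(\beta+\Jimage\omega_1-\Jimage\omega_2)}.
\end{equation*}
Half of their sum gives $\zeta_{\text{r}}$: the two terms $\pm(\Jimage\omega_2(\beta+\Jimage\omega_1))^{-1}$ cancel, leaving exactly \eqref{eqn:zeta_r} once the identities $\Jimage\omega_2(\beta+\Jimage\omega_2)=-(\omega_2^2-\Jimage\omega_2\beta)$ and $\Jimage\omega_2(\beta-\Jimage\omega_2)=\omega_2^2+\Jimage\omega_2\beta$ are applied. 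The quantity $\tfrac{1}{2\Jimage}$ times their difference gives $\zeta_{\text{i}}$: here the two $(\Jimage\omega_2(\beta+\Jimage\omega_1))^{-1}$ terms add up and, using $\Jimage^2=-1$, collapse to the separate summand $-\tfrac{1}{\omega_2}\tfrac{1}{\beta+\Jimage\omega_1}$, while the same two algebraic identities convert the remaining pair into the bracketed difference in \eqref{eqn:zeta_i}.

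Finally, I would dispose of the degenerate case $\omega_2=0$ directly from Theorem~\ref{prop:phir_wt_phii_wt}: since $\phii{0,t}=0$ we get $\zeta_{\text{i}}(\omega_1,0)=0$ immediately, whereas $\phir{0,t}=\expe^{-\beta t}(t+\tfrac1\beta)$ yields $\zeta_{\text{r}}(\omega_1,0)$ through the two integrals $\int_{\Rbb_+}t\,\expe^{-(\beta+\Jimage\omega_1)t}\drm t=(\beta+\Jimage\omega_1)^{-2}$ and $\int_{\Rbb_+}\expe^{-(\beta+\Jimage\omega_1)t}\drm t=(\beta+\Jimage\omega_1)^{-1}$, which combine to $\tfrac{2\beta+\Jimage\omega_1}{\beta(\beta+\Jimage\omega_1)^2}$. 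The computation is entirely elementary; there is no conceptual obstacle, and the only real care needed is the bookkeeping of the cancellations and the consistent use of the two identities relating $\Jimage\omega_2(\beta\pm\Jimage\omega_2)$ to $\omega_2^2\mp\Jimage\omega_2\beta$, which is where sign errors would most easily creep in.
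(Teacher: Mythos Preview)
Your proposal is correct and follows essentially the same route as the paper: both use $\phir{\omega_2,t}=\tfrac12(\varphi_{\omega_2,t}+\overline{\varphi_{\omega_2,t}})$ and $\phii{\omega_2,t}=\tfrac{1}{2\Jimage}(\varphi_{\omega_2,t}-\overline{\varphi_{\omega_2,t}})$, reduce everything to the elementary integrals $\int_{\Rbb_+}\expe^{-(\beta+\Jimage a)t}\drm t$, and then simplify via the identities $\Jimage\omega_2(\beta\pm\Jimage\omega_2)=\mp(\omega_2^2\mp\Jimage\omega_2\beta)$. The only cosmetic difference is that you first integrate $\varphi_{\omega_2,t}\expe^{-\Jimage\omega_1 t}$ and its conjugate separately and then combine, whereas the paper combines under the integral sign before integrating; the cancellations and final expressions are identical.
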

\begin{proof}
Note that for any $z\in\Cbb$, we have $\real{z} = \frac12(z+z^*)$, where $z^*$ denotes the complex conjugate of $z$. Accordingly, when $\omega_2\ne 0$, from \eqref{eqn:phir_wt} and the definition of $\zeta_{\text{r}}$, we have
\begin{equation*}
\begin{split}
&\!\!\zeta_{\text{r}}(\omega_1,\omega_2)\!=\!
\frac{1}{2}\!
\int_{\Rbb_+}\!\!\!
\Bigg{[}\!\bigg{[}
\frac{\expe^{-\beta t}}{\Jimage \omega_2} 
\!+\! 
\big{(}\frac{1}{\beta+ \Jimage \omega_2}\!-\!\frac{1}{\Jimage \omega_2}\big{)}\expe^{-(\beta +\Jimage \omega_2) t}
\bigg{]}
\\&\quad  
+
\bigg{[}\!-\!
\frac{\expe^{-\beta t}}{\Jimage \omega_2} 
\!+\! 
\big{(}\frac{1}{\beta- \Jimage \omega_2}\!+\!\frac{1}{\Jimage \omega_2}\big{)}\expe^{-(\beta 	-\Jimage \omega_2) t}
\bigg{]}\!\Bigg{]}
\expe^{-\Jimage\omega_1 t}\drm t
\\&=
\frac{1}{2}
\int_{\Rbb_+}\!
\bigg{[}
\big{(}\frac{1}{\beta+ \Jimage \omega_2}-\frac{1}{\Jimage \omega_2}\big{)}\expe^{-(\beta + \Jimage\omega_1 +\Jimage \omega_2) t}
\\&\quad + 
\big{(}\frac{1}{\beta- \Jimage \omega_2}+\frac{1}{\Jimage \omega_2}\big{)}\expe^{-(\beta+\Jimage\omega_1 	-\Jimage \omega_2) t}\bigg{]}
\drm t.
\end{split}    
\end{equation*}
Accordingly, we have
\begin{equation*}
\begin{split}
\zeta_{\text{r}}(\omega_1,\omega_2)=&
\frac{1}{2}
\big{(}\frac{1}{\beta+ \Jimage \omega_2}-\frac{1}{\Jimage \omega_2}\big{)}
\frac{1}{\beta + \Jimage\omega_1 +\Jimage \omega_2}
\\&\quad  
+
\frac{1}{2}\big{(}\frac{1}{\beta- \Jimage \omega_2}+\frac{1}{\Jimage \omega_2}\big{)}
\frac{1}{\beta + \Jimage\omega_1 -\Jimage \omega_2}.
\end{split}    
\end{equation*}
Subsequently, we have \eqref{eqn:zeta_r}. 
The claim for the other cases can be proved based on similar line of arguments.
\end{proof}
\begin{proposition}\label{prop:inner_products_phi_rr_ri_ir_ii}
For any $\omega_1,\omega_2\in\Rbb_+$, we have
\begin{equation*}
\begin{split}
&\inner{\phiwrone\!}{\!\phiwrtwo} \!=\! \real{\zeta_{\text{r}}(\omega_1\!,\omega_2)},
\ 
\inner{\phiwrone\!}{\!\phiwitwo}  \!=\! \real{\zeta_{\text{i}}(\omega_1\!,\omega_2)},
\\
&\inner{\phiwione\!}{\!\phiwrtwo} \!=\! \imag{\zeta_{\text{r}}(\omega_1\!,\omega_2)},
\
\inner{\phiwione\!}{\!\phiwitwo}  \!=\! \imag{\zeta_{\text{i}}(\omega_1\!,\omega_2)}. 
\end{split}
\end{equation*}
\end{proposition}
\begin{proof}
	This is a straightforward result from \eqref{eqn:Fwr_Fwi_inner_product}, \eqref{eqn:def_CT_Fw}, and the definition of $\zeta_r$ and $\zeta_i$. 
\end{proof}
\begin{proposition}\label{prop:zu}
Define $z_{\vc{u}}(\omega,\tau)$ as
\begin{equation}\label{eqn:zu}
\begin{split}		
\!\!\!\!\!\!
&z_{\vc{u}}(\omega,\tau)
\!:=\! 
\begin{cases}
\int_{\Rbb_+}\!\!
\big{(}\frac{\expe^{-\beta t}}{\Jimage \omega}
+
\frac{\beta\expe^{-(\beta+\Jimage\omega) t}}{\omega^2- \Jimage \omega\beta}
\big{)}
u_{\tau-t}\drm t,		\!\!\!\!\!\!				
& \text{ if } \omega\!\ne\! 0,\\
\int_{\Rbb_+}\!\!\expe^{-\beta t}(t +\frac{1}{\beta})  u_{\tau-t}\drm t,
\!\!\!\!\!\!
& \text{ if } \omega \!=\! 0.\\			
\end{cases}
\end{split}
\end{equation}
for $\omega,\tau\in\Rbb_+$.
Then, we have 
\begin{align}
&\inner{\phiwr}{\phiu{\tau}} =\real{z_{\vc{u}}(\beta,\omega,\tau)}, 
\label{eqn:inner_product_phir_w_phiu_tau}
\\&
\inner{\phiwi}{\phiu{\tau}} =\imag{z_{\vc{u}}(\beta,\omega,\tau)}.    	
\label{eqn:inner_product_phii_w_phiu_tau}
\end{align}
\end{proposition}
\begin{proof}
When $\omega\ne 0$, from \eqref{eqn:phir_wt}, \eqref{eqn:Lut_R} and Lemma \ref{lem:Lui_bounded}, we have
\begin{equation*}
\begin{split}
&\inner{\phiwr}{\phiu{\tau}}
= 
\int_{0}^{\infty}\!\phiwrk{t} u_{\tau-t}\drm t
\\&= 
\int_{0}^{\infty}\! \expe^{-\beta t}\ \real{\frac{1-\expe^{-\Jimage \omega t}}{\Jimage \omega}
+	\frac{\expe^{-\Jimage \omega t}}{\beta+ \Jimage \omega}} u_{\tau-t}\drm t
\\&= 
\real{\int_{0}^{\infty}\! 
\big{(}\frac{1}{\Jimage \omega}\expe^{-\beta t}
+
\frac{\beta}{\omega^2- \Jimage \omega\beta}
\expe^{-(\beta+\Jimage\omega) t}\big{)} u_{\tau-t}\drm t}.
\end{split}
\end{equation*}
Similarly, when $\omega=0$, one has
\begin{equation*}
\inner{\phiwr}{\phiu{\tau}} 
\!=\! 
\int_{\Rbb_+}\!\!\phiwrk{t} u_{\tau-t}\drm t
\!=\! 
\int_{\Rbb_+}\!\!\expe^{-\beta t}(t +\frac{1}{\beta})  u_{\tau-t}\drm t. 
\end{equation*}
The other equality can be shown in a similar way.
\end{proof}
\begin{remark}
	When $\Tbb=\Zbb_+$, using similar lines of arguments, one can introduce  results analogous to Lemma \ref{lem:phi_wt}, Proposition \ref{prop:phir_wt_phii_wt}, Lemma \ref{lem:zeta_r_zeta_i}, Proposition \ref{prop:inner_products_phi_rr_ri_ir_ii} and  Proposition \ref{prop:zu}.
\end{remark}
\begin{remark}
	For the case of $\Tbb=\Zbb_+$, for  $z_{\vc{u}}(\omega,\tau)$,
	we have a finite summation in a similar form to \eqref{eqn:zu}, when the system is initially at rest, that is $u_t=0$, for $t<0$.
\end{remark}
The set of piecewise constant functions are dense in $\Lscr_p$, i.e., one can approximate precisely almost any signal of interest using a piecewise constant signal.
Motivated by this fact, let assume the input signal $\vc{u} = (u_t)_{t\in\Rbb_+}$ is given as a piecewise constant function defined as  
\begin{equation}\label{eqn:u_pw}
	u_t=\sum_{i=0}^{\nS-1} \xi_{i+1} \one_{[s_i,s_{i+1})}(t), \quad \forall t\in\Rbb_+,
\end{equation}
where $\nS\in\Nbb$, $\xi_i\in\Rbb$, for $i=0,\ldots,\nS$, 
and $(s_0,s_1,\ldots,s_{\nS})$ is a finite increasing sequence in $\Rbb_+$ such that $s_0=0$.
\begin{proposition}
Let $\tau \in\Rbb_+$. Then, if $\omega\ne 0$, we have
\begin{equation}\label{eqn:zu_w_tau_wnz}
\begin{split}
z_{\vc{u}}&(\omega,\tau) = 
\sum_{i=0}^{\nS-1} \xi_{i+1}\bigg[
\frac{\expe^{-\beta\bar{s}_{i+1}(\tau)}-\expe^{-\beta\bar{s}_{i}(\tau)}}{\Jimage \omega\beta}
\\& \quad +
\frac{\beta\expe^{-(\beta+\Jimage\omega) \bar{s}_{i+1}(\tau)}-\beta\expe^{-(\beta+\Jimage\omega) \bar{s}_{i}(\tau)}}{(\omega^2- \Jimage \omega\beta)(\beta+\Jimage\omega)}
\bigg].
\end{split}
\end{equation}
and, if $\omega = 0$, one has
\begin{equation}\label{eqn:zu_w_tau_wz}
\begin{split}
&z_{\vc{u}}(\omega,\tau) 
= 
\sum_{i=0}^{\nS-1} 
\frac{\xi_{i+1}}{\beta^2}
\bigg[(\beta\bar{s}_{i+1}(\tau)\!+\!2)\expe^{-\beta\bar{s}_{i+1}(\tau)}
\\&\qquad -
(\beta\bar{s}_{i}(\tau)\!+\!2)\expe^{-\beta\bar{s}_{i}(\tau)}\bigg],
\end{split}
\end{equation}
where $\bar{s}_{i}(\tau):=\max(\tau-s_i,0)$, for $i=0,\ldots,\nS$.
\end{proposition}	
\begin{proof}
From \eqref{eqn:u_pw} and \eqref{eqn:zu}, if $\omega\ne 0$, we have
\begin{equation*}
\begin{split}
z_{\vc{u}}(\omega,\tau) 
\! &=\!\!\!\int_{\Rbb_+}\!\!\!\Big[\frac{\expe^{-\beta t}}{\Jimage \omega}
\!+\!
\frac{\beta\expe^{-(\beta+\Jimage\omega) t}}{\omega^2- \Jimage \omega\beta}
\Big]\!\!
\sum_{i=0}^{\nS-1}\! \xi_{i+1} \one_{[s_i,s_{i+1})}(\tau\!-t)\drm t
\\& = 
\sum_{i=0}^{\nS-1} \xi_{i+1}
\int_{\bar{s}_{i+1}(\tau)}^{\bar{s}_{i}(\tau)}\! \frac{\expe^{-\beta t}}{\Jimage \omega} \drm t
+
\int_{\bar{s}_{i+1}(\tau)}^{\bar{s}_{i}(\tau)}\!
\frac{\beta \expe^{-(\beta+\Jimage\omega) t}}{\omega^2- \Jimage \omega\beta} \drm t,
\end{split}
\end{equation*}
which concludes \eqref{eqn:zu_w_tau_wnz} when we replace the integrals with their closed-form values. Similarly, one can show \eqref{eqn:zu_w_tau_wz}. 
\end{proof}
Note that using \eqref{eqn:zu_w_tau_wnz} and \eqref{eqn:zu_w_tau_wz}, the inner products in Proposition~\ref{prop:zu} can be obtained as a finite sum rather than an improper integral.

Define $\psi:\Rbb_+\times\Rbb_+\times\Rbb_+\to\Rbb$ as 
\begin{equation}\label{eqn:psi_def}
	\begin{split}
		&\psi(t,a,b):=
		\frac{1}{\beta}(\expe^{-\beta \max(\min(t,b),a)}-\expe^{-\beta b})
		\\&\qquad\qquad\qquad
		+
		\bigg[\max(\min(t,b),a)-a\bigg]\expe^{-\beta t}. 
	\end{split}
\end{equation} 
for any $a,b,t\in\Rbb_+$. 
\begin{lemma}\label{lem:psi}
	For any $a,b\in\Rbb_+$ such that $a\le b$, we have
	\begin{equation}
	\psi(t,a,b) = \int_{a}^{b}\!\expe^{-\beta\max(t,s)}\drm s. 
	\end{equation}	
\end{lemma}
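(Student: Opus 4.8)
The plan is to evaluate the integral $\int_{a}^{b} \expe^{-\beta\max(t,s)}\drm s$ directly and match it against the closed form defining $\psi$. The key observation is that the quantity $c:=\max(\min(t,b),a)$ appearing in \eqref{eqn:psi_def} is exactly the clamp of $t$ to the interval $[a,b]$: one has $c=a$ when $t\le a$, $c=t$ when $a\le t\le b$, and $c=b$ when $t\ge b$. With this in hand, the proof reduces to a three-case analysis on the location of $t$ relative to $[a,b]$, using the elementary fact that $\max(t,s)=s$ when $s\ge t$ and $\max(t,s)=t$ when $s\le t$.

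First I would treat the case $t\le a$. Then $s\ge a\ge t$ for every $s\in[a,b]$, so $\max(t,s)=s$ throughout and the integral is $\int_{a}^{b}\expe^{-\beta s}\drm s=\frac1\beta(\expe^{-\beta a}-\expe^{-\beta b})$. Since here $c=a$, the bracketed factor $[\,c-a\,]$ multiplying $\expe^{-\beta t}$ in \eqref{eqn:psi_def} vanishes and $\psi(t,a,b)=\frac1\beta(\expe^{-\beta a}-\expe^{-\beta b})$, so the two agree. Symmetrically, in the case $t\ge b$ one has $s\le b\le t$, hence $\max(t,s)=t$ and the integral equals $(b-a)\expe^{-\beta t}$; now $c=b$, so the first term $\frac1\beta(\expe^{-\beta c}-\expe^{-\beta b})$ of $\psi$ vanishes, leaving $(b-a)\expe^{-\beta t}$, which again matches.

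The remaining case $a\le t\le b$ is the only one requiring a split of the domain. Writing $\int_{a}^{b}=\int_{a}^{t}+\int_{t}^{b}$ and using $\max(t,s)=t$ on $[a,t]$ and $\max(t,s)=s$ on $[t,b]$, I obtain $(t-a)\expe^{-\beta t}+\frac1\beta(\expe^{-\beta t}-\expe^{-\beta b})$. Identifying $c=t$ in this regime, this is precisely $\frac1\beta(\expe^{-\beta c}-\expe^{-\beta b})+[\,c-a\,]\expe^{-\beta t}=\psi(t,a,b)$, completing the verification.

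There is essentially no deep obstacle here; the statement is an elementary integral identity. The only points demanding care are bookkeeping in nature: confirming that the piecewise description of the clamp $c$ is correct and that the three cases glue consistently at the boundaries $t=a$ and $t=b$ (they do, since $c$ is continuous in $t$ and each branch agrees there), and assembling the elementary antiderivatives without sign or factor errors.
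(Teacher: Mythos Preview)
Your proof is correct and follows essentially the same approach as the paper: a three-case analysis on the position of $t$ relative to $[a,b]$, evaluating the integral in each case and matching it against the closed form \eqref{eqn:psi_def}. In fact, you provide more explicit detail than the paper's own proof, which simply states that the identity follows by considering the cases $t<a$, $t\in[a,b]$, and $t>b$ and comparing the resulting values.
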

\begin{proof}
	Considering three cases of $t<a$, $t>b$ and $t\in[a,b]$ and then evaluating  the integral $\int_{a}^{b}\!\expe^{-\beta\max(t,s)}\drm s$, one can obtain \eqref{eqn:psi_def} by comparing the resulting values.  
\end{proof}
\begin{proposition}
	For any $t\in\Rbb_+$, we have
	\begin{equation}
		\phiu{\tau,t} =
		\sum_{i=0}^{\nS-1} \xi_{i+1} \psi(t,\bar{s}_{i+1}(\tau),\bar{s}_{i}(\tau)).
	\end{equation}
\end{proposition}
\begin{proof}
	From \eqref{eqn:u_pw} and Lemma \ref{lem:Lui_bounded}, we have
	\begin{equation}
		\begin{split}
			\phiu{\tau,t} 
			&=\inner{\phiu{\tau}}{\kernel_t}_{\Hk} 
			\\&= 
			\int_{\Rbb_+}\kernel(t,s)u(\tau-s)\drm s
			\\
			&= 
			\int_{\Rbb_+}\kernel(t,s)\sum_{i=0}^{\nS-1} \xi_{i+1} \one_{[s_i,s_{i+1})}(\tau-s)\drm s
			\\&= 
			\sum_{i=0}^{\nS-1} \xi_{i+1} \int_{\bar{s}_{i+1}(\tau)}^{\bar{s}_{i}(\tau)}\!\expe^{-\beta\max(t,s)}\drm s.
		\end{split}
	\end{equation}	
	Subsequently, the claim follows from Lemma \ref{lem:psi}.
\end{proof}	
\begin{lemma}\label{lem:nu}
	Define function 
	$\nu:\Rbb_+\times\Rbb_+\to\Rbb$ as 
	\begin{equation}\label{eqn:nu_def}
		\nu(x,y) := 
		\int_{0}^{x}\!
		\int_{0}^{y}\! 
		\kernel(s,t)
		\drm t \ \! \drm s.
	\end{equation}
	Then, we have
	\begin{equation}\label{eqn:nu_closed}
		\nu(x,y)=
		\frac{1}{\beta^2}\Big[2-2\expe^{-\beta \min(x,y)}
		-\beta\min(x,y)(\expe^{-\beta x}+\expe^{-\beta y})\Big].
	\end{equation}
\end{lemma}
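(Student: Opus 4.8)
The plan is to reduce to a single ordering of $x$ and $y$ by symmetry and then evaluate the resulting one-dimensional integrals directly. Both sides of \eqref{eqn:nu_closed} are symmetric in $x$ and $y$: for the right-hand side this is immediate since $\min(x,y)$ and $\expe^{-\beta x}+\expe^{-\beta y}$ are symmetric, and for $\nu$ itself, since $\kernel(s,t)=\expe^{-\beta\max(s,t)}$ is bounded and nonnegative on the bounded rectangle $[0,x]\times[0,y]$, Fubini's theorem applies and, together with $\kernel(s,t)=\kernel(t,s)$, yields $\nu(x,y)=\nu(y,x)$. Hence it suffices to prove \eqref{eqn:nu_closed} under the assumption $x\le y$, in which case $\min(x,y)=x$, and the case $x>y$ then follows from this symmetry.

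Next, for fixed $s\in[0,x]$ (so that $s\le y$), I would split the inner integral at $t=s$ according to the definition of the maximum:
\begin{equation*}
\int_0^y \expe^{-\beta\max(s,t)}\drm t
= \int_0^s \expe^{-\beta s}\drm t + \int_s^y \expe^{-\beta t}\drm t
= s\,\expe^{-\beta s} + \tfrac{1}{\beta}\big(\expe^{-\beta s}-\expe^{-\beta y}\big).
\end{equation*}

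Then I would integrate this expression over $s\in[0,x]$. The only nonelementary piece is $\int_0^x s\,\expe^{-\beta s}\drm s$, which I would evaluate by integration by parts to obtain $\tfrac{1}{\beta^2}-\big(\tfrac{x}{\beta}+\tfrac{1}{\beta^2}\big)\expe^{-\beta x}$; the remaining two contributions are elementary exponential integrals. Collecting all terms gives
\begin{equation*}
\nu(x,y)=\frac{1}{\beta^2}\Big[2-2\expe^{-\beta x}-\beta x\big(\expe^{-\beta x}+\expe^{-\beta y}\big)\Big],
\end{equation*}
which is exactly \eqref{eqn:nu_closed} once we recall $\min(x,y)=x$.

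The computation is entirely elementary, so there is no deep obstacle; the only points requiring care are the symmetry reduction, which both lets a single ordering suffice and correctly produces the $\min(x,y)$ appearing in the closed form, and the placement of the split point $t=s$ relative to the upper limit $y$, which is precisely what the assumption $x\le y$ guarantees.
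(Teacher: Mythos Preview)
Your proposal is correct and follows essentially the same approach as the paper: reduce by symmetry to the case $x\le y$, split the inner integral at $t=s$, and then integrate the resulting expression in $s$ using integration by parts for $\int_0^x s\,\expe^{-\beta s}\drm s$. Your writeup is in fact slightly more careful than the paper's, explicitly justifying the symmetry reduction and the validity of the split point.
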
	
\begin{proof}
	Since $\kernel(s,t)=\kernel(t,s)$, without loss of generality, let assume $x\le y$. 
	Then, we have
	\begin{equation*}
		\begin{split}
			\nu(x,&y) = 
			\int_{0}^{x}\!
			\int_{0}^{y}\! 
			\kernel(s,t)
			\drm t \ \! \drm s
			\\&=
			\int_{0}^{x}\!
			\int_{0}^{y}\! 
			\expe^{-\beta \max(s,t)}
			\drm t \ \! \drm s
			\\&=
			\int_{0}^{x}\!
			\int_{0}^{s}\! 
			\expe^{-\beta \max(s,t)}
			\drm t \ \! \drm s
			+
			\int_{0}^{x}\!
			\int_{s}^{y}\! 
			\expe^{-\beta \max(s,t)}
			\drm t \ \! \drm s
			\\&=
			\int_{0}^{x}\!
			\int_{0}^{s}\! 
			\expe^{-\beta s}
			\drm t \ \! \drm s
			+
			\int_{0}^{x}\!
			\int_{s}^{y}\! 
			\expe^{-\beta t}
			\drm t \ \! \drm s
			\\&=
			\int_{0}^{x}\!
			s\expe^{-\beta s} \drm s
			+
			\int_{0}^{x}\!
			\frac{1}{\beta}(\expe^{-\beta s}-\expe^{-\beta y}) \drm s
			\\&=
			\int_{0}^{x}\!
			s\expe^{-\beta s}\drm s
			+
			\frac{1}{\beta^2}(1-\expe^{-\beta x}) -\frac{x}{\beta}\expe^{-\beta y}
			\int_{0}^{x}\!
			s\expe^{-\beta s}\drm s
			\\&=
			\frac{1}{\beta^2}-\frac{1}{\beta}(x+\frac{1}{\beta})\expe^{-\beta x}
			+
			\frac{1}{\beta^2}(1-\expe^{-\beta x}) -\frac{x}{\beta}\expe^{-\beta y}
			\\&=
			\frac{1}{\beta^2}\bigg{(}2-2\expe^{-\beta x}
			-x\beta(\expe^{-\beta x}+\expe^{-\beta y})\bigg{)}.
		\end{split}	
	\end{equation*}
	Accordingly, since $x=\min(x,y)$ and $y=\max(x,y)$, one has
	\eqref{eqn:nu_closed}.
\end{proof}	
For each $i,j \in \{0,1,\ldots,\nS-1\}$, define function $\kappa_{ij}:\Rbb_+\times\Rbb_+\to\Rbb_+$ such that, for any $\tau_1,\tau_2\in\Rbb_+$, we have  
\begin{equation}
	\begin{split}
		&\kappa_{ij}(\tau_1,\tau_2) = 
		\nu(\bar{s}_{i}(\tau_1),\bar{s}_{j}(\tau_2)) 
		-
		\nu(\bar{s}_{i+1}(\tau_1),\bar{s}_{j}(\tau_2))
		\\&\quad-
		\nu(\bar{s}_{i}(\tau_1),\bar{s}_{j+1}(\tau_2)) 
		+
		\nu(\bar{s}_{i+1}(\tau_1),\bar{s}_{j+1}(\tau_2)).
	\end{split}
\end{equation}
\begin{proposition}
For any $\tau_1,\tau_2\in\Rbb_+$, we have
\begin{equation}\label{eqn:inner_phiu_tau1_phii_tau2}
\begin{split}
&\inner{\phiu{\tau_1}}{\phiu{\tau_2}}
= 
\sum_{i=0}^{\nS-1}\sum_{j=0}^{\nS-1}
\xi_{i+1}\xi_{j+1} \kappa_{ij}(\tau_1,\tau_2).
\end{split}
\end{equation}
\end{proposition}
\begin{proof}
Due to \eqref{eqn:nu_def}, we have
\begin{equation}\label{eqn:intintk11}
\begin{split}
&\!\!\!\int_{0}^{\infty}\!\!\!\int_{0}^{\infty}\!\!  
\kernel(s,t)
\one_{[s_j,s_{j+1})}(\tau_2-t)
\one_{[s_i,s_{i+1})}(\tau_1-s)\drm t \ \! \drm s 	
\\&=
\int_{\bar{s}_{i+1}(\tau_1)}^{\bar{s}_{i}(\tau_1)}\!
\int_{\bar{s}_{j+1}(\tau_2)}^{\bar{s}_{j}(\tau_2)}\! 
\kernel(s,t)
\drm t \ \! \drm s
\\&=
\nu(\bar{s}_{i}(\tau_1),\bar{s}_{j}(\tau_2)) -
\nu(\bar{s}_{i+1}(\tau_1),\bar{s}_{j}(\tau_2))
\\&\quad
-\nu(\bar{s}_{i}(\tau_1),\bar{s}_{j+1}(\tau_2)) +
\nu(\bar{s}_{i+1}(\tau_1),\bar{s}_{j+1}(\tau_2)).
\end{split}
\end{equation}
From other hand, due to Lemma \ref{lem:Lui_bounded} and \eqref{eqn:u_pw}, we have
\begin{equation*}
\begin{split}
&\inner{\phiu{\tau_1}}{\phiu{\tau_2}}
=
\int_{0}^{\infty}\phiu{\tau_2,s}
\sum_{i=0}^{\nS-1} \xi_{i+1} \one_{[s_i,s_{i+1})}(\tau_1-s)\drm s 
\\&=
\int_{0}^{\infty}\int_{0}^{\infty}
\kernel(s,t)
\sum_{j=0}^{\nS-1} \xi_{j+1} \one_{[s_j,s_{j+1})}(\tau_2-t)
\\&\qquad\qquad
\sum_{i=0}^{\nS-1} \xi_{i+1} \one_{[s_i,s_{i+1})}(\tau_1-s)\drm t \ \! \drm s
\\&=
\sum_{i=0}^{\nS-1}\sum_{j=0}^{\nS-1} 
\xi_{i+1}\xi_{j+1}\bigg[
\int_{0}^{\infty}\int_{0}^{\infty}
\kernel(s,t)
\\&\qquad\qquad
\one_{[s_j,s_{j+1})}(\tau_2-t)
\one_{[s_i,s_{i+1})}(\tau_1-s)\drm t \ \! \drm s\bigg].
\end{split}	
\end{equation*}
Accordingly, one can conclude \eqref{eqn:inner_phiu_tau1_phii_tau2} due to \eqref{eqn:intintk11}.
\end{proof}	
\begin{proposition}
	Let $\Tbb=\Zbb_+$ and the system be initially at rest, i.e., we have $u_t=0$, for $t<0$.
	Also, let assume that $t_i=i$, for $i=0,\ldots,\nD-1$. 
	Define matrix $\mx{K}\in\Rbb^{\nD\times\nD}$  as 
	$\mx{K}:=\begin{bmatrix}
		\kernel(t-1,s-1)	
	\end{bmatrix}_{t=1,s=1}^{\nD,\nD}$ and  Toeplitz matrix $\mx{T}_{\vc{u}}\in\Rbb^{\nD\times\nD}$  as 
	$\mx{T}_{\vc{u}}:=
	\begin{bmatrix}
		u_{t-s}
	\end{bmatrix}_{t=1,s=1}^{\nD,\nD}$. Then, we have
	\begin{equation}
		\Big[\inner{\phiu{i-1}}{\phiu{j-1}}_{\Hk}\Big]_{i=1,j=1}^{\nD,\nD} = \mx{T}_{\vc{u}}\mx{K}\mx{T}_{\vc{u}}^\tr.    
	\end{equation}
\end{proposition}	
\begin{proof}
	The claim is a straightforward result of \eqref{eqn:phi_u_ts} and the definition of matrices  $\mx{K}$ and $\mx{T}_{\vc{u}}$.
\end{proof}
\subsection{Utilizing Barrier Method}
\label{sec:appendix_barrier_QCQP}
Let $p:\Rbb^m\to\Rbb$ and $p_j:\Rbb^m\to\Rbb$, for $j\in\IP$,
be respectively defined as
$p(\vcx):= \frac12\|\mxA\vcx-\vcy\|^2 + \frac{\lambda}{2} \vc{x}^\tr\Phi\vc{x}$ and
$p_j(\vcx):= \frac12\vcx^\tr(\vcb_j\vcb_j^\tr+\vcc_j\vcc_j^\tr)\vcx-\frac12(1-\epsilon)$, for all $\vcx\in\Rbb^m$.
Then, the optimization problem \eqref{eqn:opt_7_finite} can be written as 
\begin{equation}\label{eqn:min_p_Ipj}
	\minOp_{\vcx\in\Rbb^m} \ p(\vcx)+ \sum_{j\in\IP}
	\Ical(p_j(\vcx)),
\end{equation}
where
$\Ical:\Rbb\to\Rbb_+\cup\{+\infty\}$ is defined as
$\Ical(x):=\delta_{(-\infty,0]}(x)$, for all $x\in\Rbb$.
The solution approach utilizing logarithmic barrier function approximates $\Ical$ in \eqref{eqn:min_p_Ipj}  successively and  solves the resulting convex program to obtain a sequence in $\Rbb^m$ converging to the solution of \eqref{eqn:min_p_Ipj}. 
More precisely, for $n\in\Nbb$, let $b_n:\Rbb\to\Rbb_+\cup\{+\infty\}$  be the function defined as $b_n(x):=-\theta_n\ln(-x)$, for $x\in\Rbb$, where $\{\theta_n\}_{n=1}^\infty$ is a decreasing sequence such that $\theta_n\downarrow 0$.
Define function  $f_n:\Rbb\to\Rbb_+\cup\{+\infty\}$ as following
\begin{equation}
	f_n(\vcx) := p(\vcx) - \theta_n \sum_{j\in\IP}
	\ln(-p_j(\vcx)),\quad \forall\vcx\in\Rbb^m,
\end{equation}
for any $n\in\Nbb$. 
The gradient of $f_n$ is
\begin{equation}\label{eqn:Grad_fn}
	\begin{split}
		&\nabla f_n(\vcx) = 
		\mx{A}^\tr\mx{A}\vcx+ \lambda \Phi\vcx + 
		\vc{q} 
		\\& \qquad -
		\theta_n\sum_{j=0}^{\nP}
		\frac{(\vc{b}_j^\tr\vcx) \vc{b}_j+(\vc{c}_j^\tr\vcx)\vc{c}_j}
		{r+\frac{1}{2}(\vc{b}_j^\tr\vcx)^2+\frac{1}{2}
			(\vc{c}_j^\tr\vcx)^2},
	\end{split}
\end{equation}
and, the Hessian of $f_n$ is 
\begin{equation}\label{eqn:Grad2_fn}
	\begin{split}\!\!\!\!\!\!\!\!
		&\!\!\!\nabla^2\! f_n(\vcx) \!=\! 
		\mx{A}^\tr\!\mx{A}\!+\! \lambda \Phi \!-\! 
		\theta_n
		\!\sum_{j=0}^{\nP}
		\frac{\vc{b}_j\vc{b}_j^\tr+\vc{c}_j\vc{c}_j^\tr}
		{r+\frac{1}{2}(\vc{b}_j^\tr\vcx)^2+\frac{1}{2}
			(\vc{c}_j^\tr\vcx)^2}
		\\ &\!\!\!+\! 
		\theta_n
		\!\sum_{j=0}^{\nP}
		\frac{
			\left((\vc{b}_j^\tr\vcx) \vc{b}_j+(\vc{c}_j^\tr\vcx)\vc{c}_j\right)
			\left((\vc{b}_j^\tr\vcx) \vc{b}_j+(\vc{c}_j^\tr\vcx)\vc{c}_j\right)^\tr\!\!
		}
		{(r+\frac{1}{2}(\vc{b}_j^\tr\vcx)^2+\frac{1}{2}
			(\vc{c}_j^\tr\vcx)^2)^2}.
	\end{split}
\end{equation}
Given the gradient and the Hessian of $f_n$ in \eqref{eqn:Grad_fn} and \eqref{eqn:Grad2_fn}, one may employ an iterative optimization scheme such as Newton method or (L-)BFGS method \cite{boyd2004convex} to find $\vcx_n$, the solution of the convex program $\min_{\vcx\in\Rbb^m} f_n(\vcx)$. Note that $\vcx=\zero$ is feasible in \eqref{eqn:opt_7_finite}.
Also, one can see that the feasible set in \eqref{eqn:opt_7_finite} is a convex set with non-empty interior, and consequently, it is guaranteed \cite{boyd2004convex} that $\{\vcx_n\}_{n=1}^\infty$ converges to the solution of \eqref{eqn:min_p_Ipj}.

Given $\Phi\vcx$, we have the value of $\mxA\vcx$ as well as $\vc{b}_j^\tr\vcx$ and $\vc{c}_j^\tr\vcx$, for $j\in\IP$. Accordingly, by obtaining the vector $\Phi\vcx$, one can calculate the value in \eqref{eqn:Grad_fn} efficiently.
Moreover, one can see from \eqref{eqn:Grad2_fn} that the matrices $\mx{A}^\tr\mx{A}+ \lambda \Phi$ and $\vc{b}_j\vc{b}_j^\tr+\vc{c}_j\vc{c}_j^\tr$, for $j\in\IP$,  can be calculated only once in the optimization procedure. Note that since (L-)BFGS only employs the gradients of $f_n$, it is preferred when $\nP$ is large. 
\fi 
\bibliographystyle{IEEEtran}
\bibliography{mainbib}

\end{document}